\newtheorem{lem}{Lemma}
\newtheorem{thm}{Theorem}
\newtheorem{defn}{Definition}
\newtheorem{prop}{Proposition}
\newtheorem{cor}{Corollary}
\newtheorem{oss}{Remark}
\begin{document}

\title{\large\textbf{GLOBAL AND TRAJECTORY ATTRACTORS FOR A
NONLOCAL CAHN-HILLIARD-NAVIER-STOKES SYSTEM}}

\author{
{\sc Sergio Frigeri}\\
Dipartimento di Matematica {\it F. Enriques}
\\Universit\`{a} degli Studi di Milano\\Milano I-20133, Italy\\
\textit{sergio.frigeri@unimi.it}
\\
\\
{\sc Maurizio Grasselli}\\
Dipartimento di Matematica {\it F. Brioschi}\\
Politecnico di Milano\\
Milano I-20133, Italy \\
\textit{maurizio.grasselli@polimi.it}}

\date{}

\maketitle

\begin{abstract}\noindent
The Cahn-Hilliard-Navier-Stokes system is based on a well-known
diffuse interface model and describes the evolution of an
incompressible isothermal mixture of binary fluids. A nonlocal variant
consists of the Navier-Stokes equations suitably coupled with a
nonlocal Cahn-Hilliard equation. The authors, jointly with P.~Colli,
have already proven the existence of a global weak solution to a
nonlocal Cahn\discretionary{-}{-}{-}Hilliard-Navier-Stokes system
subject to no-slip and no-flux boundary conditions.
Uniqueness is still an open issue even in dimension two.
However, in this case, the energy identity holds.
This property is exploited here to define, following J.M.~Ball's approach,
a generalized semiflow which has a global attractor. Through a similar
argument, we can also show the existence of a (connected) global attractor
for the convective nonlocal Cahn-Hilliard equation with a given
velocity field, even in dimension three.
Finally, we demonstrate that any weak solution fulfilling the energy
inequality also satisfies an energy inequality.
This allows us to establish the
existence of the trajectory attractor also in dimension three with
a time dependent external force.
\\ \\
\noindent \textbf{Keywords}: Navier-Stokes equations, nonlocal
Cahn-Hilliard equations, incompressible binary fluids, global
attractors, trajectory attractors.
\\
\\
\textbf{AMS Subject Classification}: 35Q30, 37L30, 45K05, 76T99.\end{abstract}

\section{Introduction}\setcounter{equation}{0}
Diffuse-interface methods in Fluid Mechanics are widely used by many
researchers in order to describe the behavior of complex fluids (see,
e.g., \cite{AMW,D} and references therein). A typical example is a
mixture of two incompressible fluids like, e.g., oil and water. To
describe the evolution of such a system a sufficiently simple model is
the so-called H model (see \cite{HH}, cf. also \cite{GPV,JV,M} and
references therein). This consists in a suitable coupling of the
Navier-Stokes equations for the (average) fluid velocity $u$, with a
Cahn-Hilliard type equation for the order parameter $\varphi$ (i.e.,
the relative concentration of one fluid or the difference of the two
concentrations). Temperature variations are neglected and the density
is supposed to be constant. This kind of system, called
Cahn-Hilliard-Navier-Stokes system, has been analyzed by several
authors both theoretically (see, for instance, \cite{A1,A3, B,
GG1,GG2,GG3,GP,S,ZWH}) and numerically (cf., e.g.,
\cite{BCB,B3,F,KSW,KKL,LS,SY}). Generalizations to unmatched
densities and compressible case have also been investigated (see
\cite{A2,AF,B2}). On the other hand, it is well know that the usual
Cahn-Hilliard equation can be viewed as a local approximation of a
nonlocal Cahn-Hilliard equation (see, for instance,
\cite{BH1,BH2,G,GZ,GL1,GL2,H,LP}). However, the corresponding
nonlocal version of the Cahn-Hilliard-Navier-Stokes system has been
analyzed only recently in \cite{CFG}. Nonetheless it is worth
mentioning that there exist some related works devoted to
liquid-vapor phase transitions (i.e., the so-called
Navier-Stokes-Korteweg systems) in which nonlocal energy
functionals are considered  (see \cite{R,R2}, cf. also \cite{Has}).

More precisely, we want consider the following system (see \cite{CFG} for details)
\begin{align}
& \varphi_t+u\cdot\nabla\varphi=\Delta\mu,\label{sy1}\\
&\mu=a\varphi-J\ast\varphi+F^\prime(\varphi), \label{sy2}\\
&u_t-\mbox{div}(\nu(\varphi)2Du)+(u\cdot\nabla)u+\nabla\pi=\mu\nabla\varphi+h(t),
\label{sy3}\\
&\mbox{div}(u)=0, \label{sy4}
\end{align}
in $\Omega\times (0,\infty)$, where $\Omega \subset
\mathbb{R}^d$, $d=2,3$, is a bounded domain with a sufficiently
smooth boundary and the density has been taken equal to one.
Here $J:\mathbb{R}^d \to \mathbb{R}$ is the
interaction kernel and
\begin{equation}
(J\ast\varphi)(x):=\int_{\Omega}J(x-y)\varphi(y)dy,\qquad a(x):=
\int_{\Omega}J(x-y)dy,\qquad x\in\Omega.
\label{defp}
\end{equation}
We recall that $F$ is the potential accounting for the
presence of two phases, while $\nu>0$ denotes the viscosity, $\pi$ the pressure,
$2Du:=\nabla u+(\nabla u)^{tr}$ and $h$ represents an
external force acting on the mixture.

In \cite{CFG}, jointly with P. Colli, we have proven the existence of a global weak solution
for system \eqref{sy1}-\eqref{sy4} endowed with the following
boundary and initial conditions
\begin{align}
&\frac{\partial\mu}{\partial n}=0,\quad u=0,\quad\mbox{on }
\partial\Omega\times (0,T),\label{nslip}\\
&u(0)=u_0,\quad\varphi(0)=\varphi_0,\quad\mbox{in }\Omega,
\label{sy6}
\end{align}
where $n$ is the unit outward normal to $\partial\Omega$.
This result has been obtained by
assuming that $F$ is sufficiently smooth and of arbitrary polynomial
growth. In addition, we have shown some regularity properties of the
solution provided that $F$ satisfies a reasonable coercivity condition.
In particular, such properties entail the validity of an energy identity in
dimension two. However, even in this case, uniqueness is still an open
issue. This is due to the lack of regularity of the order parameter
$\varphi$ which is a consequence of the presence of the nonlocal term
in place of the usual Laplace operator acting on $\varphi$ (see
\cite{CFG} for details). On the other hand, finding stronger solutions
does not seem straightforward as well. Thus, even in two dimensions,
the analysis of the (global) longtime behavior appears to be rather
challenging. Fortunately, at least in this case, we have an energy
equality so we have already observed that, in the autonomous case,
the existence of a global attractor might be established by using the
notion of generalized semiflow introduced by J.M.~Ball (see
\cite[Rem.~7]{CFG}). This is exactly the first (and main) result of this
contribution. Namely, if $d=2$ and $h$ does not depend on time, we
prove that \eqref{sy1}-\eqref{sy4} with \eqref{nslip}-\eqref{sy6}
defines a generalized semiflow which is point dissipative and
possesses a compact attractor. An interesting consequence is that we
can also prove the existence of a global attractor for the nonlocal
Cahn-Hilliard equation with convection assuming $u\in L^{\infty}(\Omega)^d$ is given and independent of time.
This can be achieved even in
the case $d=3$ with a restriction on the growth of $F$ (still including
the classical smooth double-well potential). The reason is that, for the
Cahn-Hilliard equation alone, the energy equality also holds in three
dimensions. In addition, in this case, we can prove uniqueness so that
we can define a semiflow and the related global attractor is connected.
The last result of this paper is of interest, in particular, for the three dimensional nonautonomous case.
Indeed, we first demonstrate a suitable generalization of an integral form of Gronwall's lemma.
This inequality allows us to show that any weak solution satisfies a dissipative estimate also
in dimension three.
Moreover, we can show that there is a weak solution satisfying the energy estimate for
any initial time on, with some growth restrictions on $F$ if $d=3$.
Using this fact we can establish the existence of the trajectory attractor
following the theory presented in \cite{CV} (cf. \cite{GG2} for the local Cahn-Hilliard-Navier-Stokes system).

The plan of the paper goes as follows. In the next Section \ref{sec2} we
introduce the assumptions and we briefly restate the results obtained in
\cite{CFG}. Then, in Section \ref{sec3}, we proceed to proving the
main result by recalling first some basic notions on generalized
semiflows. The convective nonlocal Cahn-Hilliard equation case is discussed in
Section \ref{sec4}, while the generalized Gronwall lemma and the dissipative estimate
are proven in Section \ref{sec5}. The final Section \ref{sec6} is devoted
to the existence of the trajectory attractor.

\section{Functional setup and known results}\setcounter{equation}{0}
\label{sec2}

For $d=2,3$ we introduce the classical Hilbert
spaces for the Navier-Stokes equations (see, e.g., \cite{T})
$$G_{div}:=\overline{\{u\in C^\infty_0(\Omega)^d:\mbox{ div}(u)=0\}}^{L^2(\Omega)^d},$$
and
$$V_{div}:=\{u\in H_0^1(\Omega)^d:\mbox{ div}(u)=0\}.$$
We also set $H=L^2(\Omega)$, $V=H^1(\Omega)$ and denote by
$\|\cdot\|$ and $(\cdot,\cdot)$ the norm and the scalar product,
respectively, on both $H$ and $G_{div}$. $H$ will also be used for
$L^2$ spaces of vector or matrix valued functions. The notation
$\langle\cdot,\cdot\rangle$  will stand for the duality pairing
between a Banach space and its dual.  $V_{div}$ is endowed with the
scalar product
$$(u,v)_{V_{div}}=(\nabla u,\nabla v),\qquad\forall u,v\in V_{div}.$$
Let us also recall the definition of the Stokes operator  $A:D(A)\cap
G_{div}\to G_{div}$ in the case of no-slip boundary
condition \eqref{nslip}, i.e. $A=-P\Delta$ with domain $D(A)=H^2(\Omega)^d\cap V_{div}$,
where $P:L^2(\Omega)^d\to G_{div}$ is the Leray projector. Notice that we have
$$(Au,v)=(u,v)_{V_{div}}=(\nabla u,\nabla v),\qquad\forall u\in D(A),\quad\forall v\in V_{div}.$$
We also recall that $A^{-1}:G_{div}\to G_{div}$ is a self-adjoint compact operator in $G_{div}$
and by the classical spectral theorems there exists a sequence $\lambda_j$ with $0<\lambda_1\leq\lambda_2\leq\cdots$ and $\lambda_j\to\infty$,
and a family of $w_j\in D(A)$ which is orthonormal in $G_{div}$ and such that $Aw_j=\lambda_jw_j$.
We also define the map $\mathcal{A}:V_{div}\times H\to V_{div}'$ in the following way.
For every $u\in V_{div}$ and every $\varphi\in H$ we set
$$\langle\mathcal{A}(u,\varphi),v\rangle:=(\nu(\varphi)2Du,Dv),\qquad\forall v\in V_{div},$$
where $\nu$ is a continuous function satisfying $\nu_1\leq\nu(s)\leq\nu_2$,
for all $s\in\mathbb{R}$, with $\nu_1,\nu_2>0$. Notice that if $\nu=1$ we have
$$\langle\mathcal{A}(u,\varphi),v\rangle=(2Du,Dv)=(\nabla u,\nabla v),\qquad\forall u, v\in V_{div},$$
and hence in this case we have $\mathcal{A}(u,\varphi)=Au$ for every $u\in D(A)$. Moreover we have
$$\Vert\mathcal{A}(u,\varphi)\Vert_{V_{div}'}\leq\nu_2\Vert u\Vert_{V_{div}},\qquad\forall u\in V_{div},\quad
\forall\varphi\in H.$$
The trilinear form $b$ which appears in the weak formulation of the
Navier-Stokes equations is defined as usual
$$b(u,v,w)=\int_{\Omega}(u\cdot\nabla)v\cdot w,\qquad\forall u,v,w\in V_{div},$$
and the associated bilinear map $\mathcal{B}$ from $V_{div}\times V_{div}$ into $V_{div}'$
as
$$\langle\mathcal{B}(u,v),w\rangle=b(u,v,w),\qquad\forall u,v,w\in V_{div}.$$
We shall need the following standard estimates which hold for all $u\in V_{div}$
\begin{align}
&\|\mathcal{B}(u,u)\|_{V_{div}'}\leq c\|\nabla u\|^{3/2}\|u\|^{1/2},
\qquad d=3,\label{standest3D}\\
&\|\mathcal{B}(u,u)\|_{V_{div}'}\leq c\|u\|\|\nabla u\|,\qquad d=2.\label{standest2D}
\end{align}

The assumptions listed below are the same as in \cite{CFG}. We report
them for the reader's convenience.
 \begin{description}
 \item[(A1)] $J\in W^{1,1}(\mathbb{R}^d),\quad
     J(x)=J(-x),\quad a\geq 0\quad\mbox{a.e. in } \Omega$.
 \item[(A2)]The function $\nu$ is locally Lipschitz on $\mathbb{R}$ and there exist $\nu_1,\nu_2>0$ such that
 $$\nu_1\leq\nu(s)\leq \nu_2,\qquad\forall s\in\mathbb{R}.$$
 \item[(A3)] $F\in C^{2,1}_{loc}(\mathbb{R})$ and there exists $c_0>0$
     such that
             $$F^{\prime\prime}(s)+a(x)\geq c_0,\qquad\forall s\in\mathbb{R},\quad\mbox{a.e. }x\in\Omega.$$
 \item[(A4)] There exist
     $c_1>\frac{1}{2}\|J\|_{L^1(\mathbb{R}^d)}$ and
     $c_2\in\mathbb{R}$ such that
             $$F(s)\geq c_1s^2-c_2,\qquad\forall s\in\mathbb{R}.$$
 \item[(A5)] There exist $c_3>0$, $c_4\geq0$ and $p\in(1,2]$
     such that
             $$|F^\prime(s)|^p\leq c_3|F(s)|+c_4,\qquad
             \forall s\in\mathbb{R}.$$
\end{description}


\begin{oss}
{\upshape
Since $F$ is bounded from below, it is easy to see that (A5) implies
that $F$ has polynomial growth of order $p'$, where $p'\in[2,\infty)$
is the conjugate index to $p$. Namely, there exist $c_5>0$ and
$c_6\geq 0$ such that
\begin{equation}
\label{growth}
|F(s)|\leq c_5|s|^{p'}+c_6,\qquad\forall s\in\mathbb{R}.
\end{equation}
Observe that assumption (A5) is fulfilled by a potential of arbitrary
polynomial growth. For example, (A3)--(A5) are satisfied for the case
of the well-known double-well potential $F(s)=(s^2-1)^2$.}
\label{Fgrowth}
\end{oss}

We also recall the notion of weak solution to
system
\eqref{sy1}-\eqref{sy4} with \eqref{nslip}-\eqref{sy6}.
\begin{defn}
\label{wfdfn} Let $T>0$, $h\in L^2(0,T;V_{div}')$, $u_0\in
G_{div}$, $\varphi_0\in H$ with $F(\varphi_0)\in L^1(\Omega)$ be
given. Then $[u,\varphi]$ is a weak solution to
\eqref{sy1}-\eqref{sy4} on $(0,T)$ satisfying
\eqref{nslip}-\eqref{sy6} if
\begin{itemize}
\item

$u$, $\varphi$ and $\mu$ satisfy

\begin{align}
&u\in L^{\infty}(0,T;G_{div})\cap L^2(0,T;V_{div}),\label{df1}\\
&u_t\in L^{4/3}(0,T;V_{div}'),\qquad\mbox{if}\quad d=3,\nonumber\\
&u_t\in L^{2-\gamma}(0,T;V_{div}'),\qquad\forall\gamma\in(0,1),
\quad\mbox{if}\quad d=2,\nonumber\\
&\varphi\in L^{\infty}(0,T;H)\cap L^2(0,T;V),\nonumber\\
&\varphi_t\in L^{4/3}(0,T;V'),\quad\mbox{if}\quad d=3,\label{df5}\\
&\varphi_t\in L^{2-\delta}(0,T;V'),
\quad\forall\delta\in(0,1),\quad\mbox{if}\quad d=2,\label{df6}\\
&\mu\in L^2(0,T;V); \nonumber
\end{align}

\item we have
\begin{equation}
\label{chempot}
\mu=a\varphi-J\ast\varphi+F'(\varphi),
\end{equation}
and for every $\psi\in V$, every $v\in V_{div}$ and for almost
any $t\in(0,T)$
\begin{align}
&\langle\varphi_t,\psi\rangle+(\nabla\mu,\nabla\psi)
=(u,\varphi\nabla\psi),\label{wf1}\\
&\langle u_t,v\rangle+(\nu(\varphi)2Du,Dv)+b(u,u,v)
=-(v,\varphi\nabla\mu)+\langle h,v\rangle;
\label{wf2}
\end{align}
\item the following initial conditions hold
\begin{equation}
u(0)=u_0,\qquad\varphi(0)=\varphi_0.\label{ic}
\end{equation}
\end{itemize}
\end{defn}

\begin{oss}
\label{mass} {\upshape As a consequence, the total concentration is conserved.
Indeed, take $\psi=1$ in \eqref{wf1} so
that $\langle\varphi_t,1\rangle=0$ and $(\varphi(t),1)=
(\varphi_0,1)$ for all $t\in[0,T]$.}
\end{oss}

\begin{oss}
{\upshape The initial conditions \eqref{ic} are meant in the weak
sense. Indeed we have $u\in C_w([0,T];G_{div})$ and $\varphi\in
C_w([0,T];H)$. }
\end{oss}

Assumptions (A1)--(A5) are enough to establish the existence of a
global weak solution \cite{CFG}. However, to prove the results of this
paper, we shall need to replace (A4) with the following stronger
assumption (compare with \cite[(A2)]{BH1}).
\begin{description}
\item[(A6)] $F\in C^2(\mathbb{R})$ and there exist $c_7>0$,
    $c_8>0$ and $q>0$ such that
            $$F^{\prime\prime}(s)+a(x)\geq c_7\vert s\vert^{2q} - c_8,
            \qquad\forall s\in\mathbb{R},\quad\mbox{a.e. }x\in\Omega.$$
\end{description}
Thanks to (A6) further regularity properties for $\varphi$,
$\varphi_t$, $u_t$ can be established and, in particular, the energy
identity in two dimensions can be obtained.
For this reason, in the case assumption (A6) holds, it is convenient to introduce the following
\begin{defn}
Suppose (A6) holds and let
 $T>0$, $h\in L^2(0,T;V_{div}')$, $u_0\in
 G_{div}$, $\varphi_0\in H$ with $F(\varphi_0)\in L^1(\Omega)$ be
 given. A couple $[u,\varphi]$ is a weak solution to
 \eqref{sy1}-\eqref{sy4} on $(0,T)$ satisfying
 \eqref{nslip}-\eqref{sy6} if $[u,\varphi]$ is a weak solution
 in the sense of Definition \ref{wfdfn} satisfying the further regularity property
\begin{equation}
\varphi \in L^\infty(0,T;L^{2+2q}(\Omega)).
 \label{impr0}
\end{equation}
\label{wfdfn2}
\end{defn}

 Summing up, the main results of
\cite{CFG} are contained in the following
\begin{thm}
\label{thm} Let $h\in L^2_{loc}([0,\infty);V^\prime_{div})$,
$u_0\in G_{div}$, $\varphi_0\in H$ such that $F(\varphi_0)\in
L^1(\Omega)$ and suppose that (A1)-(A5) are satisfied. Then, for
every given $T>0$, there exists a weak solution $[u,\varphi]$ (in the sense
of Definition \ref{wfdfn}) which satisfies the following energy
inequality for almost all $t>0$
\begin{equation}
\mathcal{E}(u(t),\varphi(t)) +\int_0^t\Big(2\|\sqrt{\nu(\varphi)}Du \|^2+\|\nabla\mu \|^2\Big)d\tau
\leq\mathcal{E}(u_0,\varphi_0)+\int_0^t\langle h(\tau),u \rangle d\tau,\label{ei}
\end{equation}
where we have set
$$\mathcal{E}(u(t),\varphi(t))=\frac{1}{2}\|u(t)\|^2+\frac{1}{4}
\int_{\Omega}\int_{\Omega}J(x-y)(\varphi(x,t)-\varphi(y,t))^2
dxdy+\int_{\Omega}F(\varphi(t)).$$ If (A6) holds in place of (A4)
then we also have
\begin{itemize}
\item there exists a weak solution $[u,\varphi]$ (in the sense of
    Definition \ref{wfdfn2}) such that
  \begin{align}
 &\varphi_t\in L^2(0,T;V'),\quad\mbox{if}\quad d=2\quad\mbox{ or }
 \quad d=3 \mbox{ and } q\geq 1/2,\label{impr2}\\
 & u_t\in L^2(0,T;V_{div}'),\quad\mbox{if}\quad d=2,\label{u_tnew}
 \end{align}
which still satisfies the energy inequality \eqref{ei} for almost all
$t>0$;
\item if $d=2$ then any weak solution (in the sense of Definition
    \ref{wfdfn2}) is such that
\begin{equation}
u\in C([0,\infty);G_{div}),\qquad\varphi\in C([0,\infty);H),
\nonumber
\end{equation}
and
\begin{equation}
\frac{d}{dt}\mathcal{E}(u,\varphi)
+2\|\sqrt{\nu(\varphi)}Du \|^2+\|\nabla\mu\|^2=\langle h(t),u\rangle,
\label{idendiffcor}
\end{equation}
i.e., \eqref{ei} with the equal sign holds for every $t\geq 0$; in
addition, if
$$
\|h\|_{L^2_{tb}(0,\infty;V_{div}')}:=\Big(\sup_{t\geq 0}
\int_t^{t+1}\|h(\tau)\|_{V_{div}'}^2 d\tau\Big)^{1/2}
<\infty
$$
then the following dissipative estimate is satisfied
\begin{equation}
\mathcal{E}(u(t),\varphi(t))\leq \mathcal{E}(u_0,\varphi_0)e^{-kt}+ F(m_0)|\Omega| + K,
\qquad\forall t\geq 0,\label{dissest}
\end{equation}
where $m_0=(\varphi_0,1)$ and $k$, $K$ are two positive constants
which are independent of the initial data, with $K$ depending on
$\Omega$, $\nu_1$, $J$, $F$,
$\|h\|_{L^2_{tb}(0,\infty;V_{div}')}$.
\end{itemize}
\label{exist}
\end{thm}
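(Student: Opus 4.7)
The plan is to construct weak solutions by a Faedo--Galerkin scheme that respects the gradient--flow structure, then derive enough regularity (under (A6)) to pass from an energy inequality to an energy identity in two dimensions, and finally extract the dissipative estimate via a differential Gronwall argument.

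First I would discretize by projecting the Navier--Stokes equation onto the span of the first $n$ eigenfunctions $\{w_j\}$ of the Stokes operator $A$, while projecting the Cahn--Hilliard equation onto the span of the first $n$ eigenfunctions of the Neumann Laplacian in $V$. This yields a system of ODEs for the coefficients which is locally solvable; the key point is that the approximate equations for $u_n$ and $\varphi_n$ can be tested against $u_n$ and against the approximate chemical potential $\mu_n = a\varphi_n - J\ast\varphi_n + F'(\varphi_n)$ respectively. Because $P_n$ commutes with the relevant projections and $\mu_n$ lies in the test-function space, cancellation of the coupling term $(\mu_n\nabla\varphi_n,u_n)$ between the two resulting identities produces, at the discrete level, exactly the energy identity corresponding to \eqref{ei} with equality. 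Integrating in time and using (A3)--(A4) (the coercivity of $F''+a$ together with the lower bound $F(s)\geq c_1 s^2 - c_2$ with $c_1>\frac12\|J\|_{L^1}$) bounds $u_n$ in $L^\infty(0,T;G_{\rm div})\cap L^2(0,T;V_{\rm div})$, $\varphi_n$ in $L^\infty(0,T;H)$, and $\nabla\mu_n$ in $L^2(0,T;H)$. Then, exploiting the identity $\nabla\mu_n = a\nabla\varphi_n - \nabla J\ast\varphi_n + F''(\varphi_n)\nabla\varphi_n$ and (A3), one bounds $\nabla\varphi_n$ in $L^2(0,T;H)$.

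Next I would estimate the time derivatives. Testing the Galerkin Cahn--Hilliard equation with $\psi\in V$ and using (A5) gives $\varphi'_n$ in $L^{4/3}(0,T;V')$ in 3D and $L^{2-\delta}(0,T;V')$ in 2D; here (A5) is essential to control $F'(\varphi_n)$ in the dual norm through the bound $|F'(s)|^p\leq c_3|F(s)|+c_4$. For $u'_n$ the nonlinearity $\varphi_n\nabla\mu_n$ is handled by interpolation using \eqref{standest3D}--\eqref{standest2D} together with the $L^2_t$ control of $\nabla\mu_n$. With these bounds, Aubin--Lions yields strong convergence of $(u_n,\varphi_n)$ in $L^2(0,T;G_{\rm div})\times L^2(0,T;H)$, which is sufficient to pass to the limit in $\mathcal{B}(u_n,u_n)$, in $\mathcal{A}(u_n,\varphi_n)$ via the continuity of $\nu$, and in $F'(\varphi_n)$ via almost everywhere convergence plus uniform integrability given by (A5). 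Passing to the limit inferior in the energy identity produces the energy inequality \eqref{ei}. The main obstacle throughout is the absence of Laplacian regularization on $\varphi$: one cannot obtain $\varphi\in L^2(0,T;H^2)$, so every estimate for $F'(\varphi)$ and for the coupling term must rely on the $L^2_tV$ bound for $\mu$ combined with interpolation.

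When (A4) is strengthened to (A6) I would test the Cahn--Hilliard equation with $\varphi_n$ (which is now admissible) and use the inequality $F''(s)+a(x)\geq c_7|s|^{2q}-c_8$ to produce, after integration by parts, a term $c_7\int|\varphi_n|^{2q+2}$ controlled by the energy. This yields the improved regularity \eqref{impr0}. In turn, $F'(\varphi_n)$ lies in a better Lebesgue space, giving $\varphi'_n\in L^2(0,T;V')$ under the stated conditions on $q$, and in 2D $u'_n\in L^2(0,T;V'_{\rm div})$ follows from a parabolic duality estimate for $\varphi\nabla\mu$ using \eqref{standest2D}. Since $u\in L^2(0,T;V_{\rm div})$ with $u_t\in L^2(0,T;V'_{\rm div})$ implies $u\in C([0,T];G_{\rm div})$, and analogously for $\varphi$, one can now legitimately test \eqref{wf1}--\eqref{wf2} with $\mu$ and $u$ to recover the pointwise energy identity \eqref{idendiffcor}; this replaces the ``lim inf'' step by a genuine chain rule.

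Finally, for the dissipative estimate I would start from \eqref{idendiffcor} and, using (A6) together with $\nu\geq\nu_1$, Poincar\'e's inequality in $V_{\rm div}$, and the fact that the conserved mean $m_0$ allows us to write the fluctuation $\varphi-m_0$ and control $F(\varphi)-F(m_0)$ from below modulo the energy, derive a differential inequality of the form $\tfrac{d}{dt}\mathcal{E} + k\mathcal{E}\leq k\bigl(F(m_0)|\Omega|+C\bigr)+C\|h(t)\|_{V'_{\rm div}}^2$. The translation-boundedness of $h$ then plugs into a uniform Gronwall lemma of the integral kind to yield \eqref{dissest}; the constants depend on $\Omega,\nu_1,J,F$ and $\|h\|_{L^2_{tb}}$ but not on the initial datum. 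I expect the delicate point in this last step to be producing a \emph{coercive} lower bound on $\mathcal{E}$ in terms of $\|u\|^2+\|\varphi\|^2$ that is uniform in the mean $m_0$: assumption (A4) (now implied by (A6)) with $c_1>\frac12\|J\|_{L^1}$ is precisely what makes this work, since it ensures that the sum of the double integral involving $J$ and $\int F(\varphi)$ dominates $\|\varphi\|^2$ up to the constant $F(m_0)|\Omega|$.
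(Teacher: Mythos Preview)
Your outline is essentially correct and matches the approach of the cited reference. Note, however, that in this paper the theorem is \emph{quoted} from \cite{CFG} rather than proved: the sentence preceding it reads ``Summing up, the main results of \cite{CFG} are contained in the following,'' and no proof is given here. That said, the paper does confirm (see the proof of the lemma establishing \eqref{eist} in Section~5) that the argument in \cite{CFG} is exactly the Faedo--Galerkin scheme you describe, with the energy identity at the approximate level and passage to the limit via Aubin--Lions and lower semicontinuity.

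Two small points. First, the bound $\varphi\in L^\infty(0,T;L^{2+2q}(\Omega))$ under (A6) does not come from testing the Cahn--Hilliard equation with $\varphi_n$; it follows directly from the energy bound, since integrating (A6) twice gives $F(s)\geq C_9|s|^{2+2q}-C_{10}$ and $\int_\Omega F(\varphi_n)$ is already controlled by $\mathcal{E}$. Second, for the dissipative estimate \eqref{dissest}, the mechanism you describe is right in spirit, but the paper's own derivation (carried out in Section~5 for the 3D case, and identical in 2D once the energy identity is available) proceeds by multiplying the chemical-potential equation $\mu=a\varphi-J\ast\varphi+F'(\varphi)$ by $\varphi$, using the convexity decomposition $F(s)=\widetilde G(x,s)-\tfrac12 a(x)s^2$ to obtain $(\mu,\varphi)\geq \int F(\varphi)+\text{(nonlocal term)}-\tfrac12\|\sqrt{a}\varphi\|^2-F(0)|\Omega|$, and then bounding $(\mu-\bar\mu,\varphi)$ by Poincar\'e--Wirtinger. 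This yields $\mathcal{E}\leq C(\|\nabla u\|^2+\|\nabla\mu\|^2)+C$, which combined with the energy identity produces the differential inequality $\tfrac{d}{dt}\mathcal{E}+k\mathcal{E}\leq l+\tfrac{1}{2\nu_1}\|h\|_{V'_{div}}^2$; the nonzero-mean case is handled by the shift $\widetilde\varphi=\varphi-m_0$, which explains the appearance of $F(m_0)|\Omega|$ in \eqref{dissest}. Your ``coercive lower bound'' phrasing slightly inverts the logic: what is needed is an \emph{upper} bound on $\mathcal{E}$ by the dissipation, not a lower bound on $\mathcal{E}$ by the state norms.
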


\begin{oss}
{\upshape
If $u\in C_w([0,T];G_{div})$ and $\varphi\in
C_w([0,T];H)$ are the weakly continuous representatives of the global weak solution $z=[u,\varphi]$
given by Theorem \ref{exist}, then the energy inequality \eqref{ei} holds
also for {\itshape all} $t\geq 0$ (see Lemma \ref{templsc} below).
}
\end{oss}

We conclude by observing that it is straightforward to deduce  from
Theorem \ref{thm} the following result for the convective nonlocal Cahn-Hilliard equation
with a given velocity field.
\begin{cor}
\label{NLCH1} Let $u\in L^2_{loc}([0,\infty);V_{div}\cap
L^\infty(\Omega)^d)$ be given and let $\varphi_0\in H$ be such
that $F(\varphi_0)\in L^1(\Omega)$. Suppose that (A1), (A3),
(A5) and (A6) (with $q \geq \frac{1}{2}$ if $d=3$) are satisfied. Then,
for every $T>0$, there exists a weak solution $\varphi \in
L^2(0,T;V)\cap H^1(0,T;V^\prime)$ to
\eqref{chempot}-\eqref{wf1} on $[0,T]$ such that
$\varphi(0)=\varphi_0$ and $(\varphi(t),1) = (\varphi_0,1)$ for all
$t\in[0,T]$. In addition, the following energy identity holds for all
$t\geq 0$
\begin{equation}
\frac{d}{dt}\left(\frac{1}{4}
\int_{\Omega}\int_{\Omega}J(x-y)(\varphi(x,t)-\varphi(y,t))^2 dxdy
+\int_{\Omega}F(\varphi(t))\right) +\|\nabla\mu\|^2 =
(u\varphi, \nabla\mu).
\label{energyCH}
\end{equation}
\end{cor}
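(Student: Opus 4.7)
The approach is to revisit the Galerkin argument used in \cite{CFG} to establish Theorem \ref{exist}, but restricted to the Cahn-Hilliard subsystem alone: once $u$ is regarded as given data the coupling is lost, and the estimates for $\varphi$ from \cite{CFG} go through with no change in substance. Concretely, I would use a Faedo-Galerkin scheme based on the eigenfunctions $\{\psi_j\}$ of the Neumann Laplacian (with $\psi_1$ proportional to the constant), set $\varphi_n(t)=\sum_{j=1}^n c_j^n(t)\psi_j$ and $\mu_n = a\varphi_n - J\ast\varphi_n + F'(\varphi_n)$, and solve the projected ODE derived from \eqref{wf1}. Local-in-time solvability is standard Carath\'eodory theory, and conservation of mass is immediate by testing with $\psi_1$.

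The core a priori bound is the Galerkin analogue of the very identity \eqref{energyCH}: testing with $\mu_n$ and invoking (A6) with $q\geq 1/2$ in $d=3$, while controlling the convective term through $|(u,\varphi_n\nabla\mu_n)|\leq \|u\|_{L^\infty}\|\varphi_n\|\,\|\nabla\mu_n\|$, yields uniform bounds for $\varphi_n$ in $L^\infty(0,T;L^{2+2q})$, for $F(\varphi_n)$ in $L^\infty(0,T;L^1)$, and for $\nabla\mu_n$ in $L^2(0,T;H)$. Differentiating $\mu_n$ gives $\nabla\mu_n = (a+F''(\varphi_n))\nabla\varphi_n + \varphi_n\nabla a - \nabla J\ast\varphi_n$; the coercivity (A3) then converts the bound on $\nabla\mu_n$ into a uniform bound for $\varphi_n$ in $L^2(0,T;V)$. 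Testing \eqref{wf1} against a generic $\psi\in V$ and using (A5)-(A6) furnishes $\partial_t\varphi_n$ uniformly in $L^2(0,T;V')$ --- the threshold $q\geq 1/2$ in $d=3$ is precisely what is needed to bound the convective term in this duality. Aubin-Lions compactness then produces a limit $\varphi\in L^2(0,T;V)\cap H^1(0,T;V')\hookrightarrow C([0,T];H)$ with strong convergence in $L^2(0,T;H)$, while the polynomial growth in (A5) combined with the uniform $L^\infty L^{2+2q}$ bound identifies $F'(\varphi)$ as the weak limit of $F'(\varphi_n)$; passage to the limit in the remaining terms is routine, and $\varphi(0)=\varphi_0$ follows from $\varphi_n(0)\to\varphi_0$ in $H$ together with $\varphi\in C([0,T];H)$.

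The single genuinely delicate point is the pointwise-in-time energy identity \eqref{energyCH}: the Galerkin identity only survives in the limit as an inequality, through lower semicontinuity. To recover equality I would invoke a chain-rule argument. With $\varphi\in L^2(0,T;V)\cap H^1(0,T;V')\cap L^\infty(0,T;L^{2+2q})$ and the associated $\mu\in L^2(0,T;V)$, assumptions (A3), (A5), (A6) give $F'(\varphi)\in L^2(0,T;V)$, which suffices to justify $\frac{d}{dt}\int_\Omega F(\varphi) = \langle \varphi_t, F'(\varphi)\rangle$ as an identity in $\mathcal{D}'(0,T)$; the nonlocal contribution in the free energy is a continuous quadratic form on $H$ whose time derivative is handled trivially. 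Writing $\mathcal{F}$ for the bracketed free energy in \eqref{energyCH}, one thus obtains $\langle\varphi_t,\mu\rangle = \frac{d}{dt}\mathcal{F}(\varphi)$, and choosing $\psi=\mu\in V$ in \eqref{wf1} yields \eqref{energyCH} almost everywhere. Since the right-hand side belongs to $L^1(0,T)$ and $\mathcal{F}$ is continuous along $C([0,T];H)\cap L^\infty(0,T;L^{2+2q})$, the identity extends to every $t\in[0,T]$, which is the claim.
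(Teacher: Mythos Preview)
Your proposal is correct and follows precisely the route the paper intends: the Corollary is stated without a separate proof, as a ``straightforward'' specialization of Theorem~\ref{thm} (proved in \cite{CFG} via Faedo--Galerkin), and you are simply spelling out that Galerkin argument for the decoupled Cahn--Hilliard subsystem together with the chain-rule justification of \eqref{energyCH}. One minor remark: with $u\in L^2_{loc}([0,\infty);L^\infty(\Omega)^d)$ given, the convective contribution to $\varphi_t$ is bounded in $L^2(0,T;V')$ directly via $|(u,\varphi_n\nabla\psi)|\leq\|u\|_{L^\infty}\|\varphi_n\|\|\nabla\psi\|$, so the threshold $q\geq 1/2$ is not actually needed at that step---it is inherited from Theorem~\ref{thm}, where $u$ is only in $L^2(0,T;V_{div})$ and one must use $\varphi\in L^\infty(0,T;L^{2+2q})\subset L^\infty(0,T;L^3)$ to place $u\varphi$ in $L^2(0,T;H)$.
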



\section{Global attractor in 2D}\setcounter{equation}{0}
\label{sec3}


We first report for the reader's convenience some basic definitions and results from the theory
of generalized semiflows (see \cite{Ba}).

Let $\mathcal{X}$ be a metric space (not necessarily complete)  with
metric $\mathbf{d}$. For any $A,B\subset\mathcal{X}$ the
Hausdorff semidistance between $A$ and $B$ is
dist$(A,B):=\sup_{a\in A}\inf_{b\in B}\mathbf{d}(a,b)$.

\begin{defn}
A {\it generalized semiflow} $\mathcal{G}$ on $\mathcal{X}$ is a family of
maps $z:[0,\infty)\to\mathcal{X}$ satisfying the following hypothesis
\begin{description}
\item[(H1)] ({\it Existence}) For each $z_0\in\mathcal{X}$ there exists at least one $z\in\mathcal{G}$
with $z(0)=z_0$.
\item[(H2)]({\it Translates of solutions are solutions}) If $z\in\mathcal{G}$ and $\tau\geq 0$,
then $z^{\tau}\in\mathcal{G}$, where $z^{\tau}(t):=z(t+\tau)$, for every $t\geq 0$.
\item[(H3)]({\it Concatenation}) If $z_1,z_2\in\mathcal{G}$ and $\tau\geq 0$, with $z_1(\tau)=z_2(0)$,
setting
$$
z(t):= \left\{ \begin{array}{ll}
z_1(t) & \textrm{if}\quad 0\leq t\leq\tau,\\
z_2(t) & \textrm{if}\quad t>\tau,
\end{array} \right.
$$
then $z\in\mathcal{G}$.
\item[(H4)]({\it Upper semicontinuity with respect to initial data})
If $z_j\in\mathcal{G}$ with $z_j(0)\to z_0$, then there exist a subsequence $\{z_{j_k}\}$
of $\{z_j\}$ and $z\in\mathcal{G}$ with $z(0)=z_0$
such that $z_{j_k}(t)\to z(t)$ for each $t\geq 0$.
\end{description}
\label{defsemgen}
\end{defn}

If $\mathcal{G}$ is a generalized semiflow and $E\subset\mathcal{X}$, we define for every $t\geq 0$
$$T(t)E=\{z(t): z\in\mathcal{G}\mbox{ with }z(0)\in E\}.$$
The {\it positive orbit} of $z\in\mathcal{G}$ is the set $\gamma^{+}(z)=\{z(t): t\geq 0\}$.
If $E\subset\mathcal{X}$, then the {\it positive orbit} of $E$ is the set
$\gamma^{+}(E)=\cup_{t\geq 0}T(t)E$. For $\tau\geq 0$ we also set
$$\gamma^{\tau}(E)=\bigcup_{t\geq\tau}T(t)E=\gamma^{+}(T(\tau)E).$$
The $\omega-${\it limit} of $z\in\mathcal{G}$ is the set
$$\omega(z):=\{w\in\mathcal{X}: z(t_j)\to w\mbox{ for some sequence }t_j\to\infty\}.$$
If $E\subset\mathcal{X}$ the $\omega-${\it limit} of $E$ is the set
$$\omega(E):=\{w\in\mathcal{X}:\exists z_j\in\mathcal{G},
z_j(0)\in E,z_j(0)\mbox{ bounded, and }\exists t_j\to\infty\mbox{ s.t. }
z_j(t_j)\to w\}.$$

The subset $\mathcal{A}$ is a {\it global attractor} for the
generalized semiflow $\mathcal{G}$ if $\mathcal{A}$ is compact,
invariant, i.e. $T(t)\mathcal{A}=\mathcal{A}$ for all $t\geq 0$, and attracts all
bounded subsets of $\mathcal{X}$, i.e. dist$(T(t)B,\mathcal{A})\to 0$
as $t\to\infty$, for every bounded set $B\subset\mathcal{X}$.

The generalized semiflow $\mathcal{G}$ is {\it eventually bounded} if, given
any bounded set $B\subset\mathcal{G}$, there exists $\tau\geq 0$ such that $\gamma^{\tau}(B)$
is bounded.

$\mathcal{G}$ is {\it point dissipative} if there is a bounded set $\mathcal{B}_0$ such that
for any $z\in\mathcal{G}$ there exists $t_0=t_0(z)\geq 0$ such that $z(t)\in\mathcal{B}_0$
for all $t\geq t_0$.

$\mathcal{G}$ is {\it asymptotically compact} if for any sequence $z_j\in\mathcal{G}$ with
$z_j(0)$ bounded, and any sequence $t_j\to\infty$, the sequence $z_j(t_j)$ is precompact.

$\mathcal{G}$ is {\it compact} if for any sequence $z_j\in\mathcal{G}$ with $z_j(0)$ bounded
there exists a subsequence $z_{j_k}$ such that $z_{j_k}(t)$ converges for every $t>0$.

\begin{prop}
Let $\mathcal{G}$ be asymptotically compact. Then  $\mathcal{G}$
is eventually bounded.
\end{prop}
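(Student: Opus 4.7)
The plan is to argue by contradiction. Suppose that $\mathcal{G}$ is not eventually bounded. Then there exists a bounded set $B\subset\mathcal{X}$ such that $\gamma^{\tau}(B)$ is unbounded for every $\tau\geq 0$. Fixing a reference point $x_0\in\mathcal{X}$, this means that for each positive integer $n$ we can find $t_n\geq n$ and an element of $T(t_n)B$, i.e.\ a trajectory $z_n\in\mathcal{G}$ with $z_n(0)\in B$, such that $\mathbf{d}(z_n(t_n),x_0)\geq n$.

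Now I would invoke the hypothesis. The sequence $\{z_n(0)\}$ is contained in the bounded set $B$, and $t_n\to\infty$. By asymptotic compactness of $\mathcal{G}$, the sequence $\{z_n(t_n)\}$ admits a convergent subsequence $\{z_{n_k}(t_{n_k})\}$ in $\mathcal{X}$. In particular, this subsequence is bounded in the metric $\mathbf{d}$, which contradicts $\mathbf{d}(z_{n_k}(t_{n_k}),x_0)\geq n_k\to\infty$. Hence $\mathcal{G}$ must be eventually bounded.

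The argument is genuinely short, so I do not anticipate a serious obstacle; the only point requiring care is the extraction of the sequence $\{z_n\}$ from the unboundedness of $\gamma^{n}(B)$. Namely, since $\gamma^{n}(B)=\bigcup_{t\geq n}T(t)B$ is unbounded, there is some $t_n\geq n$ and some $y_n\in T(t_n)B$ with $\mathbf{d}(y_n,x_0)\geq n$; by definition of $T(t_n)B$, any such $y_n$ is of the form $z_n(t_n)$ for a trajectory $z_n\in\mathcal{G}$ with $z_n(0)\in B$. Once this is in place, asymptotic compactness closes the argument immediately.
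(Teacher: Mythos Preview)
Your argument is correct and is the standard proof of this fact. Note, however, that the paper does not supply its own proof of this proposition: it is stated as a background result from the theory of generalized semiflows, with a reference to Ball~\cite{Ba}, so there is nothing in the paper to compare against beyond observing that your proof is the expected one.
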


\begin{prop}
Let $\mathcal{G}$ be eventually bounded and compact. Then $\mathcal{G}$ is asymptotically compact.
\label{asymptcompact}
\end{prop}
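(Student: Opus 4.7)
The plan is to deduce asymptotic compactness directly from the definitions by exploiting the translation invariance (H2). Given any sequence $z_j\in\mathcal{G}$ with $\{z_j(0)\}$ contained in a bounded set $B\subset\mathcal{X}$ and any $t_j\to\infty$, I need to show that $\{z_j(t_j)\}$ is precompact in $\mathcal{X}$.

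First I would invoke eventual boundedness on $B$ to obtain some $\tau\geq 0$ such that $\gamma^{\tau}(B)$ is a bounded subset of $\mathcal{X}$. Next, I would fix an arbitrary $\sigma>0$; since $t_j\to\infty$, for all sufficiently large $j$ we have $t_j-\sigma\geq\tau$, so $z_j(t_j-\sigma)\in T(t_j-\sigma)B\subset\gamma^{\tau}(B)$, which is bounded. Then I would introduce the shifted trajectories $w_j:=z_j^{\,t_j-\sigma}$, defined for $t\geq 0$ by $w_j(t)=z_j(t+t_j-\sigma)$; by hypothesis (H2) each $w_j$ belongs to $\mathcal{G}$, and by construction $\{w_j(0)\}=\{z_j(t_j-\sigma)\}$ is bounded.

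At this point compactness of $\mathcal{G}$ applies to $\{w_j\}$: there exists a subsequence $\{w_{j_k}\}$ such that $w_{j_k}(t)$ converges in $\mathcal{X}$ for every $t>0$. Taking $t=\sigma$ gives convergence of $w_{j_k}(\sigma)=z_{j_k}(t_{j_k})$, which exhibits a convergent subsequence of $\{z_j(t_j)\}$ and thus proves precompactness.

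The reason for introducing the auxiliary parameter $\sigma>0$ rather than working directly with the shifts $z_j^{\,t_j}$ is the only subtle point. The definition of compact generalized semiflow in the paper only provides convergence of the subsequence at times $t>0$, not at $t=0$, so we must ensure that $t_j$ is split as $(t_j-\sigma)+\sigma$ with $\sigma>0$; the eventual boundedness is precisely what lets us place the shifted initial data $w_j(0)$ inside a bounded set even after subtracting $\sigma$. This is essentially the only thing to verify, so I do not anticipate a real obstacle; the argument is a brief bookkeeping exercise combining (H2), eventual boundedness, and compactness.
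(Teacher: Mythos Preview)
Your argument is correct. Note, however, that the paper does not actually supply a proof of this proposition: it is one of the background results on generalized semiflows quoted from Ball \cite{Ba} without proof, so there is no ``paper's own proof'' to compare against. The argument you give---shift by $t_j-\sigma$ via (H2), use eventual boundedness to place the shifted initial data in a bounded set, then invoke compactness and evaluate at the strictly positive time $\sigma$---is precisely the standard proof from Ball's paper, and your remark about why $\sigma>0$ is needed (compactness only guarantees convergence for $t>0$) is exactly the point.
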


\begin{thm}
A generalized semiflow $\mathcal{G}$ has a global attractor if and only if
$\mathcal{G}$ is point dissipative and asymptotically compact.
The global attractor $\mathcal{A}$ is unique and given by
$$\mathcal{A}=\bigcup\{\omega(B): B\mbox{ is a bounded subset of }\mathcal{X}\}=\omega(\mathcal{X}).$$
Furthermore $\mathcal{A}$ is the maximal compact invariant subset of $\mathcal{X}$.
\label{attractor}
\end{thm}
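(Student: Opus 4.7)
The plan is to prove the two directions separately. Necessity is immediate: if a global attractor $\mathcal{A}$ exists, then any bounded open neighborhood $\mathcal{B}_0$ of the (compact) set $\mathcal{A}$ is point dissipative, by applying attraction to $B=\{z(0)\}$ for each $z\in\mathcal{G}$; and asymptotic compactness follows from $\mathrm{dist}(z_j(t_j),\mathcal{A})\to 0$ combined with the compactness of $\mathcal{A}$.

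For sufficiency, I would first prove the key lemma: for every nonempty bounded $B\subset\mathcal{X}$, the set $\omega(B)$ is nonempty, compact, invariant, and satisfies $\mathrm{dist}(T(t)B,\omega(B))\to 0$. Nonemptiness and precompactness come directly from asymptotic compactness, closedness is automatic, and attraction is a standard contradiction argument: if $z_j\in\mathcal{G}$, $z_j(0)\in B$, $t_j\to\infty$, $\mathrm{dist}(z_j(t_j),\omega(B))\geq\epsilon$, then asymptotic compactness would produce a subsequential limit in $\omega(B)$, a contradiction. Invariance requires all four axioms: negative invariance $\omega(B)\subset T(t)\omega(B)$ is obtained, given $z_j(t_j)\to w\in\omega(B)$, by applying asymptotic compactness to $\{z_j(t_j-t)\}$ to extract $v$, then (H4) to the shifted solutions $z_j^{t_j-t}\in\mathcal{G}$ (which are admissible by (H2)) to produce $\tilde z\in\mathcal{G}$ with $\tilde z(0)=v\in\omega(B)$ and $\tilde z(t)=w$; positive invariance $T(t)\omega(B)\subset\omega(B)$ is proved similarly by combining (H2), (H3), (H4).

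The main technical step, and the one I expect to be the principal obstacle, is upgrading point dissipativity to the existence of a bounded absorbing set for bounded sets. My plan is to fix $\mathcal{B}_0$ from point dissipativity, choose a bounded open neighborhood $U$ of $\overline{\mathcal{B}_0}$, and set $\mathcal{B}_1:=\overline{\gamma^{\tau}(U)}$ for a suitably large $\tau$. Eventual boundedness (which follows from asymptotic compactness via the earlier proposition) ensures $\mathcal{B}_1$ is bounded. The subtle point is that $\mathcal{B}_1$ must absorb every bounded $B$: for each $z\in\mathcal{G}$ with $z(0)\in B$, point dissipativity yields an entry time $t(z)$ into $U$, and one must show $\sup\{t(z):z(0)\in B\}<\infty$, whence $T(t)B\subset\mathcal{B}_1$ for all large $t$. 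The multivalued nature of $\mathcal{G}$ forces replacing the classical single-valued semigroup argument by a contradiction combining (H4) with asymptotic compactness: if $t(z_j)\to\infty$, one extracts a subsequential limit of $\{z_j(s_j)\}$ for some $s_j\nearrow t(z_j)$, lifts it through (H4) and (H2) to a trajectory $\bar z\in\mathcal{G}$ that avoids $U$ for arbitrarily long times, contradicting point dissipativity applied to $\bar z$. With $\mathcal{B}_1$ in hand, define $\mathcal{A}:=\omega(\mathcal{B}_1)$; the key lemma delivers compactness and invariance of $\mathcal{A}$, while attraction of any bounded $B$ follows from $T(t)B\subset\mathcal{B}_1$ eventually together with $\mathrm{dist}(T(s)\mathcal{B}_1,\mathcal{A})\to 0$.

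Finally, uniqueness, maximality, and the identity $\mathcal{A}=\bigcup\{\omega(B):B\text{ bounded}\}=\omega(\mathcal{X})$ are short corollaries. For any compact invariant $C$, the relation $C=T(t)C$ together with attraction (applied to the bounded set $C$) yields $C\subset\mathcal{A}$, so $\mathcal{A}$ is the maximal compact invariant set and therefore unique. Attraction also gives $\omega(B)\subset\mathcal{A}$ for every bounded $B$, while $\mathcal{A}=\omega(\mathcal{B}_1)$ is itself one of these $\omega(B)$, establishing the stated formula.
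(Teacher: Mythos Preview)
The paper does not supply a proof of this theorem at all: it is quoted verbatim from Ball~\cite{Ba} as background material, together with Propositions~1 and~2 and the surrounding definitions. There is therefore no ``paper's own proof'' to compare against; the comparison must be with Ball's original argument.

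Your outline is essentially Ball's proof, and it is correct in structure. The decomposition into (i) properties of $\omega(B)$ for bounded $B$, (ii) upgrading point dissipativity to a bounded absorbing set via eventual boundedness and a contradiction argument through (H2)--(H4), and (iii) reading off uniqueness, maximality, and the characterization of $\mathcal{A}$, matches Ball's route closely. One small point worth tightening: in the absorbing-set step your description of the contradiction (``extract a subsequential limit of $\{z_j(s_j)\}$ \ldots\ lifts it through (H4) and (H2) to a trajectory $\bar z$ that avoids $U$ for arbitrarily long times'') is a bit telegraphic; in Ball's argument one must be careful that the limit trajectory produced by (H4) really inherits the avoidance property at \emph{each} fixed time from the approximating trajectories, and then invoke point dissipativity on that single limit. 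Also, for the identity $\mathcal{A}=\omega(\mathcal{X})$ you should note explicitly that the paper's definition of $\omega(E)$ already restricts to bounded sequences $z_j(0)$, so $\omega(\mathcal{X})=\bigcup\{\omega(B):B\text{ bounded}\}$ is immediate from the definition rather than requiring a separate step.
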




We now turn to our system \eqref{sy1}-\eqref{sy4} endowed with
\eqref{nslip} in the case $d=2$. Also, we suppose that $h$ is time
independent, i.e.,
\begin{equation}
\label{force}
h\in V^\prime_{div}.
\end{equation}
We first have to choose a suitable metric space where the weak
solutions can be defined in order to construct the associated
generalized semiflow.

We therefore fix $m\geq0$ and introduce the metric space
$$\mathcal{X}_m:=G_{div}\times\mathcal{Y}_m,$$
where
\begin{equation}
\label{phsp}
\mathcal{Y}_m:=\{\varphi\in H\,:\, F(\varphi)\in L^1(\Omega),
\; |(\varphi,1)|\leq m\},
\end{equation}
endowed with the metric
\begin{equation*}
\mathbf{d}(z_1,z_2)=\|u_1-u_2\|+\|\varphi_1-\varphi_2\|
+\Big|\int_{\Omega}F(\varphi_1)-\int_{\Omega}F(\varphi_2)\Big|^{1/2},
\end{equation*}
for every $z_1:=[u_1,\varphi_1]$ and $z_2:=[u_2,\varphi_2]$ in $\mathcal{X}_m$.

On account of Theorem \ref{exist}, let us now denote by
$\mathcal{G}$ the set of all weak solutions in the sense of
Definition \ref{wfdfn2} (we shall assume (A6)) corresponding to all initial
data $z_0:=[u_0,\varphi_0]\in\mathcal{X}_m$. Our aim is to prove
that $\mathcal{G}$ is a generalized semiflow on $\mathcal{X}_m$.

\begin{prop}
Let $d=2$. Suppose that (A1)-(A3), (A5), (A6) and \eqref{force} hold.
Then $\mathcal{G}$ is a generalized semiflow on $\mathcal{X}_m$.
\label{gensem}
\end{prop}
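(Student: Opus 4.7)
The plan is to check each of the four axioms (H1)--(H4) of Definition \ref{defsemgen}. For (H1), given $z_0=[u_0,\varphi_0]\in\mathcal{X}_m$, I would invoke Theorem \ref{exist} under (A6) to produce a weak solution $z=[u,\varphi]$ in the sense of Definition \ref{wfdfn2} with $z(0)=z_0$, and use mass conservation (Remark \ref{mass}) to keep $|(\varphi(t),1)|=|(\varphi_0,1)|\leq m$, so that $\varphi(t)\in\mathcal{Y}_m$ for every $t\geq 0$. The two-dimensional continuity $u\in C([0,\infty);G_{div})$, $\varphi\in C([0,\infty);H)$, combined with the energy identity \eqref{idendiffcor} (which forces $t\mapsto\int_\Omega F(\varphi(t))$ to be continuous), even makes $t\mapsto z(t)$ continuous into $(\mathcal{X}_m,\mathbf{d})$. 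Axiom (H2) will be immediate from the autonomous character of \eqref{wf1}--\eqref{wf2} under \eqref{force} and translation invariance of the regularity classes. For (H3) I would take $z_1,z_2\in\mathcal{G}$ with $z_1(\tau)=z_2(0)$; the regularity on each subinterval is inherited, and the 2D continuity provides the matching at $t=\tau$ in $G_{div}\times H$ that lets me add the weak formulations \eqref{wf1}--\eqref{wf2} on $[0,\tau]$ and $[\tau,T]$ into a single pair of identities on $[0,T]$.

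The hard part is (H4). I would take $\{z_j=[u_j,\varphi_j]\}\subset\mathcal{G}$ with $z_j(0)\to z_0$ in $\mathbf{d}$. This metric convergence yields $\mathcal{E}(u_j(0),\varphi_j(0))\to\mathcal{E}(u_0,\varphi_0)$, and feeding it into the energy inequality \eqref{ei} (using Gronwall to absorb the forcing) produces uniform bounds on $\{u_j\}$ in $L^\infty(0,T;G_{div})\cap L^2(0,T;V_{div})$, on $\{\varphi_j\}$ in $L^\infty(0,T;L^{2+2q}(\Omega))\cap L^2(0,T;V)$, on $\{\mu_j\}$ in $L^2(0,T;V)$, and on the time derivatives $\{(u_j)_t\}$, $\{(\varphi_j)_t\}$ as in \eqref{impr2}--\eqref{u_tnew}. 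A standard Aubin--Lions argument then extracts a subsequence and a limit $z=[u,\varphi]$ with the natural weak convergences, strong convergence in $L^2(0,T;G_{div}\times H)$, and almost-everywhere convergence of $\varphi_j$ on $\Omega\times(0,T)$. Passage to the limit in \eqref{chempot}, \eqref{wf1}, \eqref{wf2} is routine; the nonlinearity $F'(\varphi_j)\to F'(\varphi)$ is handled by Vitali's theorem using the growth bound \eqref{growth} and the $L^\infty L^{2+2q}$ estimate, while the convolution $J\ast\varphi_j$ converges strongly in $L^2(0,T;H)$ because convolution with $J\in L^1(\mathbb{R}^d)$ is compact on $L^2$ of a bounded domain. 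Hence $z\in\mathcal{G}$ with $z(0)=z_0$.

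Upgrading this to convergence pointwise in $t$ in the metric $\mathbf{d}$ is the main obstacle. From the $L^\infty$ and time-derivative bounds, an Arzel\`a--Ascoli argument in $V_{div}'\times V'$ gives $u_j(t)\rightharpoonup u(t)$ in $G_{div}$ and $\varphi_j(t)\rightharpoonup\varphi(t)$ in $H$ for every $t\geq 0$. I would split $\mathcal{E}(u,\varphi)=\frac{1}{2}\|u\|^2+\int_\Omega\Psi(\cdot,\varphi)-\frac{1}{2}(J\ast\varphi,\varphi)$, where $\Psi(x,s):=F(s)+\frac{1}{2}a(x)s^2$ is strictly convex in $s$ by (A3); the first two summands are weakly lower semicontinuous, while the third is weakly continuous by the compactness noted above. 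Applying \eqref{ei} to each $z_j$ (valid for every $t\geq 0$ by Lemma \ref{templsc}), using weak convergence of $u_j$ to $u$ in $L^2(0,t;V_{div})$ to pass to the limit in $\int_0^t\langle h,u_j\rangle\,d\tau$, and invoking lower semicontinuity of the two dissipation integrals, I would obtain
\begin{equation*}
\limsup_{j\to\infty}\mathcal{E}(u_j(t),\varphi_j(t))
\leq\mathcal{E}(u_0,\varphi_0)+\int_0^t\langle h,u\rangle\,d\tau
-\int_0^t\bigl(2\|\sqrt{\nu(\varphi)}Du\|^2+\|\nabla\mu\|^2\bigr)\,d\tau
=\mathcal{E}(u(t),\varphi(t)),
\end{equation*}
the last equality being the 2D energy identity \eqref{idendiffcor} for the limit $z$. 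Combined with the opposite inequality $\mathcal{E}(u(t),\varphi(t))\leq\liminf_j\mathcal{E}(u_j(t),\varphi_j(t))$, this forces convergence of $\mathcal{E}(u_j(t),\varphi_j(t))$ to $\mathcal{E}(u(t),\varphi(t))$, hence of each of the two lsc summands above separately. Convergence of $\frac{1}{2}\|u_j(t)\|^2$ together with weak convergence in $G_{div}$ upgrades to $u_j(t)\to u(t)$ in $G_{div}$, and convergence of $\int_\Omega\Psi(\cdot,\varphi_j(t))$ together with weak convergence in $H$ and strict convexity of $\Psi$ gives $\varphi_j(t)\to\varphi(t)$ in $H$; the remaining term $\int_\Omega F(\varphi_j(t))$ then converges to $\int_\Omega F(\varphi(t))$ by difference. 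Therefore $\mathbf{d}(z_j(t),z(t))\to 0$ for every $t\geq 0$, and (H4) is verified.
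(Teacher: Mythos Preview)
Your proposal is correct and follows essentially the same route as the paper: uniform energy bounds, weak/weak-$\ast$ compactness and Aubin--Lions to identify the limit as a weak solution, weak pointwise convergence of $[u_j(t),\varphi_j(t)]$, and then the combination of the energy identity \eqref{idendiffcor} for the limit with lower semicontinuity of the dissipation to force $\mathcal{E}(z_j(t))\to\mathcal{E}(z(t))$ and upgrade to strong convergence in $\mathcal{X}_m$. The only cosmetic difference is the convex decomposition: the paper writes $F(s)=G(x,s)-\big(a(x)-\tfrac{c_0}{2}\big)\tfrac{s^2}{2}$ and separates an explicit $\tfrac{c_0}{4}\|\varphi\|^2$ term, so that the final $L^2$ strong convergence of $\varphi_j(t)$ follows by an elementary algebraic identity rather than by invoking the uniform convexity of your $\Psi$.
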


\begin{proof}
It is immediate to see that $\mathcal{G}$ satisfies (H1)-(H3) of
Definition \ref{defsemgen}. The only property which is not trivial to
prove is (H4). We therefore consider a sequence $\{z_j\}$, with
$z_j:=[u_j,\varphi_j]$, of weak solutions (cf. Definition \ref{wfdfn2})
such that $z_j(0):=[u_{j0},\varphi_{j0}]\to z_0:=[u_0,\varphi_0]$
in $\mathcal{X}_m$. Since every weak solution satisfies the energy
identity, for each $j\in\mathbb{N}$ and for every $t\geq 0$ we can
write
\begin{equation}
\mathcal{E}(z_j(t))+\int_0^t\Big(2\|\sqrt{\nu(\varphi_j)}Du_j\|^2+\|\nabla\mu_j \|^2\Big)d\tau
=\mathcal{E}(z_{j0})
+\int_0^t\langle h,u_j \rangle d\tau,\label{eneq}
\end{equation}
where $z_{j0}:=z_j(0)$. From this identity, by recalling the definition
of the energy functional $\mathcal{E}$ and using (A1)-(A3), (A5)-(A6) we deduce that $\{u_j\}$ is
bounded in $L^{\infty}(0,T; G_{div})\cap L^2(0,T;V_{div})$ for
every $T>0$, $\{\varphi_j\}$ is bounded in $L^{\infty}(0,T;H)\cap
L^2(0,T;V)$ for every $T>0$ and $\{\mu_j\}$ is bounded in
$L^2(0,T;V)$ for every $T>0$ (cf. \cite{CFG} for details). From equations \eqref{wf1} and
\eqref{wf2}, written for each weak solution $[u_j,\varphi_j]$, and
arguing as in \cite{CFG} we also show that $\{u_j'\}$ is bounded in
$L^2(0,T;V_{div}')$ for every $T>0$ and that $\{\varphi_j'\}$ is
bounded in $L^2(0,T;V')$ for every $T>0$. Therefore, we deduce that
there exist $u\in L^{\infty}(0,T; G_{div})\cap L^2(0,T;V_{div})$ for
every $T>0$, $\varphi\in L^{\infty}(0,T;H)\cap L^2(0,T;V)$ for
every $T>0$ and $\mu\in L^2(0,T;V)$ for every $T>0$ such that, for
a subsequence that  we do not relabel, we have
\begin{align}
&u_j\rightharpoonup u\quad
\mbox{weakly}^{\ast}\mbox{ in }L^{\infty}(0,T;G_{div})
\;\mbox{and weakly in }L^2(0,T;V_{div}),\label{conv1}\\
&u_j'\rightharpoonup u'\quad\mbox{weakly in }L^2(0,T;V_{div}'),
\label{conv2}\\
&u_j\to u\quad\mbox{strongly in }L^2(0,T;G_{div}),\label{conv3}\\
&\varphi_j\rightharpoonup\varphi\quad
\mbox{weakly}^{\ast}\mbox{ in }L^{\infty}(0,T;H)
\;\mbox{and weakly in }L^2(0,T;V),\label{conv4}\\
&\varphi_j'\rightharpoonup\varphi'\quad\mbox{weakly in }L^2(0,T;V'),\label{conv5}\\
&\varphi_j\to\varphi\quad\mbox{strongly in }L^2(0,T;H),\label{conv6}\\
&\mu_j\rightharpoonup\mu\quad\mbox{weakly in }L^2(0,T;V).
\label{conv7bis}
\end{align}
From \eqref{conv2} and \eqref{conv5} we obtain
\begin{align}
&u_j(t)\rightharpoonup u(t)\quad\mbox{weakly in }G_{div},
\quad\forall t\geq 0,\label{conv7}\\
&\varphi_j(t)\rightharpoonup\varphi(t)\quad\mbox{weakly in }H,
\quad\forall t\geq 0.\label{conv8}
\end{align}
Indeed, for every $v\in V_{div}$ and every $t\geq 0$, we have
\begin{equation*}
\int_0^T\langle u_j'(\tau)-u'(\tau),v\rangle\chi_{[0,t]}(\tau)d\tau
=(u_j(t)-u(t),v)-(u_{j0}-u_0,v)\to 0\nonumber
\end{equation*}
as $j\to\infty$. Hence $(u_j(t),v)\to(u(t),v)$, for every $v\in
V_{div}$ and every $t\geq 0$ so that \eqref{conv7} follows from the
density of $V_{div}$ in $G_{div}$ and from the boundedness of the
sequence of $u_j$ in $L^{\infty}(0,T;H)$ for every $T>0$. By the
same argument we get \eqref{conv8}. By means of the convergences
above and of the fact that each $z_j$ is a weak solution, by passing to
the limit in the variational formulation for $z_j=[u_j,\varphi_j]$ we
infer that $z=[u,\varphi]$ is a weak solution as well. Furthermore,
from \eqref{conv7} and \eqref{conv8} we get $z(0)=z_0$. We are
now left to prove the convergence in $\mathcal{X}_m$ for each time
$t\geq 0$. In order to do that, let us represent the potential $F$ in the
following form
\begin{equation}
F(s)=G(x,s)-\Big(a(x)-\frac{c_0}{2}\Big)\frac{s^2}{2},\label{pot}
\end{equation}
where, due to (A3), function $G(x,\cdot)$ is strictly convex in
$\mathbb{R}$ for almost every $x\in\Omega$. By means of
\eqref{pot} the energy $\mathcal{E}$ can be rewritten in the form
\begin{equation}
\mathcal{E}(z)=\frac{1}{2}\|u\|^2+\frac{c_0}{4}\|\varphi\|^2-\frac{1}{2}(\varphi,J\ast\varphi)
+\int_{\Omega}G(x,\varphi(x))dx,
\end{equation}
for every $z=[u,\varphi]\in\mathcal{X}_m$.
As a consequence of the weak convergences \eqref{conv7} and \eqref{conv8} we see that we have
\begin{equation}
\liminf_{j\to\infty}\mathcal{E}(z_j(t))\geq\mathcal{E}(z(t)),\quad\forall t\geq 0.
\label{liminfE}
\end{equation}
Indeed \eqref{liminfE} follows from the weak lower semicontinuity in
$H$ of the $L^2-$norm and of the convex integral functional in $G$,
and from the compactness of the convolution operator
$J\ast\cdot:H\to H$ (cf. (A1)). Recall that if we consider the
functional $\mathcal{L}:H\to\mathbb{R}\cup\{+\infty\}$, where
$$\mathcal{L}(\varphi):=\int_{\Omega}G(x,\varphi(x))dx,$$ for
every $\varphi\in H$ such that $G(\cdot,\varphi(\cdot))\in
L^1(\Omega)$ ($\mathcal{L}(\varphi)=+\infty$ otherwise), due to
the convexity of $G(x,\cdot)$ for a.e. $x\in\Omega$ and to the lower
bound $G(x,s)\geq-\alpha s^2-\beta$, for every $s\in\mathbb{R}$
and for some $\alpha,\beta\geq 0$, then $\mathcal{L}$ is weakly
lower-semicontinuous in $H$.
\\
Since each weak solution satisfy the energy equation
\eqref{eneq}, we have
\begin{align}
&\limsup_{j\to\infty}\mathcal{E}(z_j(t))=\mathcal{E}(z_0)-\liminf_{j\to\infty}\int_0^t
\Big(2\|\sqrt{\nu(\varphi_j)}Du_j \|^2+\|\nabla\mu_j \|^2\Big)d\tau+\int_0^t\langle h,u \rangle d\tau
\nonumber\\
&\leq \mathcal{E}(z(0))-\int_0^t\Big(2\|\sqrt{\nu(\varphi)}Du \|^2+\|\nabla\mu \|^2\Big)d\tau
+\int_0^t\langle h,u \rangle d\tau\nonumber\\
&=\mathcal{E}(z(t)),\qquad\forall t\geq 0,
\label{limsupE}
\end{align}
due to \eqref{conv1}, \eqref{conv7bis} and on account of the fact
that, since $z_j(0)\to z_0=z(0)$ in $\mathcal{X}_m$, then
$u_j(0)\to u_0=u(0)$ in $G_{div}$,
$\varphi_j(0)\to\varphi_0=\varphi(0)$ in $H$ and
$\int_{\Omega}F(\varphi_j(0))\to
\int_{\Omega}F(\varphi_0)=\int_{\Omega}F(\varphi(0))$.
We have also used the fact that
\begin{equation}
\sqrt{\nu(\varphi_j)}Du_j\rightharpoonup\sqrt{\nu(\varphi)}Du\qquad\mbox{weakly in }L^2(H).
\label{weakvisc}
\end{equation}
This convergence easily follows from
the uniform bound $\|\sqrt{\nu(\varphi_j)}\|_{\infty}\leq\sqrt{\nu_2}$,
the strong convergence $\sqrt{\nu(\varphi_j)}\to\sqrt{\nu(\varphi)}$ in $L^2(H)$
and the weak convergence \eqref{conv1}  (see \cite{CFG} for details).
Therefore
$\mathcal{E}(z_j(0))\to\mathcal{E}(z(0))$ as $j\to\infty$. Hence,
from \eqref{liminfE} and \eqref{limsupE} we get
\begin{equation}
\mathcal{E}(z_j(t))\to\mathcal{E}(z(t))\quad\mbox{as }j\to\infty,\quad\forall t\geq 0.
\label{limE}
\end{equation}
We know that $(\varphi_j(t), J\ast\varphi_j(t))\to(\varphi(t),
J\ast\varphi(t))$ for every $t\geq 0$. Then \eqref{limE} yield
\begin{align}
&\frac{1}{2}\|u_j(t)\|^2+\frac{c_0}{4}\|\varphi_j(t)\|^2+\int_{\Omega}G(x,\varphi_j(x,t))dx
\to\frac{1}{2}\|u(t)\|^2\nonumber\\
&+\frac{c_0}{4}\|\varphi(t)\|^2+\int_{\Omega}G(x,\varphi(x,t))dx,\quad\mbox{as }j\to\infty,\quad\forall t\geq 0.
\nonumber
\end{align}
Therefore, for all $t\geq 0$, we have (cf. also \eqref{conv7}-
\eqref{conv8})
\begin{align}
&\frac{1}{2}\|u_j(t)-u(t)\|^2+\frac{c_0}{4}\|\varphi_j(t)-\varphi(t)\|^2
+\int_{\Omega}G(x,\varphi_j(x,t))dx-\int_{\Omega}G(x,\varphi(x,t))dx\nonumber\\
&=\frac{1}{2}\|u_j(t)\|^2+\frac{c_0}{4}\|\varphi_j(t)\|^2+\int_{\Omega}G(x,\varphi_j(x,t))dx
-(u_j(t),u(t))\nonumber\\
&-\frac{c_0}{2}(\varphi_j(t),\varphi(t))+\frac{1}{2}\|u(t)\|^2+\frac{c_0}{4}\|\varphi(t)\|^2
-\int_{\Omega}G(x,\varphi(x,t))dx\to 0,
\label{conv}
\end{align}
as $j\to\infty$. Therefore we infer that
\begin{equation}
\limsup_{j\to\infty}\int_{\Omega}G(x,\varphi_j(x,t))dx
\leq\int_{\Omega}G(x,\varphi(x,t))dx,\quad\forall t\geq 0,\nonumber
\end{equation}
and, due to the $H-$weak lower semicontinuity of the integral
functional $\mathcal{L}$, we obtain
\begin{equation}
\int_{\Omega}G(x,\varphi_j(x,t))dx\to\int_{\Omega}G(x,\varphi(x,t))dx,\quad\forall t\geq 0.
\label{convintG}
\end{equation}
From \eqref{conv} and \eqref{convintG} we finally get
\begin{align}
&u_j(t)\to u(t)\quad\mbox{strongly in }G_{div},\quad\forall t\geq 0,\nonumber\\
&\varphi_j(t)\to\varphi(t)\quad\mbox{strongly in }H,\quad\forall t\geq 0,\nonumber
\end{align}
and, on account of \eqref{pot} and \eqref{convintG},  we also have
$$\int_{\Omega}F(\varphi_j(t))\to\int_{\Omega}F(\varphi(t)),\qquad\forall t\geq 0.$$
Hence $z_j(t)\to z(t)$ in $\mathcal{X}_m$, for every $t\geq 0$. We
thus conclude that (H4) holds.
\end{proof}

As a consequence of \eqref{dissest} we have the following

\begin{prop}
Let the hypotheses of Proposition \ref{gensem} hold. Then
$\mathcal{G}$ is point dissipative and eventually bounded.
\label{pointdiss}
\end{prop}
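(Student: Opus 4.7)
The plan is to derive both point dissipativity and eventual boundedness as direct consequences of the dissipative estimate \eqref{dissest}, after checking that it applies uniformly to every $z\in\mathcal{G}$ with initial datum in $\mathcal{X}_m$ and matching energy bounds with $\mathbf{d}$-bounds.

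First I would check that \eqref{dissest} applies to every $z\in\mathcal{G}$. Since $h\in V_{div}'$ is time independent, $\|h\|_{L^2_{tb}(0,\infty;V_{div}')}=\|h\|_{V_{div}'}$ is finite, and by the $d=2$ part of Theorem \ref{exist} \emph{every} weak solution in the sense of Definition \ref{wfdfn2}, hence every $z\in\mathcal{G}$, satisfies \eqref{dissest} for all $t\geq 0$. Moreover, for $z_0=[u_0,\varphi_0]\in\mathcal{X}_m$ the total mass $m_0=(\varphi_0,1)$ satisfies $|m_0|\leq m$ by \eqref{phsp}, so $F(m_0)\leq C(m):=\max_{|s|\leq m}F(s)$. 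Thus \eqref{dissest} takes the uniform form
\begin{equation*}
\mathcal{E}(z(t))\leq \mathcal{E}(z_0)\,e^{-kt}+C(m)|\Omega|+K,\qquad\forall t\geq 0,
\end{equation*}
for every $z\in\mathcal{G}$ with $z(0)\in\mathcal{X}_m$, with an asymptotic floor depending only on $m$ and the data.

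Next I would convert between bounds on $\mathcal{E}$ and bounds in $(\mathcal{X}_m,\mathbf{d})$. Assumption (A4) is easily derived from (A6): the latter forces $F(s)$ to grow like $|s|^{2q+2}$ at infinity, so a Young-type adjustment secures $F(s)\geq c_1 s^2-c_2$ for any prescribed $c_1$, in particular with $c_1>\tfrac{1}{2}\|J\|_{L^1(\mathbb{R}^d)}$. Combining this with $a\geq 0$ and $|(J\ast\varphi,\varphi)|\leq\|J\|_{L^1(\mathbb{R}^d)}\|\varphi\|^2$ yields
\begin{equation*}
\mathcal{E}(u,\varphi)\geq\frac{1}{2}\|u\|^2+\Bigl(c_1-\frac{1}{2}\|J\|_{L^1(\mathbb{R}^d)}\Bigr)\|\varphi\|^2-c_2|\Omega|,
\end{equation*}
so an upper bound on $\mathcal{E}(z)$ controls $\|u\|$, $\|\varphi\|$, and hence $|\int_\Omega F(\varphi)|$ (bounded below by $-c_2|\Omega|$ and above via $\mathcal{E}+\tfrac{1}{2}\|J\|_{L^1(\mathbb{R}^d)}\|\varphi\|^2$); therefore it controls $\mathbf{d}(z,z_\ast)$ for any fixed reference $z_\ast\in\mathcal{X}_m$. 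Conversely, $\mathbf{d}$-boundedness of a set $B\subset\mathcal{X}_m$ trivially yields $M(B):=\sup_{z_0\in B}\mathcal{E}(z_0)<\infty$, since each of the three terms defining $\mathcal{E}$ is controlled by $\|u\|$, $\|\varphi\|$ and $\int_\Omega F(\varphi)$.

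Combining these two ingredients, point dissipativity follows by letting $\mathcal{B}_0\subset\mathcal{X}_m$ be the $\mathbf{d}$-ball corresponding to the threshold $\mathcal{E}(z)\leq C(m)|\Omega|+K+1$: for every $z\in\mathcal{G}$, the displayed inequality above forces $z(t)\in\mathcal{B}_0$ for all $t\geq t_0(z)$, once the transient $\mathcal{E}(z_0)\,e^{-kt}$ has decayed below $1$. Eventual boundedness is even simpler: for any bounded $B\subset\mathcal{X}_m$ and any $z\in\mathcal{G}$ with $z(0)\in B$, the uniform dissipative bound gives $\mathcal{E}(z(t))\leq M(B)+C(m)|\Omega|+K$ for all $t\geq 0$, and the coercive lower bound above translates this into $\mathbf{d}$-boundedness of $\gamma^+(B)$, so $\tau=0$ works. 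No real obstacle is anticipated; the crux is organizational and hinges on the strict inequality $c_1>\tfrac{1}{2}\|J\|_{L^1(\mathbb{R}^d)}$ that guarantees coercivity of $\mathcal{E}$ against the nonlocal term.
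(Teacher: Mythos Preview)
Your proposal is correct and follows essentially the same route as the paper: both arguments start from the dissipative estimate \eqref{dissest} (valid for \emph{every} $z\in\mathcal{G}$ by Theorem~\ref{exist}), establish coercivity of $\mathcal{E}$ from (A6) in order to translate energy bounds into $\mathbf{d}$-bounds, and then read off point dissipativity and eventual boundedness. The paper carries out the translation more explicitly, shifting to $\overline{\mathcal{E}}=\mathcal{E}+\gamma\geq 0$ and arriving at an inequality of the form $\mathbf{d}^2(z(t),0)\leq c_{13}\overline{\mathcal{E}}(z_0)e^{-kt}+c_{13}|L_m|+c_{14}$, whereas you argue the equivalence of $\mathcal{E}$-bounds and $\mathbf{d}$-bounds once and for all; your use of $C(m)=\max_{|s|\leq m}F(s)$ is in fact slightly more careful than the paper's $F(m)$, and your observation that one may take $\tau=0$ for eventual boundedness (since \eqref{dissest} bounds $\mathcal{E}(z(t))$ uniformly for all $t\geq 0$ once $z_0\in B$) is a nice simplification. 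The only cosmetic point to tidy up is that $\mathcal{E}(z_0)e^{-kt}$ need not be bounded by $M(B)$ when $\mathcal{E}(z_0)<0$; replacing $M(B)$ by $\max(M(B),0)$ (or shifting $\mathcal{E}$ as the paper does) fixes this.
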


\begin{proof}
Due to (A6) there exists
$\gamma=\gamma(c_7,c_8,J,|\Omega|)\geq 0$ such that
$\mathcal{E}(z)\geq -\gamma$ for every $z\in\mathcal{X}_m$.
Therefore, setting
$\overline{\mathcal{E}}(z):=\mathcal{E}(z)+\gamma\geq 0$,
from \eqref{dissest} we deduce
\begin{align}
&\frac{1}{2}\|u(t)\|^2+\frac{1}{2}\|\sqrt{a}\varphi(t)\|^2-\frac{1}{2}(J\ast\varphi(t),\varphi(t))
+\int_{\Omega}F(\varphi(t))\nonumber\\
&\leq\overline{\mathcal{E}}(z_0)e^{-kt}+L_m,\qquad\forall t\geq 0,
\label{gh}
\end{align}
where $z_0:=[u_0,\varphi_0]$ and $L_m=F(m)|\Omega|+K$.
Now, by using (A6) again we have
\begin{equation}
\frac{1}{2}\|\sqrt{a}\varphi\|^2-\frac{1}{2}(J\ast\varphi,\varphi)
+\int_{\Omega}F(\varphi)\geq c_9\|\varphi\|^2-\gamma,\nonumber
\end{equation}
and therefore
\begin{equation}
\frac{1}{2}\|u(t)\|^2+c_9\|\varphi(t)\|^2\leq\overline{\mathcal{E}}(z_0)e^{-kt}+L_m+\gamma.\label{gh1}
\end{equation}
From \eqref{gh} we infer
\begin{align}
&
\frac{1}{2}\|u(t)\|^2+\int_{\Omega}F(\varphi(t))\leq\overline{\mathcal{E}}(z_0)e^{-kt}+L_m
+c_{10}\|\varphi(t)\|^2\leq c_{11}\overline{\mathcal{E}}(z_0)e^{-kt}+c_{11}L_m+c_{12},
\label{gh2}
\end{align}
where $c_{10}=\frac{1}{2}\|J\|_{L^1}$, $c_{11}=1+c_{10}/c_9$ and $c_{12}=\gamma c_{10}/c_9$.
Therefore, \eqref{gh1} and \eqref{gh2} entail
\begin{align}
&
\|u(t)\|^2+\|\varphi(t)\|^2+\Big|\int_{\Omega}F(\varphi(t))-\int_{\Omega}F(0)\Big|\leq
c_{13}\overline{\mathcal{E}}(z_0)e^{-kt}+c_{13}|L_m|+c_{14},
\label{ok}
\end{align}
for all $t\geq 0$. Here the expressions of the positive constants
$c_{13}$ and $c_{14}$ in terms of the previous constants are
omitted for the sake of simplicity. Setting $z(t):=[u(t),\varphi(t)]$,
\eqref{ok} can be rewritten as follows
\begin{equation}
\mathbf{d}^2(z(t),0)\leq c_{13}\overline{\mathcal{E}}(z_0)e^{-kt}+c_{13}|L_m|+c_{14},\qquad\forall t\geq 0.
\label{ik}
\end{equation}
Choosing therefore $R_0$ such that $R_0^2>c_{13}|L_m|+c_{14}$, from \eqref{ik}
we deduce that
$$\mathbf{d}(z(t),0)\leq R_0,$$
for every $t\geq t_0(z_0)$, where
$$t_0=\frac{1}{k}\log\frac{c_{13}\overline{\mathcal{E}}(z_0)}{R_0^2-(c_{13}|L_m|+c_{14})},$$
which means that $\mathcal{G}$ is point dissipative. By using a similar argument,
\eqref{ok} implies that $\mathcal{G}$ is also eventually bounded.
\end{proof}

We can now prove our main result.

\begin{thm}
Let the hypotheses of Proposition \ref{gensem} hold. Then
$\mathcal{G}$ possesses a global attractor.
\end{thm}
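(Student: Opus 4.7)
The strategy is to chain together the abstract machinery already assembled in this section. By Proposition \ref{gensem}, $\mathcal{G}$ is a generalized semiflow, and by Proposition \ref{pointdiss} it is point dissipative and eventually bounded. Theorem \ref{attractor} therefore reduces the existence of a global attractor to the asymptotic compactness of $\mathcal{G}$, and in view of Proposition \ref{asymptcompact} (combined with eventual boundedness) it suffices to check that $\mathcal{G}$ is compact in the sense recalled at the beginning of this section. This is the only nontrivial point, and the bulk of the argument would mirror the proof of (H4) in Proposition \ref{gensem}, with one essential modification at the level of convergence of initial data.

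Concretely, I would start from a sequence $\{z_j\}\subset\mathcal{G}$, $z_j=[u_j,\varphi_j]$, with $\{z_j(0)\}$ only \emph{bounded} in $\mathcal{X}_m$. The energy equality \eqref{idendiffcor} written for each $z_j$ together with $\|h\|_{V_{div}'}<\infty$ yields uniform bounds on $\{u_j\}$, $\{\varphi_j\}$, $\{\mu_j\}$ and their time derivatives in exactly the function spaces appearing in the proof of Proposition \ref{gensem}. Standard weak and Aubin-Lions compactness then give, up to a subsequence, the convergences \eqref{conv1}--\eqref{conv7bis} together with a limit $z=[u,\varphi]$ that is a weak solution in the sense of Definition \ref{wfdfn2}, and one deduces as in \eqref{conv7}--\eqref{conv8} the weak pointwise convergences $u_j(t)\rightharpoonup u(t)$ in $G_{div}$ and $\varphi_j(t)\rightharpoonup \varphi(t)$ in $H$ for \emph{every} $t\geq 0$.

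The new point is to upgrade to strong convergence in $\mathcal{X}_m$ at every $t>0$ without having strong convergence of $z_j(0)$ to $z(0)$. From the strong convergences \eqref{conv3} and \eqref{conv6} and a further diagonal extraction, there is a set $\mathcal{S}\subset(0,\infty)$ of full measure on which $u_j(s)\to u(s)$ in $G_{div}$, $\varphi_j(s)\to\varphi(s)$ in $H$ and a.e.\ in $\Omega$. Combining the uniform bound $\varphi_j\in L^\infty(0,T;L^{2+2q}(\Omega))$ granted by Theorem \ref{exist} under (A6) with the polynomial growth \eqref{growth} of $F$ forced by (A5), I would apply Vitali's theorem to obtain $\int_\Omega F(\varphi_j(s))\to\int_\Omega F(\varphi(s))$ for every $s\in\mathcal{S}$, which in turn yields $\mathcal{E}(z_j(s))\to \mathcal{E}(z(s))$ on $\mathcal{S}$. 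Now fix $t>0$ and pick $s\in\mathcal{S}\cap(0,t)$; applying the energy equality \eqref{idendiffcor} on $[s,t]$ to each $z_j$ and to $z$, passing to the $\limsup$ as $j\to\infty$ and using \eqref{conv1}, \eqref{conv7bis}, \eqref{weakvisc} to control the dissipation from below, I get
\begin{equation*}
\limsup_{j\to\infty}\mathcal{E}(z_j(t))
\leq \mathcal{E}(z(s))-\int_s^t\Big(2\|\sqrt{\nu(\varphi)}Du\|^2+\|\nabla\mu\|^2\Big)d\tau+\int_s^t\langle h,u\rangle d\tau
=\mathcal{E}(z(t)).
\end{equation*}
Matched with the weak lower semicontinuity bound $\liminf_{j\to\infty}\mathcal{E}(z_j(t))\geq \mathcal{E}(z(t))$ - obtained via the representation \eqref{pot} of $F$ and the compactness of $J\ast\,\cdot\,:H\to H$, exactly as in Proposition \ref{gensem} - this gives $\mathcal{E}(z_j(t))\to\mathcal{E}(z(t))$ for every $t>0$. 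The final step, converting convergence of the energies plus weak convergence of $(u_j(t),\varphi_j(t))$ into strong convergence in $\mathcal{X}_m$, is then identical to \eqref{conv}--\eqref{convintG} in the proof of Proposition \ref{gensem}.

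The main obstacle I expect is the convergence $\int_\Omega F(\varphi_j(s))\to\int_\Omega F(\varphi(s))$ for a.e.\ $s>0$: the extreme case $p'=2+2q$ is admissible (for instance the double-well $F(s)=(s^2-1)^2$ yields $p'=4$ and the largest admissible $q$ in (A6) is $1$), so one cannot hope to gain equi-integrability by plain interpolation to a Lebesgue exponent strictly above the growth of $F$. The right ingredient is the combination of the a.e.\ convergence produced by the Aubin-Lions step, the uniform $L^{2+2q}$ bound from Theorem \ref{exist}, and the growth control \eqref{growth}, which together provide just enough equi-integrability for Vitali's theorem. Once this is in place the rest of the argument is a routine propagation via the 2D energy equality.
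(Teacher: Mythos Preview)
Your overall architecture is exactly the paper's: reduce to compactness via Proposition~\ref{asymptcompact}, extract a limit weak solution from the bounds delivered by the energy identity, prove $\mathcal{E}(z_j(t))\to\mathcal{E}(z(t))$ for every $t>0$, and then recycle the \eqref{conv}--\eqref{convintG} argument to upgrade to convergence in $\mathcal{X}_m$. The differences are in two details.

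First, your route to $\int_\Omega F(\varphi_j(s))\to\int_\Omega F(\varphi(s))$ has the gap you yourself flagged but did not actually close. In the critical case $p'=2+2q$ the uniform bound $\varphi_j\in L^\infty(0,T;L^{2+2q}(\Omega))$ only tells you that $|\varphi_j(s)|^{2+2q}$ is \emph{bounded} in $L^1(\Omega)$, not equi-integrable; a.e.\ convergence plus $L^1$-boundedness is not enough for Vitali. The paper avoids this by exploiting the $L^2(0,T;V)$ bound instead: since $d=2$, the embedding $V\hookrightarrow\hookrightarrow L^{p'}(\Omega)$ is compact for \emph{every} finite $p'$, so Aubin--Lions gives $\varphi_j\to\varphi$ strongly in $L^2(0,T;L^{p'}(\Omega))$ (this is exactly \eqref{Aubin-Lions}), hence $\varphi_j(s)\to\varphi(s)$ strongly in $L^{p'}(\Omega)$ for a.e.\ $s$, and the polynomial growth \eqref{growth} then yields $\int_\Omega F(\varphi_j(s))\to\int_\Omega F(\varphi(s))$ directly. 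No recourse to the $L^{2+2q}$ bound is needed for this step.

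Second, to pass from a.e.\ $t$ to every $t>0$ the paper uses a slightly slicker device than your pick-an-$s$ trick: it sets $\widetilde{\mathcal{E}}(z(t)):=\mathcal{E}(z(t))-\int_0^t\langle h,u\rangle\,d\tau$, observes that $\widetilde{\mathcal{E}}(z_j(\cdot))$ is nonincreasing for each $j$ by the energy identity, and that $\widetilde{\mathcal{E}}(z(\cdot))$ is continuous; a nonincreasing sequence converging a.e.\ to a continuous limit converges everywhere on $(0,\infty)$. Your argument via the energy equality on $[s,t]$ reaches the same conclusion and is perfectly valid once the a.e.\ convergence is secured, so this is a stylistic difference rather than a mathematical one.
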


\begin{proof}

By Proposition \ref{pointdiss} we know that the generalized semiflow
$\mathcal{G}$ is point dissipative. Since, again by Proposition
\ref{pointdiss}, $\mathcal{G}$ is also eventually bounded, according
with Theorem \ref{attractor}, we only need to show that
$\mathcal{G}$ is compact (see also Proposition
\ref{asymptcompact}). Let us first observe that the compact
embedding $V\hookrightarrow\hookrightarrow L^{p'}(\Omega)$
and the Aubin-Lions lemma imply
\begin{equation}
\label{Aubin-Lions}
L^2(0,T;V)\cap H^1(0,T,V')\hookrightarrow
\hookrightarrow L^2(0,T;L^{p'}(\Omega)).
\end{equation}
Therefore, from \eqref{conv4} and \eqref{conv5} we deduce that, for
a subsequence that we do not relabel, we have
$$\varphi_j\to\varphi,\qquad\mbox{strongly in }L^2(0,T;L^{p'}(\Omega)),$$
and hence, for a further subsequence, $\varphi_j(t)\to\varphi(t)$
strongly in $L^{p'}(\Omega)$ for a.e. $t\in(0,T)$. Since $F$ has
polynomial growth of order $p'$ (cf. Remark \ref{Fgrowth}), then by
Lebesgue's theorem we deduce
\begin{equation}
\int_{\Omega}F(\varphi_j(t))\to\int_{\Omega}F(\varphi(t)),\qquad\mbox{a.e. }t>0.
\label{Fconv}
\end{equation}
Hence, the strong convergences \eqref{conv3}, \eqref{conv6}, which
imply that for a subsequence we have
\begin{align}
&u_j(t)\to u(t)\quad\mbox{strongly in }G_{div},\quad\mbox{a.e. }t>0,\\
&\varphi_j(t)\to\varphi(t)\quad\mbox{strongly in }H,\quad\mbox{a.e. }t>0,
\end{align}
and \eqref{Fconv} allow to deduce that $\mathcal{E}(z_j(t))\to\mathcal{E}(z(t))$
for almost all $t>0$.
Now, setting
$$\widetilde{\mathcal{E}}(z(t)):=\mathcal{E}(z(t))-\int_0^t\langle h,u(\tau)\rangle d\tau,$$
we still have
$\widetilde{\mathcal{E}}(z_j(t))\to\widetilde{\mathcal{E}}(z(t))$
for almost all $t>0$. Since for each $j$ the function
$\widetilde{\mathcal{E}}(z_j(\cdot))$ is decreasing on $[0,\infty)$
and $\widetilde{\mathcal{E}}(z(\cdot))$ is continuous on
$[0,\infty)$, then
$\widetilde{\mathcal{E}}(z_j(t))\to\widetilde{\mathcal{E}}(z(t))$
for {\it all} $t>0$. Hence
\begin{equation}
\mathcal{E}(z_j(t))\to\mathcal{E}(z(t)),\qquad\forall t>0.
\label{convmathcalE}
\end{equation}
Now, by means of the same argument used to deduce (H4), from
\eqref{convmathcalE} we infer that $z_j(t)\to z(t)$ in
$\mathcal{X}_m$, for all $t>0$. Thus $\mathcal{G}$ is compact.
\end{proof}

\begin{oss}
{\upshape
In the nonautonomous case (say, $h$ depending on time) it would be interesting to establish the
existence of a pullback attractor along the lines of \cite{MRR}
(see also its references), where uniqueness also fails but energy identity holds.
}
\end{oss}


\section{The convective nonlocal Cahn-Hilliard equation}\setcounter{equation}{0}
\label{sec4}

Here we show that the existence of the global attractor for
\eqref{sy1}-\eqref{sy2}, assuming that $u\in L^{\infty}(\Omega)^d$ is given
and independent of time for $d=2,3$, can be
proven arguing as in the previous section.

First, recalling Corollary \ref{NLCH1}, we prove a uniqueness result.

\begin{prop}
\label{NLCH2} Let $u\in L^2(0,T;L^\infty(\Omega)^d\cap
V_{div})$ be given and let $\varphi_0\in H$ be such that
$F(\varphi_0)\in L^1(\Omega)$. Suppose also that (A1), (A3), (A5)
and (A6) (with $q \geq \frac{1}{2}$ if $d=3$) are satisfied. Then,
there exists a unique weak solution $\varphi \in L^2(0,T;V)\cap
H^1(0,T;V^\prime)$ to \eqref{chempot}-\eqref{wf1} on $(0,T)$
such that $\varphi(0)=\varphi_0$.
\end{prop}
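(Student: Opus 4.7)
The plan is the standard energy method for uniqueness of Cahn--Hilliard-type equations, performed in the dual space $V'_0$ of zero-mean $V'$ via the inverse Neumann Laplacian. Let $\varphi_1,\varphi_2$ be two weak solutions sharing the same initial datum, set $\varphi:=\varphi_1-\varphi_2$ and $\mu:=a\varphi-J\ast\varphi+F'(\varphi_1)-F'(\varphi_2)$. Conservation of mass (Remark \ref{mass}) gives $\int_\Omega\varphi(t)=0$ for every $t$, so $\mathcal{N}\varphi$ is well-defined, where $\mathcal{N}$ denotes the inverse of $-\Delta$ with homogeneous Neumann boundary conditions acting on the zero-mean subspace of $H$. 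The quantity $\|\varphi\|_{-1}^2:=(\varphi,\mathcal{N}\varphi)=\|\nabla\mathcal{N}\varphi\|^2$ is an equivalent norm on this subspace and, thanks to the regularity $\varphi\in L^2(0,T;V)\cap H^1(0,T;V')$, we have $\frac{1}{2}\frac{d}{dt}\|\varphi\|_{-1}^2=\langle\varphi_t,\mathcal{N}\varphi\rangle$.

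Subtracting the two weak formulations and testing with $\psi=\mathcal{N}\varphi\in V$, using $\partial_n\mathcal{N}\varphi=0$ to integrate by parts the term $(\nabla\mu,\nabla\mathcal{N}\varphi)=(\mu,\varphi)$, one obtains
$$\frac{1}{2}\frac{d}{dt}\|\varphi\|_{-1}^2+(\mu,\varphi)=(u\varphi,\nabla\mathcal{N}\varphi).$$
The coercivity needed on the potential comes from (A3): writing $F'(\varphi_1)-F'(\varphi_2)=\bigl(\int_0^1 F''(\varphi_2+s\varphi)\,ds\bigr)\varphi$ by the mean value theorem, one finds
$$\bigl(a\varphi+F'(\varphi_1)-F'(\varphi_2),\varphi\bigr)=\int_\Omega\Bigl(a(x)+\int_0^1 F''(\varphi_2+s\varphi)\,ds\Bigr)\varphi^2\,dx\geq c_0\|\varphi\|^2.$$

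For the convolution we use (A1): $J\ast\varphi\in V$ and integration by parts gives $(J\ast\varphi,\varphi)=(J\ast\varphi,-\Delta\mathcal{N}\varphi)=(\nabla(J\ast\varphi),\nabla\mathcal{N}\varphi)$, so by Young's convolution inequality
$$|(J\ast\varphi,\varphi)|\leq\|\nabla J\|_{L^1}\|\varphi\|\,\|\varphi\|_{-1}\leq\frac{c_0}{4}\|\varphi\|^2+C\|\varphi\|_{-1}^2.$$
The transport term is bounded similarly by
$$|(u\varphi,\nabla\mathcal{N}\varphi)|\leq\|u\|_{L^\infty}\|\varphi\|\,\|\varphi\|_{-1}\leq\frac{c_0}{4}\|\varphi\|^2+C\|u\|_{L^\infty}^2\|\varphi\|_{-1}^2.$$
Collecting the estimates and absorbing the $\|\varphi\|^2$ contributions into the coercive term yields
$$\frac{d}{dt}\|\varphi\|_{-1}^2+c_0\|\varphi\|^2\leq C\bigl(1+\|u\|_{L^\infty}^2\bigr)\|\varphi\|_{-1}^2,$$
whose right-hand side is integrable on $(0,T)$ since $u\in L^2(0,T;L^\infty(\Omega)^d)$. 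As $\varphi(0)=0$, Gronwall's lemma gives $\|\varphi(t)\|_{-1}\equiv 0$, and zero mean forces $\varphi\equiv 0$.

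The main obstacle I expect is the convolution term: a naive Cauchy--Schwarz only yields $|(J\ast\varphi,\varphi)|\leq\|J\|_{L^1}\|\varphi\|^2$, which cannot be absorbed into $c_0\|\varphi\|^2$ unless one assumes $c_0>\|J\|_{L^1}$ (an assumption not made here). The trick is to trade one power of $\|\varphi\|$ for a power of $\|\varphi\|_{-1}$ by moving a derivative onto $\mathcal{N}\varphi$, which is exactly what the $W^{1,1}$ regularity of $J$ in (A1) makes possible. A secondary technical point is the justification of the chain rule for $\|\varphi\|_{-1}^2$ and the admissibility of $\mathcal{N}\varphi$ as a test function, both handled by the time regularity guaranteed by Corollary \ref{NLCH1} and the self-adjointness of $\mathcal{N}$.
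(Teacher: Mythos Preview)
Your proof is correct and follows essentially the same route as the paper: test the difference equation with the inverse Neumann Laplacian applied to $\varphi$, use (A3) for the coercive term, and---crucially---handle the convolution by trading one factor of $\|\varphi\|$ for $\|\varphi\|_{-1}$ via the $W^{1,1}$ regularity of $J$, then close with Gronwall. The paper's treatment of the convolution term is phrased as $|(J\ast\varphi,\varphi)|\le\|B_N^{1/2}(J\ast\varphi)\|\,\|B_N^{-1/2}\varphi\|$, which is the same trick you describe by integration by parts onto $\nabla\mathcal{N}\varphi$.
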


\begin{proof}
Suppose that $\varphi_i$, $i=1,2$, are two weak solutions and set
$\varphi=\varphi_1 - \varphi_2$. Then we have
\begin{equation}
\label{NLCH3}
\langle\varphi_t,\psi\rangle
+(\nabla\mu,\nabla\psi)= (u,\varphi\nabla\psi), \qquad\forall\,\psi\in V,
\end{equation}
where (cf. \ref{chempot})
\begin{equation*}
\mu=a\varphi-J\ast\varphi+F'(\varphi_1) - F'(\varphi_2).
\end{equation*}
Note that $(\varphi,1)=0$. Then consider the operator $B_N:=-\Delta$
with domain
$$
D(B_N)=\left\{\phi\in H^2(\Omega)\,:\, \frac{\partial\phi}{\partial n}=0
\; \text{on}\; \partial\Omega\right\}.
$$
and take $\psi=B^{-1}_N\varphi(t)\in D(B_N)$ as test function in
\eqref{NLCH3}. Thus we obtain
\begin{equation*}
\frac{d}{dt} \Vert B_N^{-1/2}\varphi\Vert^2
+ 2(a\varphi-J\ast\varphi+F'(\varphi_1) - F'(\varphi_2),\varphi)=
(u,\varphi\nabla B_N^{-1}\varphi)
\end{equation*}
and, thanks to (A3), we get
\begin{equation}
\label{NLCH5}
\frac{d}{dt} \Vert B_N^{-1/2}\varphi\Vert^2
+ 2c_0\Vert \varphi\Vert^2 \leq 2 \vert ( J\ast\varphi,\varphi)\vert
+ C_1\Vert u\Vert_{L^\infty(\Omega)^d}\Vert \varphi\Vert
\Vert B_N^{-1/2}\varphi\Vert.
\end{equation}
On the other hand, recalling (A1) and using Young's inequality,we
have
\begin{equation}
\label{NLCH6}
\vert ( J\ast\varphi,\varphi)\vert \leq
\Vert B_N^{1/2}(J\ast\varphi)\Vert \Vert B_N^{-1/2}\varphi\Vert
\leq \frac{c_0}{4}\Vert \varphi\Vert^2 +
C_2\Vert B_N^{-1/2}\varphi\Vert^2,
\end{equation}
where $C_2>0$ depends on $c_0$ and on $J$. Then, combining
\eqref{NLCH5} with \eqref{NLCH6} and using once more the Young
inequality, the standard Gronwall lemma entails that $\varphi\equiv
0$.
\end{proof}

A consequence of Corollary \ref{NLCH1} and Proposition
\ref{NLCH2} is that we can define a semiflow $S(t)$ on
$\mathcal{Y}_m$ (cf. \eqref{phsp}) endowed the metric
\begin{equation*}
\bar{\mathbf{d}}(\varphi_1,\varphi_2)=
\|\varphi_1-\varphi_2\|+\Big|\int_{\Omega}
F(\varphi_1)-\int_{\Omega}F(\varphi_2)\Big|^{1/2},
\quad \forall\,\varphi_1,\varphi_2 \in \mathcal{Y}_m,
\end{equation*}
where $m\geq 0$ is given.

We can now prove

\begin{thm}
\label{NLCHattr}
Suppose that (A1), (A3), (A5)
and (A6) (with $q \geq \frac{1}{2}$ if $d=3$) are satisfied
and assume $u\in L^{\infty}(\Omega)^d$ is given and independent of time.
In addition, if $d=3$, suppose that
$p^\prime \in [2,6)$ in \eqref{growth}. Then the dynamical system
$(\mathcal{Y}_m,S(t))$ possesses a connected global attractor.
\end{thm}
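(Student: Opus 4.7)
The plan is to exploit uniqueness (Proposition \ref{NLCH2}) together with existence and the energy identity (Corollary \ref{NLCH1}) to define a genuine (single-valued) semigroup $S(t)$ on $(\mathcal{Y}_m,\bar{\mathbf{d}})$ and then verify the three standard ingredients that yield a \emph{connected} global attractor: continuity of $S(t)$, existence of a bounded absorbing set, and asymptotic compactness, together with path-connectedness of the phase space.

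Continuity of $S(t)$ in the $\bar{\mathbf{d}}$-metric is obtained by combining the Lipschitz estimate on $\|B_N^{-1/2}(\varphi_1-\varphi_2)\|$ already established in the proof of Proposition \ref{NLCH2} (which gives continuity in $V'$) with the argument used to establish (H4) in Proposition \ref{gensem}: the energy identity \eqref{energyCH}, the convex decomposition \eqref{pot} of the potential, and weak lower semicontinuity upgrade $V'$-convergence to convergence of the full $\bar{\mathbf{d}}$-metric at every $t\ge 0$. For the absorbing set, I would start from \eqref{energyCH} and bound the right-hand side by Young's inequality,
$$
(u\varphi,\nabla\mu)\le\tfrac12\|\nabla\mu\|^2+\tfrac12\|u\|_{L^\infty(\Omega)^d}^2\|\varphi\|^2.
$$
Thanks to (A6), one has $\int_\Omega F(\varphi)\ge c\|\varphi\|_{L^{2+2q}}^{2+2q}-C$, so $\|\varphi\|^2$ is controlled by the Cahn--Hilliard energy $\mathcal{E}_{CH}(\varphi):=\tfrac14\int_\Omega\!\int_\Omega J(x-y)(\varphi(x)-\varphi(y))^2dxdy+\int_\Omega F(\varphi)$ up to a constant depending on $m$ and $|\Omega|$. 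Inserting this into the energy identity and applying the generalized integral Gronwall lemma of Section \ref{sec5} produces a dissipative bound of the form $\mathcal{E}_{CH}(\varphi(t))\le\mathcal{E}_{CH}(\varphi_0)e^{-kt}+K(m,\|u\|_{L^\infty(\Omega)^d})$, hence a bounded absorbing set $\mathcal{B}_0\subset\mathcal{Y}_m$.

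For asymptotic compactness I would take a bounded sequence $\varphi_{0,j}\in\mathcal{Y}_m$ and $t_j\to\infty$, fix $T>0$, and translate time by setting $\psi_j(s):=S(t_j-T+s)\varphi_{0,j}$, $s\in[0,T]$. The absorbing property and the standard parabolic estimates give uniform bounds for $\psi_j$ in $L^2(0,T;V)\cap H^1(0,T;V')\cap L^\infty(0,T;L^{2+2q}(\Omega))$. Aubin--Lions, combined with the compact embedding $V\hookrightarrow\hookrightarrow L^{p'}(\Omega)$ (which in three dimensions requires precisely $p'<6$), yields strong convergence of a subsequence in $L^2(0,T;H)\cap L^2(0,T;L^{p'}(\Omega))$ and hence a.e.\ pointwise strong convergence in these spaces; the growth bound \eqref{growth} and dominated convergence then give $\int_\Omega F(\psi_j(t))\to\int_\Omega F(\psi(t))$ a.e. Following the monotonicity-in-$t$ device of Section \ref{sec3} applied to the non-increasing quantity $\tilde{\mathcal{E}}(\psi(t)):=\mathcal{E}_{CH}(\psi(t))-\int_0^t(u\psi,\nabla\mu)d\tau$, a.e.\ convergence of $\mathcal{E}_{CH}(\psi_j(t))$ is upgraded to convergence for \emph{every} $t\in(0,T]$, so $\psi_j(T)=S(t_j)\varphi_{0,j}$ converges in $\bar{\mathbf{d}}$. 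Existence of the global attractor then follows from the classical theorem for continuous semigroups. Connectedness is automatic: $\mathcal{Y}_m$ is star-shaped around $0$ via $\lambda\mapsto\lambda\varphi_0$, $\lambda\in[0,1]$, a path along which $|(\cdot,1)|\le m$ is preserved and, by the polynomial growth \eqref{growth} together with $\varphi_0\in L^{p'}(\Omega)$ (a consequence of $F(\varphi_0)\in L^1$ and (A6)), $\int_\Omega F(\cdot)$ remains finite; hence $\mathcal{Y}_m$ is path-connected and $S(t)$ is continuous.

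The subtlest step is the dissipative estimate. Since we do not have a direct Poincar\'e-type inequality binding $\|\varphi\|^2$ to $\|\nabla\mu\|^2$, closing the loop after absorbing half of the dissipation requires the $L^{2+2q}$-coercivity of (A6) together with the generalized integral Gronwall lemma of Section \ref{sec5}. The second delicate point is the requirement $p'<6$ in three dimensions: it is precisely what is needed to apply the compact Sobolev embedding in the asymptotic compactness step when passing to the limit in the nonlinearity $F(\varphi)$.
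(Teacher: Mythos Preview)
Your proof follows the same core strategy as the paper: both rely on the energy identity \eqref{energyCH}, the compact embedding $V\hookrightarrow\hookrightarrow L^{p'}(\Omega)$ via Aubin--Lions (which pinpoints the restriction $p'<6$ when $d=3$), and the monotonicity-in-$t$ device applied to $\tilde{\mathcal{E}}$ to upgrade almost-everywhere convergence of the energy to convergence for every $t>0$. The paper packages this within Ball's generalized semiflow framework, simply saying that the argument of Section~\ref{sec3} (point dissipativity plus compactness) can be adapted; you instead reorganize it into the classical continuity/absorbing-set/asymptotic-compactness scheme for single-valued semigroups, with the time-translation trick for asymptotic compactness. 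The substance is the same, and your derivation of the absorbing set via the integral Gronwall lemma is a legitimate way to make explicit what the paper leaves implicit.

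For connectedness the paper takes a different and shorter route: since $S(t)$ is single-valued by Proposition~\ref{NLCH2}, the Kneser property is trivially satisfied, and connectedness of the attractor follows from this. Your star-shapedness argument for $\mathcal{Y}_m$ is also viable, but the justification contains a slip: you claim $\varphi_0\in L^{p'}(\Omega)$ follows from $F(\varphi_0)\in L^1(\Omega)$ and (A6), yet (A6) yields only $\varphi_0\in L^{2+2q}(\Omega)$, and comparing the lower bound from (A6) with the upper bound \eqref{growth} forces $2+2q\le p'$, so this is not enough to invoke \eqref{growth} on $\lambda\varphi_0$. The fix is immediate via the convex decomposition \eqref{pot}: convexity of $G(x,\cdot)$ gives $G(x,\lambda\varphi_0)\le\lambda G(x,\varphi_0)+(1-\lambda)G(x,0)$, hence $\int_\Omega F(\lambda\varphi_0)$ remains finite (and continuous in $\lambda$) without any $L^{p'}$ control on $\varphi_0$.
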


\begin{proof}
Observe that the energy identity \eqref{energyCH} entails
$$\mathcal{E}(\varphi(t))+\int_0^t\|\nabla\mu\|^2d\tau=\mathcal{E}(\varphi_0)+\int_0^t(u\varphi,\nabla\mu)d\tau,$$
from which we have
$$\mathcal{E}(\varphi(t))+\frac{1}{2}\int_0^t\|\nabla\mu\|^2d\tau\leq\mathcal{E}(\varphi_0)
+\frac{1}{2}u_{\ast}^2\int_0^t\|\varphi\|^2d\tau,$$
where the energy functional $\mathcal{E}$ is now given by
$$\mathcal{E}(\varphi):=\frac{c_0}{2}\|\varphi\|^2-\frac{1}{2}(\varphi,J\ast\varphi)
+\int_{\Omega}G(x,\varphi(x))dx,$$
and $u_{\ast}:=\|u\|_{L^{\infty}(\Omega)^d}$.
Therefore, the argument used in the previous
section can be adapted to this case. Indeed, in order to prove the compactness of the semiflow $S(t)$
we note that, if $d=3$ and $p^\prime\in(2,6]$, the compact injection \eqref{Aubin-Lions} is valid
and still implies \eqref{Fconv}. Hence, by using the strong convergence $\varphi_j(t)\to\varphi(t)$
in $H$ for a.e $t>0$ we have $\mathcal{E}(\varphi_j(t))\to\mathcal{E}(\varphi(t))$ for a.e. $t>0$.
Setting now
$$
\widetilde{\mathcal{E}}(\varphi(t)):=\mathcal{E}(\varphi(t))-\int_0^t(u\varphi,\nabla\mu)d\tau,
$$
the strong convergence to $\varphi$ in $L^2(H)$ for the sequence $\{\varphi_j\}$ and the weak convergence
to $\mu$ in $L^2(V)$ for the sequence $\{\mu_j\}$ imply that $\widetilde{\mathcal{E}}(\varphi_j(t))\to
\widetilde{\mathcal{E}}(\varphi(t))$ for a.e $t>0$. Since, due to \eqref{energyCH} the function
$\widetilde{\mathcal{E}}(\varphi_j(\cdot))$ is decreasing on $[0,\infty)$ and $\widetilde{\mathcal{E}}(\varphi(\cdot))$ in continuous on $[0,\infty)$, then
$\widetilde{\mathcal{E}}(\varphi_j(t))\to\widetilde{\mathcal{E}}(\varphi(t))$ for {\itshape all}
$t>0$. Hence $\mathcal{E}(\varphi_j(t))\to\mathcal{E}(\varphi(t))$ for all $t>0$
and arguing as in the previous section we get $\varphi_j(t)\to\varphi(t)$ in $\mathcal{Y}_m$ for all $t>0$.
Therefore, the semiflow $S(t)$ is compact. In addition,
the uniqueness of solution trivially implies the Kneser
property (see, e.g., \cite{Ba2,KlVa}) so that the global attractor is also
connected.
\end{proof}

\begin{oss}
{\upshape
The connectedness of the global attractor for the full system remains an open issue.
}
\end{oss}

\section{A dissipative estimate in 3D}\setcounter{equation}{0}
\label{sec5}

In dimension three, of course we are not able to prove an energy identity like
\eqref{idendiffcor}. Actually, one could argue in the spirit of \cite{Ba}, under the
(unproven) assumption that the weak solution $z=[u,\varphi]$ is
strongly continuous from $[0,\infty)$ to $\mathcal{X}_m$. However,
we shall not consider this possibility, but we shall construct
a (generalized) notion of attractor (see next section).
Nevertheless, we are able to prove that a
dissipative estimate like \eqref{dissest} can still be recovered in the
three dimensional case. This the main aim of the present section. We observe that, since
such dissipative estimate relies on the validity of the energy inequality
\eqref{ei} only, then it holds for any weak solution in the sense of Definition \ref{wfdfn2}.

We need the following basic lemma, which is obtained by suitably modifying
\cite[Lemma 7.2]{Ba}.

\begin{lem}
\label{Ballgen}
Let $\theta\in L^1(0,T)$ for every $T>0$ and suppose that
\begin{equation}
\theta(t)+k\int_0^t\theta(\tau)\tau\leq \int_s^t f(\tau)d\tau+\theta(s)+k\int_0^s\theta(\tau)d\tau
\label{intinq}
\end{equation}
holds for a.e. $t$, $s\in(0,\infty)$, with $t\geq s$, where $f\in L^1(0,T)$ for every $T>0$ and
the constant $k\geq 0$ are given. Then we have
\begin{equation}
\theta(t)\leq \theta(s) e^{-k(t-s)}+\int_s^t e^{-k(t-\tau)}f(\tau)d\tau,
\label{thesis1}
\end{equation}
for a.e. $t$, $s\in(0,\infty)$, with $t\geq s$.
Furthermore, suppose $\theta:[0,\infty)\to\mathbb{R}$ is a l.s.c. representative
satisfing \eqref{intinq} for a.e. $t,s\in(0,\infty)$, with $t\geq s$.
Then \eqref{intinq} and \eqref{thesis1} also hold for every $t\geq s$ and for a.e. $s>0$,
and if, in addition, \eqref{intinq} holds for $s=0$,
then we have
\begin{equation}
\theta(t)\leq \theta(0) e^{-kt}+\int_0^t e^{-k(t-\tau)}f(\tau)d\tau,
\label{thesis2}
\end{equation}
for all $t\in[0,\infty)$. In particular, suppose $f(t)=l+g(t)$, where
$l\in\mathbb{R}$ is a given constant and $g\in
L^1_{tb}(0,\infty)$, i.e., $g$ belongs to $L^1_{loc}([0,\infty))$
and is translation bounded, that is,
$$\|g\|_{L^1_{tb}}:=\sup_{t\geq 0}\int_t^{t+1}|g(\tau)|d\tau<\infty.$$
Then we have
\begin{equation}
\theta(t)\leq\theta(0)e^{-kt}+\frac{l}{k}+\frac{\|g\|_{L^1_b}}{1-e^{-k}},
\label{trbd}
\end{equation}
for all $t\in[0,\infty)$.
\label{Ball}
\end{lem}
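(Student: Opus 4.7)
The plan is first to settle \eqref{thesis1} in the base case $s=0$ by turning the hypothesis into a pointwise monotonicity statement and then solving a linear ODE. Set
\[
\Theta(t):=\theta(t)+k\int_0^t\theta(\tau)d\tau-\int_0^t f(\tau)d\tau,\qquad F(t):=\int_0^t f(\tau)d\tau.
\]
Inequality \eqref{intinq} reads $\Theta(t)\leq\Theta(s)$ for a.e. $t\geq s$, so $\Theta$ coincides a.e. with a non-increasing function; modifying $\theta$ on a null set, I may assume $\Theta$ is genuinely non-increasing on $[0,\infty)$ and hence of bounded variation on compact intervals.

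Writing $\alpha(t):=\int_0^t\theta(\tau)d\tau$, the definition of $\Theta$ gives the linear ODE $\alpha'+k\alpha=\Theta+F$ a.e. with $\alpha(0)=0$, whose unique solution is $\alpha(t)=\int_0^t e^{-k(t-\tau)}(\Theta(\tau)+F(\tau))d\tau$. Substituting back into $\theta=\Theta+F-k\alpha$, a classical integration by parts converts the $F$-contributions into $\int_0^t e^{-k(t-\tau)}f(\tau)d\tau$, while a Stieltjes integration by parts applied to $\Theta$ yields
\[
\Theta(t)-k\int_0^t e^{-k(t-\tau)}\Theta(\tau)d\tau=\Theta(0)e^{-kt}+\int_0^t e^{-k(t-\tau)}d\Theta(\tau).
\]
Since $d\Theta$ is a non-positive measure the last integral is $\leq 0$, and using $\Theta(0)=\theta(0)$ I obtain \eqref{thesis1} with $s=0$. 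The general a.e.\ statement follows by rerunning the same argument started at $s$, after rewriting \eqref{intinq} in the equivalent form $\theta(t)+k\int_s^t\theta\leq\theta(s)+\int_s^t f$.

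For the l.s.c.\ representative, I would fix $s$ in the full-measure set where \eqref{intinq} holds, pick a sequence $t_n$ in that same set with $t_n\downarrow t$, and pass to the $\liminf$: the integral terms in \eqref{intinq} are continuous in $t$, and lower semicontinuity gives $\theta(t)\leq\liminf_n\theta(t_n)$. This both extends \eqref{intinq} to every $t\geq s$ and a.e.\ $s$, and, by the same $\liminf$ trick applied to \eqref{thesis1}, delivers \eqref{thesis2} for every $t\geq 0$ whenever \eqref{intinq} also holds at $s=0$.

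Finally, in the translation-bounded case I split $f=l+g$ in \eqref{thesis2}: the constant piece contributes $l(1-e^{-kt})/k\leq l/k$, while for the $g$-piece I partition $(0,t)$ into unit intervals $I_j:=[t-j-1,t-j]\cap[0,t]$, use $e^{-k(t-\tau)}\leq e^{-kj}$ on $I_j$ together with $\int_{I_j}|g|d\tau\leq\|g\|_{L^1_{tb}}$, and sum the geometric series $\sum_{j\geq 0}e^{-kj}=1/(1-e^{-k})$. Together with the $\theta(0)e^{-kt}$ term this yields \eqref{trbd}. I expect the main obstacle to be a clean justification of the Stieltjes integration by parts and of the transition from an a.e.\ non-increasing $\Theta$ to a pointwise representative compatible with the l.s.c.\ version of $\theta$ used in the second part, so that the $\liminf$ bootstrap rests on a genuine pointwise object and not merely on a.e.\ equalities.
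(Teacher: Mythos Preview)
Your derivation of \eqref{thesis1} is correct but follows a genuinely different route from the paper. You work pointwise with a non-increasing representative of $\Theta$, solve the linear ODE for $\alpha=\int_0^\cdot\theta$ explicitly, and extract the result via a Stieltjes integration by parts exploiting $d\Theta\leq 0$. The paper instead stays in the sense of distributions: from $\Theta(t)\leq\Theta(s)$ a.e.\ it deduces $\dot\Theta\leq 0$ in $\mathcal{D}'(0,\infty)$, hence $\dot\theta+k\theta\leq f$ in $\mathcal{D}'$, hence $\omega:=e^{kt}\theta-\int_0^t e^{k\tau}f\,d\tau$ satisfies $\dot\omega\leq 0$ in $\mathcal{D}'$; a mollification argument then recovers the pointwise a.e.\ inequality $\omega(t)\leq\omega(s)$, which is \eqref{thesis1}. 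The distributional route sidesteps BV/Stieltjes technology entirely (so there is no endpoint or jump issue to justify), while your approach yields the estimate through an explicit representation formula. The translation-bounded bound \eqref{trbd} is handled essentially the same way in both.

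One point to sharpen: your passage to $s=0$ for the l.s.c.\ representative is not quite ``the same $\liminf$ trick''. In the $t$-variable, lower semicontinuity gives $\theta(t)\leq\liminf_n\theta(t_n)$, which goes the right way because $\theta(t)$ sits on the left of the inequality. But to send $s\to 0$ in \eqref{thesis1} you need an \emph{upper} bound on $\theta(s)$ in the limit, and l.s.c.\ alone points the wrong way. This is precisely where the additional hypothesis that \eqref{intinq} holds at $s=0$ enters: writing \eqref{intinq} with $s=0$ and $t=s_k$ (for $s_k$ in the good set, $s_k\downarrow 0$) gives $\limsup_k\theta(s_k)\leq\theta(0)$, and combined with l.s.c.\ one obtains $\theta(s_k)\to\theta(0)$; then letting $s_k\to 0$ in \eqref{thesis1} produces \eqref{thesis2}. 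You already flagged the representative-compatibility issue as the main obstacle; the step above is exactly what resolves it, and it is how the paper proceeds.
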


\begin{proof}
Setting
$$\rho(t)=\theta(t)+k\int_0^t\theta(\tau)d\tau-\int_0^tf(\tau)d\tau,$$
from \eqref{intinq} we have
$\rho(t)\leq\rho(s)$ for a.e. $t$, $s\in(0,\infty)$, with $t\geq s$.
We therefore deduce that
\begin{equation}
\dot{\rho}\leq 0\quad\mbox{in }\mathcal{D}'(0,\infty)
\label{dotrho}
\end{equation}
Indeed, take $\varphi\in\mathcal{D}(0,\infty)$, $\varphi\geq 0$. We have
$$0\leq\int_0^{\infty}\frac{\rho(t)-\rho(t+h)}{h}\varphi(t)dt=\int_0^{\infty}
\rho(t)\frac{\varphi(t)-\varphi(t-h)}{h}dt,$$
for all $h>0$. Letting $h\to 0$, from the previous relation
and by means of Lebesgue's theorem we get \eqref{dotrho}. From \eqref{dotrho} we
now get $\dot{\theta}+k\theta\leq f$ in $\mathcal{D}'(0,\infty)$ and hence
$$\frac{d}{dt}\Big(e^{kt}\Big(\theta-\frac{l}{k}\Big)-\int_0^t e^{k\tau}f(\tau)d\tau\Big)\leq 0\quad\mbox{in }\mathcal{D}'(0,\infty).$$
Setting
$$\omega=e^{kt}(\theta-\frac{l}{k})-\int_0^t e^{k\tau}f(\tau)d\tau,$$
we therefore have
\begin{equation}
\dot{\omega}\leq 0\quad\mbox{in }\mathcal{D}'(0,\infty),
\label{dotomega}
\end{equation}
from which we now show that
\begin{equation}
\omega(t)\leq\omega(s),
\label{omegadecr}
\end{equation}
for a.e. $t$, $s\in(0,\infty)$, with $t\geq s$. Indeed, let $\{\chi_{\epsilon}\}_{\epsilon>0}$,
$\chi_{\epsilon}\geq 0$, be a sequence
of mollifiers belonging to $\mathcal{D}(\mathbb{R})$
and consider the convolution $\omega_{\epsilon}=\chi_{\epsilon}\ast\overline{\omega}$,
where $\overline{\omega}$ is the trivial extention of $\omega$ to the whole real line.
Since $\omega_{\epsilon}\in C^{\infty}(\mathbb{R})$, we have, for every $\varphi\in\mathcal{D}(0,\infty)$,
$\varphi\geq 0$
\begin{align}
&\int_0^{\infty}\dot{\omega_{\epsilon}}\varphi=-\int_0^{\infty}\omega_{\epsilon}\dot{\varphi}
=-\int_0^{\infty}\dot{\varphi}(t)dt\int_{\mathbb{R}}\chi_{\epsilon}(t-\tau)\overline{\omega}(\tau)d\tau\nonumber\\
&=-\int_{\mathbb{R}}\overline{\omega}(\tau)d\tau\int_{\mathbb{R}}\chi_{\epsilon}(t-\tau)\dot{\varphi}(t)dt
=-\int_{\mathbb{R}}\overline{\omega}(\tau)(\chi_{\epsilon}\ast\dot{\varphi})(\tau)d\tau\nonumber\\
&
=-\int_0^{\infty}\omega(\tau)\frac{d}{d\tau}(\chi_{\epsilon}\ast\varphi)(\tau)d\tau\leq 0
\end{align}
for $\epsilon>0$ small enough (i.e., such that
$\chi_{\epsilon}\ast\varphi\in\mathcal{D}(0,\infty)$, that occurs
when $\epsilon<\min(\mbox{supp}\varphi)$), due to
\eqref{dotomega}. Hence $\dot{\omega_{\epsilon}}(\tau)\leq 0$
for every $\tau\in(0,\infty)$, from which we deduce
$\omega_{\epsilon}(t)\leq\omega_{\epsilon}(s)$, for every $t$,
$s\in(0,\infty)$, with $t\geq s$. Letting $\epsilon\to 0$ and using
the fact that $\omega_{\epsilon}\to\omega$ a.e. in $(0,\infty)$, we
get \eqref{omegadecr}. Thus, on account of the definition of
$\omega$, from \eqref{omegadecr} we deduce \eqref{thesis1}.

Suppose now that $\theta:[0,\infty)\to\mathbb{R}$ is a l.s.c. representative
and that \eqref{intinq} holds
for a.e. $t,s\in(0,\infty)$ with $t\geq s$.
Let $N_1$ be a null set such that \eqref{intinq} holds for every $t$, $s\in(0,\infty)-N_1$,
with $t\geq s$. Let $t\in[0,\infty)$, $s\in(0,\infty)-N_1$ and take a sequence $t_j\in(0,\infty)-N_1$
such that $t_j\to t$. Write \eqref{intinq} for $s$ and $t_j$.
By virtue of the lower semicontinuity of $\theta$
we see that \eqref{intinq} holds also for all $t\geq s$ and a.e $s\in(0,\infty)$.
The same argument can be applied to \eqref{thesis1}.
Suppose in addition that the l.s.c. representative $\theta$ satisfies \eqref{intinq} also for $s=0$ and for
all $t\in[0,\infty)$.
Take a sequence $t_j\in(0,\infty)-N_1$ such that
$t_j\to 0$ and write \eqref{intinq} for $s=0$ and $t=t_j$.
By virtue of the lower semicontinuity of $\theta$ we get $\theta(t_j)\to\theta(0)$.
Now, let $N_2$ be a null set such that \eqref{thesis1} holds for
for every $s\in(0,\infty)-N_2$ and every $t\geq s$ and take a sequence $s_k\in(0,\infty)-N$, where $N=N_1\cup N_2$, such that
$s_k\to 0$. Write \eqref{thesis1} for $s=s_k$ and for $t\in (0,\infty)$.
Since $\theta(s_k)\to\theta(0)$, by letting $k\to\infty$ in \eqref{thesis1} we get \eqref{thesis2}.

Finally, suppose that $f$ has the form $f(t)=l+g(t)$, with $l\in\mathbb{R}$ a given constant
and $g$ translation bounded in $L^1_{loc}([0,\infty))$. By observing that (see, e.g., \cite[Chap.~XV, Cor.~1.7]{CV})
$$\int_0^t e^{-k(t-\tau)}g(\tau)d\tau\leq\frac{\|g\|_{L^1_{tb}}}{1-e^{-k}},$$
we immediately get \eqref{trbd}.
\end{proof}

Henceforth we shall denote by $u\in C_w([0,\infty);G_{div})$ and $\varphi\in
C_w([0,\infty);H)$ weakly continuous representatives of $u$ and
$\varphi$, where $[u,\varphi]=:z$ is the weak solution corresponding
to $u_0$ and $\varphi_0$ given by Theorem \ref{exist}.

The following lemma, which will be used to prove the dissipative
estimate in 3D, ensures the lower semicontinuity of the energy
$\mathcal{E}(z(\cdot))$ from $[0,\infty)$ to $\mathbb{R}$.
\begin{lem}
Let $z:=[u,\varphi]$ be the weak solution corresponding to $u_0$ and $\varphi_0$ and given
by Theorem \ref{exist}.
Then, the
function $\mathcal{E}(z(\cdot)):[0,\infty)\to\mathbb{R}$ is lower
semicontinuous.
\label{templsc}
\end{lem}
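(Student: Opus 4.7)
The plan is to establish lower semicontinuity by exploiting the weakly continuous representatives of $u$ and $\varphi$ together with the convex splitting of the potential already introduced in the proof of Proposition~\ref{gensem}. Concretely, I would recall from the Remark preceding the statement of Theorem~\ref{exist} that the weak solution $z=[u,\varphi]$ admits representatives $u\in C_w([0,\infty);G_{div})$ and $\varphi\in C_w([0,\infty);H)$; hence, for any $t\in[0,\infty)$ and any sequence $t_n\to t$,
$$u(t_n)\rightharpoonup u(t)\ \text{weakly in}\ G_{div},\qquad \varphi(t_n)\rightharpoonup \varphi(t)\ \text{weakly in}\ H.$$

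Next, I would rewrite the energy using the decomposition \eqref{pot} of $F$, obtaining
$$\mathcal{E}(z(t))=\frac{1}{2}\|u(t)\|^2+\frac{c_0}{4}\|\varphi(t)\|^2-\frac{1}{2}(\varphi(t),J\ast\varphi(t))+\mathcal{L}(\varphi(t)),$$
with $\mathcal{L}(\varphi)=\int_\Omega G(x,\varphi(x))\,dx$ and $G(x,\cdot)$ strictly convex. Then I would check lower semicontinuity term by term along $t_n\to t$. The two quadratic terms $\tfrac12\|u\|^2$ and $\tfrac{c_0}{4}\|\varphi\|^2$ are weakly lower semicontinuous on $G_{div}$ and $H$, respectively. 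The convex integral $\mathcal{L}$ is weakly lower semicontinuous on $H$, thanks to the quadratic lower bound on $G$ and convexity (precisely the argument recalled in the proof of Proposition~\ref{gensem}). For the nonlocal coupling term, I would use that, by (A1), the operator $J\ast\cdot:H\to H$ is compact, so $J\ast\varphi(t_n)\to J\ast\varphi(t)$ strongly in $H$; combined with the weak convergence $\varphi(t_n)\rightharpoonup\varphi(t)$, this yields $(\varphi(t_n),J\ast\varphi(t_n))\to(\varphi(t),J\ast\varphi(t))$, so this contribution is actually continuous (not merely l.s.c.).

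Adding the four estimates gives
$$\liminf_{n\to\infty}\mathcal{E}(z(t_n))\geq\mathcal{E}(z(t)),$$
which is exactly lower semicontinuity of $\mathcal{E}(z(\cdot))$ at the arbitrary point $t\in[0,\infty)$. There is no genuine obstacle: the proof is essentially a repetition of the weak lower semicontinuity step \eqref{liminfE} that already appears in the proof of Proposition~\ref{gensem}, with the only change being the use of temporal weak continuity of the representatives instead of weak convergence of a sequence of solutions. The mildest subtlety is ensuring that $\mathcal{L}(\varphi(t))$ is finite along the trajectory and that the quadratic lower bound $G(x,s)\geq -\alpha s^2-\beta$ gives equi-integrability of the negative part so that $\mathcal{L}$ is well defined and weakly l.s.c., but these facts are encoded in assumption~(A3) and have already been used above.
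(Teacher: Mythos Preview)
Your proof is correct and follows essentially the same approach as the paper's: both decompose $F$ into a convex part plus a quadratic correction, rewrite $\mathcal{E}$ accordingly, and then combine weak lower semicontinuity of the $L^2$-norm and of the convex integral functional with the compactness of $J\ast\cdot:H\to H$ along the weakly continuous trajectory. The only cosmetic difference is that the paper uses the splitting $F(s)=\widetilde{G}(x,s)-a(x)s^2/2$ (so that the quadratic $\|\varphi\|^2$ term is absorbed into $\widetilde{G}$), whereas you use the splitting \eqref{pot} from Proposition~\ref{gensem}, which leaves an explicit $\tfrac{c_0}{4}\|\varphi\|^2$ term; this is immaterial since that term is itself weakly l.s.c.
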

\begin{proof}
Let us represent the potential $F$ as
\begin{equation}
F(s)=\widetilde{G}(x,s)-a(x)\frac{s^2}{2},\quad\forall s\in\mathbb{R},\quad\mbox{a.e. }x\in\Omega,
\label{potrepr}
\end{equation}
where $\widetilde{G}(x,\cdot)$ is strictly convex for a.e.
$x\in\Omega$, owing to (A3). Then, the energy
$\mathcal{E}(z(\cdot))$ takes the form
$$\mathcal{E}(z(t))=\frac{1}{2}\|u(t)\|^2-\frac{1}{2}(\varphi(t),J\ast\varphi(t))
+\int_{\Omega}\widetilde{G}(x,\varphi(x,t))dx.$$
Therefore, the lower semicontinuity of $\mathcal{E}(z(\cdot)):[0,\infty)\to\mathbb{R}$
is a consequence of the weak lower semicontinuity in $G_{div}$ of the $L^2-$norm,
of the compactness of the convolution operator $J\ast\cdot:H\to H$ and
of the convexity of the integral funcional in $\widetilde{G}$ given by the last term
in the relation above.
\end{proof}

In dimension three, we can prove that the same global weak solution
constructed in Theorem \ref{exist} also satisfies energy inequality \eqref{ei} between two arbitrary times $s$ an $t$ (i.e., for a.e. $s\geq 0$, including $s=0$ and for all $t\geq s$), provided that a further growth assumption on $F$ is fulfilled (not needed in dimension two). This is stated in the following

\begin{lem}
Assume (A1)-(A4) hold. In addition, suppose that (A5) holds with $p\in(6/5,2]$ when $d=3$.
Let $z:=[u,\varphi]$ be the weak solution (in the sense of Definition \ref{wfdfn})
corresponding to $u_0$ and $\varphi_0$ and given
by Theorem \ref{exist}. Then, the following
energy inequality is satisfied
\begin{align}
& \mathcal{E}(z(t))+\int_s^t(2\|\sqrt{\nu(\varphi)}Du\|^2+\|\nabla\mu\|^2)d\tau\leq\mathcal{E}(z(s))+\int_s^t\langle h(\tau),
u(\tau)\rangle d\tau,
\label{eist}
\end{align}
for a.e. $s\geq 0$, including $s=0$, and for every $t\geq s$,
where $\mu=a\varphi-J\ast\varphi+F'(\varphi)$.
\end{lem}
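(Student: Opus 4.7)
The plan is to extend the energy inequality \eqref{ei} to start from an arbitrary initial time $s$ (instead of $s=0$) by returning to the approximation scheme used to construct the weak solution in \cite{CFG}. The Galerkin-type approximate solutions $z_n=[u_n,\varphi_n]$ (with chemical potential $\mu_n$) underlying the proof of Theorem \ref{exist} satisfy the exact energy identity
$$
\mathcal{E}(z_n(t))+\int_s^t\bigl(2\|\sqrt{\nu(\varphi_n)}Du_n\|^2+\|\nabla\mu_n\|^2\bigr)d\tau=\mathcal{E}(z_n(s))+\int_s^t\langle h(\tau),u_n(\tau)\rangle d\tau,
$$
for every $0\leq s\leq t$, and the initial data can be arranged so that $\mathcal{E}(z_n(0))\to\mathcal{E}(z(0))$. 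The goal is to pass to the limit in this identity, keeping the equality on the RHS but only lower semicontinuity on the LHS, so as to obtain \eqref{eist}.

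To do this, I first establish pointwise a.e. convergence of the energy along the approximate solutions. The reinforced growth restriction $p\in(6/5,2]$ in dimension three translates to $p'\in[2,6)$ in \eqref{growth}. Since $V\hookrightarrow\hookrightarrow L^{p'}(\Omega)$ when $p'<6$, the Aubin--Lions lemma, applied to the uniform bounds on $\varphi_n$ in $L^2(0,T;V)$ and on $\varphi_n'$ in $L^{4/3}(0,T;V')$, yields $\varphi_n\to\varphi$ strongly in $L^2(0,T;L^{p'}(\Omega))$. Extracting a subsequence, $\varphi_n(\tau)\to\varphi(\tau)$ in $L^{p'}(\Omega)$ for a.e.\ $\tau\geq 0$. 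Continuity of the Nemytskii operator $\varphi\mapsto F(\varphi)$ from $L^{p'}(\Omega)$ to $L^1(\Omega)$ (ensured by the polynomial bound $|F(s)|\leq c_5|s|^{p'}+c_6$) then gives
$$
\int_\Omega F(\varphi_n(\tau))\,dx\longrightarrow\int_\Omega F(\varphi(\tau))\,dx\quad\text{for a.e. }\tau\geq 0.
$$
Combined with Aubin--Lions applied to $u_n$ (giving $u_n(\tau)\to u(\tau)$ in $G_{div}$ a.e.) and $\varphi_n(\tau)\to\varphi(\tau)$ in $H$ a.e., this yields $\mathcal{E}(z_n(\tau))\to\mathcal{E}(z(\tau))$ on a full-measure set $\Sigma\subset(0,\infty)$.

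Now fix either $s\in\Sigma$ or $s=0$ and any $t\geq s$, and pass to the limit in the approximate identity. On the right-hand side, $\mathcal{E}(z_n(s))\to\mathcal{E}(z(s))$ by the choice of $s$, while $\int_s^t\langle h(\tau),u_n(\tau)\rangle d\tau\to\int_s^t\langle h(\tau),u(\tau)\rangle d\tau$ from the weak convergence $u_n\rightharpoonup u$ in $L^2(s,t;V_{div})$. On the left-hand side, the dissipation integrals are handled by weak lower semicontinuity of $L^2$-norms together with the weak convergence $\nabla\mu_n\rightharpoonup\nabla\mu$ in $L^2(s,t;H)$ and $\sqrt{\nu(\varphi_n)}Du_n\rightharpoonup\sqrt{\nu(\varphi)}Du$ in $L^2$ (as in \eqref{weakvisc}). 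Finally, using $u_n(t)\rightharpoonup u(t)$ in $G_{div}$ and $\varphi_n(t)\rightharpoonup\varphi(t)$ in $H$ for \emph{every} $t\geq 0$ (derived as in the proof of Proposition \ref{gensem} from the uniform bounds on $u_n'$ and $\varphi_n'$), together with the convex decomposition \eqref{potrepr} exploited in Lemma \ref{templsc}, we obtain $\mathcal{E}(z(t))\leq\liminf_n\mathcal{E}(z_n(t))$. Collecting these estimates yields \eqref{eist} for every $s\in\Sigma\cup\{0\}$ and every $t\geq s$.

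The main obstacle is the pointwise a.e.\ convergence $\int_\Omega F(\varphi_n(\tau))\to\int_\Omega F(\varphi(\tau))$; this is where the additional restriction $p>6/5$ in 3D is essential, as it is precisely what enables the compact Aubin--Lions embedding into $L^2(0,T;L^{p'}(\Omega))$ and hence makes the Nemytskii operator convergent. The rest of the argument is a by-now standard lower-semicontinuity passage to the limit, combined with the lsc property of $\mathcal{E}(z(\cdot))$ supplied by Lemma \ref{templsc}, which also absorbs the case $s=0$ and matches \eqref{eist} with the already-known \eqref{ei}.
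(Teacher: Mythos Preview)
Your proposal is correct and follows essentially the same route as the paper's proof: both return to the Faedo--Galerkin approximants, use the Aubin--Lions compactness $L^2(0,T;V)\cap W^{1,4/3}(0,T;V')\hookrightarrow\hookrightarrow L^2(0,T;L^{p'}(\Omega))$ (valid precisely because $p'<6$, i.e.\ $p>6/5$) to obtain $\int_\Omega F(\varphi_n(s))\to\int_\Omega F(\varphi(s))$ for a.e.\ $s$, and combine this with weak lower semicontinuity on the left-hand side. The only cosmetic difference is that the paper first obtains \eqref{eist} for a.e.\ $t$ and then upgrades to every $t\geq s$ via Lemma~\ref{templsc} applied to the limit $z$, whereas you pass to the limit directly for every $t$ using the pointwise weak convergences $u_n(t)\rightharpoonup u(t)$, $\varphi_n(t)\rightharpoonup\varphi(t)$ and the convex decomposition \eqref{potrepr}; both variants are valid.
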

\begin{proof}
We can argue as in the proof of \eqref{ei} (see \cite[Theorem
1]{CFG}) and integrate the energy identity satisfied by the
approximate solutions $z_n:=[u_n,\varphi_n]$ of the Faedo-Galerkin
scheme between $s$ and $t$, with $0\leq s\leq t$. When we pass to
the limit as $n\to\infty$ in the integrated identity we have to consider
the functional integral term $\int_{\Omega}F(\varphi_n(s))$ on the
right hand side. Recalling now the bounds for the sequences
$\{u_n\}$, $\{\varphi_n\}$ and $\{\varphi_n'\}$, in particular
(see \cite{CFG})
$$\|\varphi_n\|_{L^2(0,T;V)}\leq c,\qquad\|\varphi_n'\|_{L^{4/3}(0,T;V')}\leq c,\qquad\forall T>0,$$
and using the Aubin-Lions lemma which ensures the compact
embedding
$$L^2(0,T;V)\cap W^{1,4/3}(0,T;V')\hookrightarrow\hookrightarrow L^2(0,T;L^{p'}(\Omega)),$$
with $p'\in [2,6)$ (since $p\in (6/5,2]$), at least for a subsequence
we have
$$\varphi_n(s)\to\varphi(s),\qquad\mbox{strongly in }L^{p'}(\Omega),$$
for a.e. $s>0$.
Since $F$ has a polynomial growth of order $p'$ (cf. Remark 1), then by Lebesgue's
theorem we have
$$\int_{\Omega}F(\varphi_n(s))\to\int_{\Omega}F(\varphi(s)),$$
for a.e. $s>0$. Using now the lower semicontinuity of the norm we therefore
get \eqref{eist} for a.e $s$ and a.e. $t$, with $0\leq s\leq t$.
By means of a suitable approximation of the initial datum $\varphi_0$ and of the
fact that $F$ is a quadratic perturbation of a convex function we deduce,
as in the proof of \cite[Theorem 1]{CFG}, that \eqref{eist} holds also for $s=0$
and for a.e $t>0$. Finally, due to the lower semicontinuity of $\mathcal{E}(z(\cdot)):[0,\infty)\to\mathbb{R}$
(see Lemma \ref{templsc}), we deduce that \eqref{eist} holds also for every $t\geq s$.
\end{proof}

\begin{oss}
{\upshape
If the growth restriction on $F$ does not hold, then
we can only say that for every $s\geq 0$ there exists a global weak solution (with initial data given at time $s$
by the solution constructed in Theorem \ref{exist} with initial data given at time $s=0$ and considered at time $s$)
satisfying \eqref{eist} for all $t\geq s$ (such global weak solution not necessarily coincides, between $s$ and $t$, with
the global weak solution constructed in Theorem \ref{exist} with initial data given at time $s=0$ and generally depends on $s$).
}
\end{oss}

We can now prove the following

\begin{thm}
\label{energyineq}
Suppose (A1)-(A3) and (A5)-(A6) hold. Also, let $h\in
L^2_{tb}(0,\infty,V_{div}')$ be given. Then every weak solution
$z=[u,\varphi]$ (in the sense of Definition \ref{wfdfn2}) fulfilling the
energy inequality \eqref{eist} for a.e. $s\geq 0$, including $s=0$, and every $t\geq s$,
satisfies the dissipative inequality
\begin{equation}
\mathcal{E}(z(t))\leq \mathcal{E}(z_0)e^{-kt}+F(m_0)|\Omega|+K,
\label{dissest3D}
\end{equation}
for all $t\geq 0$, where $m_0=(\varphi_0,1)$, and $k$, $K$ are two
positive constants that are independent of the initial data with $K$
depending on $\Omega$, $\nu_1$, $J$, $F$ and on
$\|h\|_{L^2_{tb}(0,\infty;V_{div}')}$.
\end{thm}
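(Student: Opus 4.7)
The plan is to convert the integral energy inequality \eqref{eist} into the integral form required by Lemma \ref{Ball}, and then to invoke that lemma. Since the dissipative estimate \eqref{dissest} was already established in dimension two by a purely energetic argument (cf.\ \cite{CFG}), the key novelty here is that we only have \eqref{eist} for a.e.\ $s\geq 0$ (including $s=0$) and all $t\geq s$, rather than the differential energy identity \eqref{idendiffcor}: it is precisely the integral-form generalized Gronwall inequality of Lemma \ref{Ball} that bridges this gap.

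First I would bound the forcing term in \eqref{eist} by Young's inequality,
$$\langle h(\tau),u(\tau)\rangle\leq\frac{\nu_1}{2}\|\nabla u(\tau)\|^2+C\|h(\tau)\|_{V_{div}'}^2,$$
and absorb the first summand into $2\|\sqrt{\nu(\varphi)}Du\|^2\geq \nu_1\|\nabla u\|^2$ on the left-hand side. I would then show that the remaining dissipation,
$$D(\tau):=\frac{\nu_1}{2}\|\nabla u(\tau)\|^2+\|\nabla\mu(\tau)\|^2,$$
controls $\mathcal{E}(z(\tau))$ from below up to a constant proportional to $F(m_0)|\Omega|$. For the velocity part this follows from Poincaré's inequality, $\|\nabla u\|^2\geq\lambda_1\|u\|^2$. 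For the order parameter, Poincaré--Wirtinger gives $\|\nabla\mu\|^2\geq C\|\mu-\overline{\mu}\|^2$, where $\overline{\mu}=|\Omega|^{-1}(F'(\varphi),1)$ since mass conservation (Remark \ref{mass}) and the identity $(a\varphi,1)=(J\ast\varphi,1)$ (consequence of $J$ even) ensure that the nonlocal contribution to $\overline{\mu}$ vanishes. Testing $\mu-\overline{\mu}=a\varphi-J\ast\varphi+F'(\varphi)-\overline{\mu}$ against $\varphi-m_0$ and using the coercivity (A6) on $F''+a$, together with (A5), yields a lower bound controlling $\|\varphi\|^2$ and $\int_\Omega F(\varphi)$ in terms of $\|\nabla\mu\|^2$, plus a constant depending on $F(m_0)|\Omega|$. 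Combining, I obtain $D(\tau)\geq k\mathcal{E}(z(\tau))-k\,F(m_0)|\Omega|-K_1$ for a positive $k$ and a constant $K_1$ depending only on $\Omega$, $\nu_1$, $J$ and $F$; this is precisely the dissipation estimate already carried out in \cite{CFG} for the two dimensional case.

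Plugging this lower bound back into \eqref{eist} and incorporating the bound on $\langle h,u\rangle$ produces
\begin{equation*}
\mathcal{E}(z(t))+k\int_s^t\mathcal{E}(z(\tau))\,d\tau\leq\mathcal{E}(z(s))+\int_s^t\left(l+g(\tau)\right)d\tau,
\end{equation*}
for a.e.\ $s\geq 0$ (including $s=0$) and every $t\geq s$, where $l:=k\,F(m_0)|\Omega|+K_1$ and $g(\tau):=C\|h(\tau)\|_{V_{div}'}^2$ is translation bounded by hypothesis. Setting $\theta(t):=\mathcal{E}(z(t))+\gamma$ with $\gamma$ as in the proof of Proposition \ref{pointdiss} so that $\theta\geq 0$, this reads as \eqref{intinq} with $f=l+g+k\gamma$. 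By Lemma \ref{templsc} the representative $\theta$ is lower semicontinuous on $[0,\infty)$, and the inequality holds for $s=0$ and every $t\geq 0$. Applying the last assertion of Lemma \ref{Ball}, one obtains
$$\theta(t)\leq\theta(0)e^{-kt}+\frac{l+k\gamma}{k}+\frac{C\|h\|_{L^2_{tb}(0,\infty;V_{div}')}^2}{1-e^{-k}},$$
which rearranges into the claimed estimate \eqref{dissest3D} upon choosing $K$ equal to the sum of the last two constants minus $\gamma$.

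The main obstacle is the coercivity step: showing $D(\tau)\geq k\mathcal{E}(z(\tau))-k\,F(m_0)|\Omega|-K_1$. This requires squeezing from $\|\nabla\mu\|^2$ enough control on both $\|\varphi\|^2$ and $\int_\Omega F(\varphi)$, which is possible only because (A6) provides the strict convexity of $F+\tfrac{a}{2}s^2$ together with the growth $F''+a\gtrsim |s|^{2q}$, and because the assumption on $m_0=(\varphi_0,1)$ pins down $\overline{\mu}$ via conserved mass. Once this pointwise-in-time bound is secured, the rest is a direct application of Lemma \ref{Ball}, which is the technical device that replaces, in dimension three, the unavailable energy identity.
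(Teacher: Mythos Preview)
Your proposal is correct and follows essentially the same route as the paper: establish the coercivity bound $\frac{\nu_1}{2}\|\nabla u\|^2+\|\nabla\mu\|^2\geq k\mathcal{E}(z)-\text{const}$, insert it into \eqref{eist} to obtain the integral inequality \eqref{intinq}, invoke the lower semicontinuity of $\mathcal{E}(z(\cdot))$ from Lemma \ref{templsc}, and apply Lemma \ref{Ball}. The only cosmetic difference is the treatment of nonzero mean: the paper first reduces to $(\varphi_0,1)=0$ via the shift $\widetilde{\varphi}=\varphi-m_0$, $\widetilde{F}(s)=F(s+m_0)-F(m_0)$, proves \eqref{dissest-0}, and recovers \eqref{dissest3D} from the identity $\widetilde{\mathcal{E}}(\widetilde{z})=\mathcal{E}(z)-F(m_0)|\Omega|$, whereas you test $\mu-\overline{\mu}$ directly against $\varphi-m_0$ and carry the $F(m_0)|\Omega|$ term through as part of the constant $l$; these are equivalent computations, and your additive shift $\theta=\mathcal{E}+\gamma$ is harmless but unnecessary since Lemma \ref{Ball} does not require $\theta\geq 0$.
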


\begin{oss}
{\upshape Since, under the growth restriction $p\in(6/5,2]$ the weak solution of Theorem \ref{exist}, which is
constructed via a Faedo-Galerkin method, satisfies the energy
inequality \eqref{eist}, then for such weak solution the dissipative
estimate \eqref{dissest3D} holds. Nevertheless, the validity of
\eqref{dissest3D} does not depend neither on the fact that the weak solution
is constructed as in Theorem \ref{exist} nor on the growth restriction, but it
relies on the validity of the energy inequality \eqref{eist} only.}
\end{oss}

\begin{proof}
Let us first suppose that $(\varphi_0,1)=0$ and
multiply equation $\mu=a\varphi-J\ast\varphi+F'(\varphi)$ by $\varphi$ in $L^2(\Omega)$.
We obtain
\begin{equation}
(\mu,\varphi)=\frac{1}{2}
\int_{\Omega}\int_{\Omega}J(x-y)(\varphi(x)-\varphi(y))^2 dxdy+(F'(\varphi),\varphi).\label{muphi}
\end{equation}
Observe now that, by writing the potential $F$ as in \eqref{potrepr} and using the convexity
of $\widetilde{G}(x,\cdot)$,
then, for every $s\in\mathbb{R}$ and a.e. $x\in\Omega$ we have
$$(F'(s)+a(x)s)s\geq F(s)+\frac{a(x)}{2}s^2-F(0),$$
and hence
$$F'(s)s\geq F(s)-\frac{a(x)}{2}s^2-F(0).$$
Thus, from \eqref{muphi} we get
\begin{align}
(\mu,\varphi)\geq\frac{1}{2}
\int_{\Omega}\int_{\Omega}J(x-y)(\varphi(x)-\varphi(y))^2 dxdy
+\int_{\Omega}F(\varphi(t))-\frac{1}{2}\|\sqrt{a}\varphi\|^2-F(0)|\Omega|.
\label{muphi2}
\end{align}
On the other hand, we have
$$(\mu,\varphi)=(\mu-\overline{\mu},\varphi)\leq C_P\|\nabla\mu\|\|\varphi\|,$$
where $C_P$ is the Poincar\'{e}-Wirtinger constant and
$\overline{\mu}:=\frac{1}{|\Omega|}(\mu,1)$.
Furthermore, (A6) implies that there exist $C_9>0$ and $C_{10}>0$ such
that $F(s)\geq C_9|s|^{2+2q}-C_{10}$ for all $s\in\mathbb{R}$, and therefore from \eqref{muphi2} we get
\begin{align}
&\frac{1}{8}\int_{\Omega}\int_{\Omega}J(x-y)(\varphi(x)-\varphi(y))^2 dxdy+\frac{1}{2}\int_{\Omega}F(\varphi)
+\frac{C_9}{2}\int_{\Omega}|\varphi|^{2+2q}-\frac{C_{10}}{2}|\Omega|\nonumber\\
&\leq C_{11}\|\varphi\|^2+\|\nabla\mu\|^2+F(0)|\Omega|,\nonumber
\end{align}
with $C_{11}=\frac{1}{4}(3\|J\|_{L^1}+C_{P}^2)$.
We deduce
\begin{equation}
\frac{1}{8}\int_{\Omega}\int_{\Omega}J(x-y)(\varphi(x)-\varphi(y))^2 dxdy+\frac{1}{2}\int_{\Omega}F(\varphi)
\leq\|\nabla\mu\|^2+C_{12},
\label{jr}
\end{equation}
and therefore
\begin{equation}
\frac{1}{2}\mathcal{E}(z(t))\leq C_{13}\Big(\frac{\nu_1}{2}\|\nabla u(t)\|^2+\|\nabla\mu(t)\|^2\Big)+C_{12},
\label{hr}
\end{equation}
where $C_{13}=\max(1,1/2\lambda_1\nu_1)$, $\lambda_1$ being the first eigenvalue of the Stokes operator $A$.
We point out that all constants only depend on the parameters
of the problem and are independent of the initial data.

Observe that the energy inequality \eqref{eist} yields
\begin{equation*}
\mathcal{E}(z(t))+\int_s^t\Big(\frac{\nu_1}{2}\|\nabla u\|^2
+\|\nabla\mu\|^2\Big)d\tau
\leq\mathcal{E}(z(s))+\frac{1}{2\nu_1}\int_s^t\|h(\tau)\|_{V_{div}'}^2
d\tau,
\end{equation*}
for a.e $s\geq 0$, including $s=0$, and every $t\geq s$.
Therefore, on account of \eqref{hr}, we obtain the integral
inequality
\begin{equation}
 \mathcal{E}(z(t))+k\int_0^t\mathcal{E}(z(\tau))
 d\tau\leq l(t-s)+\frac{1}{2\nu_1}\int_s^t\|h(\tau)\|_{V_{div}'}^2d\tau
 +\mathcal{E}(z(s))
 +k\int_0^s\mathcal{E}(z(\tau))d\tau,
\label{eiconseq}
 \end{equation}
for a.e. $s\geq 0$, including $s=0$, and every $t\geq s$,
where $k=1/2C_{13}$ and $l=C_{12}/C_{13}$.
Since, by Lemma \ref{templsc}, $\mathcal{E}(z(\cdot)):[0,\infty)\to\mathbb{R}$ is
lower semicontinuous, then by applying Lemma \ref{Ball} we deduce that
\begin{equation}
\mathcal{E}(z(t))\leq\mathcal{E}(z_0)e^{-kt}+K,
\label{dissest-0}
\end{equation}
for all $t\geq 0$, where the constant $K>0$ is given by
\begin{equation}
K=\frac{1}{2\nu_1}
\frac{1}{1-e^{-k}}\|h\|_{L^2_{tb}(0,\infty;V_{div}')}^2+\frac{l}{k}.
\label{constK}
\end{equation}

Now, let $z:=[u,\varphi]$ be a weak solution corresponding to data $z_0:=[u_0,\varphi_0]$
with $m_0:=(\varphi_0,1)\not=0$ for the problem with potential $F$
and fulfilling the energy inequality \eqref{eist} for a.e. $s\geq 0$, including $s=0$, and for every $t\geq s$.
Then
$\widetilde{z}:=[u,\widetilde{\varphi}]$, where $\widetilde{\varphi}=\varphi-m_0$,
is a weak solution with data $\widetilde{z}_0:=[u_0,\varphi_0-m_0]$ for the same problem with
potential $\widetilde{F}$ and viscosity $\widetilde{\nu}$ given by
$$\widetilde{F}(s):=F(s+m_0)-F(m_0),\qquad\widetilde{\nu}(s):=\nu(s+m_0).$$
The weak solution $\widetilde{z}$ fulfills $(\widetilde{\varphi},1)=0$ and it can be easily checked that \eqref{eist} holds for $\widetilde{z}$, namely that we have
\begin{align}
& &\widetilde{\mathcal{E}}(\widetilde{z}(t))+\int_s^t(2\|\sqrt{\widetilde{\nu}(\widetilde{\varphi})}Du\|^2
+\|\nabla\widetilde{\mu}\|^2)d\tau\leq\widetilde{\mathcal{E}}(\widetilde{z}(s))+\int_s^t\langle h(\tau),
u\rangle d\tau,
\label{eisttilde}
\end{align}
for a.e. $s\geq 0$, including $s=0$, and for every $t\geq s$,
where
$$\widetilde{\mathcal{E}}(\widetilde{z}(t)):
=\frac{1}{2}\|u(t)\|^2+\frac{1}{4}
\int_{\Omega}\int_{\Omega}J(x-y)(\widetilde{\varphi}(x,t)-\widetilde{\varphi}(y,t))^2
dxdy+\int_{\Omega}\widetilde{F}(\widetilde{\varphi}(t))$$
and $\widetilde{\mu}:=a\widetilde{\varphi}-J\ast\widetilde{\varphi}+\widetilde{F}'(\widetilde{\varphi})
=a\varphi-J\ast\varphi+F'(\varphi)=\mu$.
Indeed \eqref{eisttilde} is an immediate consequence of the following identity
\begin{equation}
\widetilde{\mathcal{E}}(\widetilde{z}(t))=\mathcal{E}(z(t))-F(m_0)|\Omega|,
\label{mathcalE}
\end{equation}
and of the fact that $z$ satisfies \eqref{eist}.
By applying the argument above we therefore deduce that the weak solution $\widetilde{z}$
satisfies \eqref{dissest-0} and by combining this inequality with \eqref{mathcalE} we get \eqref{dissest3D}.
\end{proof}

\begin{oss}
{\upshape Assumption (A6) in Theorem \ref{energyineq} can be
replaced by (A4) provided that either (i)
$c_1>\frac{3}{2}\|J\|_{L^1}$ or (ii) $C_P<\frac{c_0}{2\|\nabla
J\|_{L^1}}$ holds. Indeed, using (A4) from \eqref{muphi2} we have
\begin{align}
&\frac{1}{8}\int_{\Omega}\int_{\Omega}J(x-y)(\varphi(x)-\varphi(y))^2 dxdy+\frac{1}{2}\int_{\Omega}F(\varphi)
+\frac{c_1}{2}\|\varphi\|^2-\frac{c_2}{2}|\Omega|\nonumber\\
&\leq\frac{3}{4}(\varphi,J\ast\varphi)+C_P\|\nabla\mu\|\|\varphi\|\leq\frac{3}{4}\|J\|_{L^1}\|\varphi\|^2
+C_P\|\nabla\mu\|\|\varphi\|.
\label{A4forA6}
\end{align}
From \eqref{A4forA6} it clear that, if (i) holds an inequality like
\eqref{jr} can be obtained again. On the other hand, if (ii) holds then
it can be proved that (see \cite[Remark 9]{CFG})
$\|\nabla\mu\|^2\geq\beta\|\nabla\varphi\|^2$ for
$\overline{\varphi}=0$, where $\beta=(c_0-2C_P\|\nabla
J\|_{L^1})^2$. Then an inequality like \eqref{jr} can still be
recovered  from \eqref{A4forA6}. Therefore, inequality \eqref{hr}
(with different values of $C_{12},C_{13}$) holds in both cases and
the same argument used in the proof can be used to show that every
weak solution $z=[u,\varphi]$ (in the sense of Definition
\ref{wfdfn}) which fulfills the energy inequality \eqref{eist} also
satisfies \eqref{dissest3D}.}
\end{oss}


\section{Existence of a trajectory attractor}\setcounter{equation}{0}
\label{sec6}
There exist various methods to define a generalized notion of global attractor for the Navier-Stokes equations
in dimension three. Here we will follow the so-called trajectory approach presented in \cite{CV} (see also \cite{CV2,FS,Se}). For alternative approaches, the reader is referred to, e.g., \cite{CF,Cu,KaVa,Ro} and references therein. In this section our assumption (A6) will be slightly strengthened.
We shall deal mainly with the case $d=3$, though the case $d=2$ will also be considered.

We begin to define, for any given $m\geq 0$ and $M>0$, the functional space
\begin{align*}
\mathcal{F}_M =\Big\{&[v,\psi] \in L^\infty(0,M;G_{div}\times L^{p^\prime}(\Omega))\cap L^2(0,M;V_{div}\times V)\,:\\
&v_t \in L^{4/3}(0,M;V^\prime_{div}),\; \psi_t \in L^2(0,M;V^\prime), \; |(\psi(t),1)|\leq m, \; t\in [0,M]\Big\}.
\end{align*}
which is a complete metric space with respect to the metric induced by the norm
\begin{align*}
\Vert[v,\psi]\Vert_{\mathcal{F}_M} &= \Vert [v, \psi]\Vert_{L^\infty(0,M;G_{div}\times L^{p^\prime}(\Omega))}
+ \Vert [\nabla v,\nabla \psi]\Vert_{L^2(0,M; H \times H)}\\
&+ \Vert v_t \Vert_{L^{4/3}(0,M;V^\prime_{div})} + \Vert\psi_t \Vert_{L^2(0,M;V^\prime)}.
\end{align*}
Then we introduce the spaces
\begin{align*}
\mathcal{F}^+_{loc} =\Big\{&[v,\psi] \in L^\infty_{loc}([0,\infty);G_{div}\times L^{p^\prime}(\Omega))\cap L^2_{loc}([0,\infty); V_{div}\times V)\,:\\
&v_t \in L^{4/3}_{loc}([0,\infty);V^\prime_{div}),\; \psi_t \in L^2_{loc}([0,\infty);V^\prime),
\; |(\psi(t),1)|\leq m, \; t\geq 0\Big\},\\
 \mathcal{F}^+_{b} =\Big\{&[v,\psi] \in L^\infty(0,\infty;G_{div}\times L^{p^\prime}(\Omega))\cap L^2_{tb}(0,\infty; V_{div}\times V)\,:\\
&v_t \in L^{4/3}_{tb}(0,\infty;V^\prime_{div}),\; \psi_t \in L^2_{tb}(0,\infty;V^\prime),
\; |(\psi(t),1)|\leq m, \; t\geq 0\Big\}.
\end{align*}
We recall that $\mathcal{F}^+_{b}$ can be viewed as a complete metric space as $\mathcal{F}_M$ by endowing it with the metric induced by the norm
\begin{align*}
\Vert[v,\psi]\Vert_{\mathcal{F}^+_{b}} &= \Vert [v, \psi]\Vert_{L^\infty(0,\infty;G_{div}\times L^{p^\prime}(\Omega))}
+ \Vert [\nabla v,\nabla \psi]\Vert_{L^2_{tb}(0,\infty; H \times H)}\\
&+ \Vert v_t \Vert_{L^{4/3}_{tb}(0,\infty;V^\prime_{div})} + \Vert\psi_t \Vert_{L^2_{tb}(0,\infty;V^\prime)}.
\end{align*}

We will indicate by $\Theta_M$ the space $\mathcal{F}_M$ endowed with the following sequential topology

\begin{defn}
\label{topconv}
$\{[v_n,\psi_n]\}\subset \mathcal{F}_M$ converges to $[v,\psi]\in\mathcal{F}_M$
as $n \to \infty$ in $\Theta_M$ if
\begin{align*}
&v_n\rightharpoonup v\quad
\mbox{weakly}^{\ast}\mbox{ in }L^{\infty}(0,M;G_{div})
\;\mbox{and weakly in }L^2(0,M;V_{div}),\\
&(v_n)_t\rightharpoonup v_t\quad\mbox{weakly in }L^{4/3}(0,M;V_{div}'),\\
&\psi_n\rightharpoonup\psi\quad
\mbox{weakly}^{\ast}\mbox{ in }L^{\infty}(0,M;L^{p^\prime}(\Omega))
\;\mbox{and weakly in }L^2(0,M;V),\\
&(\psi_n)_t\rightharpoonup\psi_t\quad\mbox{weakly in }L^2(0,M;V').
\end{align*}
\end{defn}
Then the inductive limit of $\{\Theta_M\}_{M>0}$ will be denoted by $\Theta^+_{loc}$ (see \cite[Chap.~XII, Def.~1.3]{CV}).
We recall that $\Theta_M$ and $\Theta^+_{loc}$ have countable topological bases.

\begin{oss}
{\upshape
We have given all the definitions above with reference to the case $d=3$. If $d=2$,
in the definitions of functional spaces $\mathcal{F}_M,\mathcal{F}^+_{loc},\mathcal{F}^+_b$
(and in the corresponding norms)
we replace the regularity assumptions $L^{4/3},L^{4/3}_{loc},L^{4/3}_{tb}$
for $v_t$ with $L^2,L^2_{loc},L^2_{tb}$, respectively, while
in Definition \ref{topconv} the second condition is replaced by the convergence
$(v_n)_t\rightharpoonup v_t$ weakly in $L^2(0,M;V_{div}')$.
}
\end{oss}

We now consider the union of all weak solutions with external force $h$ (in the sense of
Definition \ref{wfdfn2} with $T=M$) satisfying inequality
\eqref{eist} on $[0,M]$ and we denote it by $\mathcal{K}^M_h$,
while $\mathcal{K}^+_h$ will be the union of all weak solutions in
$\mathcal{F}^+_{loc}$ with external force $h$ satisfying $\eqref{eist}$ on $[0,\infty)$.

The first result concerns with
the $(\Theta_M,L^2(0,M;V_{div}'))$-closure of the family
$\{\mathcal{K}^M_h,h\in L^2(0,M;V_{div}')\}$.
More precisely, we prove that the graph set
$$\bigcup_{h\in \mathcal{L}_M}\mathcal{K}^M_h\times\{h\}$$
is closed in the topological space $\Theta_M\times\mathcal{L}_M$, where $\mathcal{L}_M$ is $L^2(0,M;V_{div}')$
or $L^2_w(0,M;G_{div})$
if $d=3$, and $\mathcal{L}_M$ is $L^2_w(0,M;V_{div}')$ if $d=2$
(cf., for instance, \cite[Chap.~XV, Prop.~1.1]{CV}).


\begin{prop}
\label{traj1}
Let (A1)-(A3) hold. In addition, suppose that (A5) holds with  $p\in (\frac65,\frac32]$ if $d=3$ and
with $p\in(1,2)$ if $d=2$ and that
(A6) holds with $2q+2=p^\prime$. Let $h_m \in L^2(0,M,V^\prime_{div})$ and consider
$[v_m,\psi_m]\in \mathcal{K}^M_{h_m}$. Let $\{[v_m,\psi_m]\}$ converge to $[v,\psi]$
according to Definition \ref{topconv}. If
\begin{align*}
& h_m \rightharpoonup h\quad
\mbox{weakly} \mbox{ in }L^2(0,M;V^\prime_{div}), \quad d=2,\\
& h_m \to h\quad
\mbox{strongly} \mbox{ in }L^2(0,M;V^\prime_{div})
\mbox{ or }h_m \rightharpoonup h\mbox{ weakly in } L^2(0,M;G_{div}), \quad d=3,
\end{align*}
then $[v,\psi]\in \mathcal{K}^M_{h}$.
\end{prop}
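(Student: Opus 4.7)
The plan is to pass to the limit in both the weak formulation \eqref{wf1}--\eqref{wf2} and in the energy inequality \eqref{eist} satisfied by each $z_m := [v_m,\psi_m]\in\mathcal{K}^M_{h_m}$. First, from the $\Theta_M$-convergence one has uniform bounds on $v_m$ in $L^\infty(0,M;G_{div})\cap L^2(0,M;V_{div})$ with $\partial_t v_m$ bounded in $L^{4/3}(0,M;V'_{div})$ (in $L^2$ if $d=2$), and on $\psi_m$ in $L^\infty(0,M;L^{p'}(\Omega))\cap L^2(0,M;V)$ with $\partial_t\psi_m$ bounded in $L^2(0,M;V')$. The Aubin--Lions lemma, applied with the chains $V_{div}\hookrightarrow\hookrightarrow G_{div}\hookrightarrow V'_{div}$ and $V\hookrightarrow\hookrightarrow L^{p'}(\Omega)\hookrightarrow V'$ (the second being compact thanks to $p'<6$ in 3D), produces, up to a non-relabeled subsequence,
\begin{equation*}
v_m\to v \ \text{strongly in } L^2(0,M;G_{div}), \qquad \psi_m\to\psi \ \text{strongly in } L^2(0,M;L^{p'}(\Omega)),
\end{equation*}
and, after further extraction, the pointwise-in-time analogs $v_m(t)\to v(t)$ in $G_{div}$ and $\psi_m(t)\to\psi(t)$ in $L^{p'}(\Omega)$ for a.e. $t\in(0,M)$.

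With these compactness properties at hand, the passage to the limit in the variational identities \eqref{wf1}--\eqref{wf2} is standard: the Navier--Stokes convective term and the coupling $(v_m,\psi_m\nabla\chi)$ are passed to the limit by pairing a strong $L^2$ convergence with a weak one, while the viscous term is handled exactly as in the proof of Proposition \ref{gensem}, using that $\nu(\psi_m)\to\nu(\psi)$ strongly in every $L^q(0,M;L^q(\Omega))$ (by continuity of $\nu$, the uniform bound (A2), and dominated convergence) paired with the weak convergence of $Dv_m$ in $L^2$. The chemical potential $\mu_m=a\psi_m-J\ast\psi_m+F'(\psi_m)$ is uniformly bounded in $L^2(0,M;V)$ because of the energy inequality satisfied by $z_m$ (which bounds $\nabla\mu_m$) together with an estimate of $\overline{\mu_m}$ via (A5); hence $\mu_m\rightharpoonup\mu$ in $L^2(0,M;V)$. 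The identification $\mu=a\psi-J\ast\psi+F'(\psi)$ relies on (A5), which yields the polynomial growth $|F'(s)|\lesssim 1+|s|^{p'-1}$: combined with the strong $L^2(L^{p'})$ and a.e. convergence of $\psi_m$, Vitali's theorem gives $F'(\psi_m)\to F'(\psi)$ in $L^2(0,M;L^p(\Omega))$, which settles the identification.

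For the energy inequality \eqref{eist}, at a.e. $t\in(0,M)$ the pointwise strong convergences yield $\mathcal{E}(z_m(t))\to\mathcal{E}(z(t))$: the kinetic term converges by strong $G_{div}$ convergence, the nonlocal interaction term by the compactness of $J\ast\cdot\colon H\to H$, and $\int_\Omega F(\psi_m(t))\to\int_\Omega F(\psi(t))$ by dominated convergence thanks to the uniform $L^\infty(0,M;L^{p'}(\Omega))$ bound and the matching $p'=2+2q$ growth of $F$. On the LHS of \eqref{eist}, the two dissipation integrals are sequentially weakly lower semicontinuous, using the weak $L^2(0,M;H)$ convergences of $\nabla\mu_m$ and (as in Proposition \ref{gensem}) of $\sqrt{\nu(\psi_m)}Dv_m$. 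On the RHS, in the two $d=3$ regimes the pairing $\int_s^t\langle h_m,v_m\rangle\,d\tau\to\int_s^t\langle h,v\rangle\,d\tau$ follows at once from the classical strong-weak / weak-strong duality (strong $L^2(V'_{div})$ convergence of $h_m$ against weak $L^2(V_{div})$ convergence of $v_m$, or weak $L^2(G_{div})$ convergence of $h_m$ against strong $L^2(G_{div})$ convergence of $v_m$). Combining these facts and taking $\liminf_m$ of \eqref{eist} written for $z_m$ yields \eqref{eist} for $z$ at a.e. $s,t\in[0,M]$; the lower semicontinuity of $\mathcal{E}(z(\cdot))$ (Lemma \ref{templsc}) then upgrades the inequality to every $t\geq s$.

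The main technical obstacle is the two-dimensional forcing limit under only the weak $L^2(V'_{div})$ convergence of $h_m$, where one does not have strong convergence of $v_m$ in $L^2(V_{div})$ to pair with $h_m$ by duality. One must exploit the stronger 2D regularity $\partial_t v_m\in L^2(V'_{div})$, which enables a finer Aubin--Lions compactness of $\{v_m\}$, together with a splitting $\langle h_m,v_m\rangle=\langle h_m-h,v\rangle+\langle h_m,v_m-v\rangle$: the first summand vanishes by the weak convergence of $h_m$ tested against the fixed $v\in L^2(V_{div})$, whereas the second requires careful handling of the pure weak-weak pairing, to be dealt with using the extra compactness just mentioned.
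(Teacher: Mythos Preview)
Your treatment of the three-dimensional case is essentially the paper's argument: Aubin--Lions compactness from the $\Theta_M$-bounds, identification of $\mu$ via the growth control on $F'$ coming from (A5), passage to the limit in the weak formulation, and then in \eqref{eist} using strong convergence of $\psi_m(s)$ in $L^{p'}(\Omega)$ for a.e.\ $s$ on the right-hand side, weak lower semicontinuity of the dissipation integrals on the left, and the strong--weak / weak--strong pairing for the forcing term. This part is fine.

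The gap is in the two-dimensional case. You flag it as ``the main technical obstacle'' and try to pass to the limit in the energy inequality directly, splitting $\langle h_m,v_m\rangle-\langle h,v\rangle=\langle h_m-h,v\rangle+\langle h_m,v_m-v\rangle$. The second summand is a genuine weak--weak pairing: $h_m$ is only bounded in $L^2(0,M;V'_{div})$ and $v_m-v\to 0$ only weakly in $L^2(0,M;V_{div})$. The ``extra compactness'' you invoke (Aubin--Lions with $\partial_t v_m\in L^2(0,M;V'_{div})$) still yields strong convergence only in $L^2(0,M;G_{div})$, never in $L^2(0,M;V_{div})$, so it does not close the argument. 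As written, this step fails.

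The paper's resolution is different and much simpler: in dimension two one does \emph{not} pass to the limit in \eqref{eist} at all. Once the limit $[v,\psi]$ has been identified as a weak solution with force $h$ (which only requires weak convergence of $h_m$ in $L^2(0,M;V'_{div})$, paired with fixed test functions), Theorem~\ref{exist} guarantees that \emph{every} weak solution in the sense of Definition~\ref{wfdfn2} satisfies the energy identity \eqref{idendiffcor} when $d=2$. Hence \eqref{eist} for $[v,\psi]$ follows automatically from the weak formulation, and no limit in the energy inequality is needed. This is exactly what the paper means by ``If $d=2$, the situation is easier since the energy identity can be deduced from the weak formulation.'' You have inverted the difficulty of the two cases.
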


\begin{proof}
Consider first the case $d=3$.
Since $[v_m,\psi_m]\in \mathcal{K}^M_{h_m}$, each weak solution $z_m:=[v_m,\psi_m]$ is such that:
(i) $v_m\in L^{\infty}(0,M;G_{div})\cap L^2(0,M;V_{div})$,
$(v_m)_t\in L^{4/3}(0,M;V_{div}')$,
$\psi_m\in L^{\infty}(0,M;L^{2+2q}(\Omega))\cap L^2(0,M;V)$,
 $(\psi_m)_t\in L^2(0,M;V')$ (we are assuming $q\geq 1/2$),
$\mu_m\in L^2(0,M;V)$, where $\mu_m=a\psi_m-J\ast\psi_m+F'(\psi_m)$;
(ii) the weak formulation \eqref{wf1}, \eqref{wf2}, \eqref{chempot} for $[v_m,\psi_m]$
and $\mu_m$ holds with external force $h_m$, and (iii) the energy inequality
\begin{align}
&\mathcal{E}(z_m(t))+\int_s^t(2\|\sqrt{\nu(\psi_m)}Dv_m\|^2+\|\nabla\mu_m\|^2)d\tau\leq\mathcal{E}(z_m(s))
+\int_s^t\langle h_m(\tau),
u_m\rangle d\tau
\label{eim}
\end{align}
is satisfied for every $m\in\mathbb{N}$, for a.e. $s\in[0,M]$, including $s=0$, and for every
$t\in[0,M]$ with $t\geq s$.
Observe that the third convergence condition in Definition 3 is compatible with the regularity
property $\psi_m\in L^{\infty}(0,M;L^{2+2q}(\Omega))$ due to the constraint $2q+2=p'$.
Note that $q\geq 1/2$ since $p\leq 3/2$.

Thanks to the convergences listed in Definition \ref{topconv} and to the
polynomial control \eqref{growth}
on $F$ it is easy to see that there exists $c>0$ such that
\begin{equation}
\label{encontrol}
|\mathcal{E}(z_m(s))|\leq c,\qquad\forall m,\quad\mbox{a.e. }s\in(0,M)
\end{equation}
Hence, \eqref{eim} and the convergence assumption on the sequence $\{h_m\}$ entail
the control $\Vert \nabla\mu_m\Vert_{L^2(0,M;H)}\leq c$. Furthermore, since
$(\mu_m,1)=(F'(\psi_m),1)$ and since from (A4) we have
$|F'(\psi_m)|^p\leq c|F(\psi_m)|+c\leq c|\psi_m|^{p'}+c$ which, along with
the third convergence of Definition \ref{topconv}, implies that
\begin{equation}
\Vert F'(\psi_m)\Vert_{L^{\infty}(0,M;L^p(\Omega))}\leq c,
\end{equation}
we deduce that $|(\mu_m,1)|\leq c$ and therefore that $\Vert \mu_m \Vert_{L^2(0,M;V)}\leq c$.
Observe that we also have the estimate $\|F'(\psi_m)\|_{L^2(0,M;V)}\leq c$.
As a consequence, there exists $\mu\in L^2(0,M;V)$ such that for a subsequence we have
\begin{equation}
\mu_m\rightharpoonup\mu,\qquad\mbox{weakly in }L^2(0,M;V).
\label{convmu}
\end{equation}
Definition \ref{topconv} also implies that, up to subsequences, $\{[v_m,\psi_m]\}$
strongly converges to $[v,\psi]$ in $L^2(0,M;G_{div} \times H)$ and thus
$\{\psi_m\}$ also converges to $\psi$ almost everywhere in
$\Omega\times(0,M)$. We hence get that $\mu=a\psi-J\ast\psi+F'(\psi)$.
Now, on account of Definition \ref{topconv}, of the strong convergences
obtained above and of \eqref{convmu}, we can now pass to the limit in the variational
formulation \eqref{wf1}, \eqref{wf2}, \eqref{chempot} for the weak solution $[v_m,\psi_m]$
with external force $h_m$
and thus deduce that $[v,\psi]$ is a weak solution with external force $h$.

It remains to prove that the weak solution $[v,\psi]$ satisfies the energy inequality \eqref{eist}
on $[0,M]$ with external force $h$. To this aim we pass to the limit in \eqref{eim} as $m\to\infty$.
We exploit the convergence $h_m\to h$, strongly in
$L^2(0,M;V_{div}')$ and, in order to pass to the limit in the nonlinear functional term $\int_{\Omega}F(\psi_m(s))$
on the right hand side of \eqref{eim} we notice that, due to the third and fourth convergences assumed in Definition \ref{topconv} and to \eqref{Aubin-Lions}, we have that $\psi_n(s)\to\psi(s)$
strongly in $L^{p'}(\Omega)$ for a.e. $s>0$ and hence, since $p>6/5$,
we get $\int_{\Omega}F(\psi_m(s))\to\int_{\Omega}F(\psi(s))$ for a.e. $s>0$.
By also using \eqref{convmu}, the convergence
$$\sqrt{\nu(\psi_m)}Dv_m\rightharpoonup\sqrt{\nu(\psi)}Dv\qquad\mbox{weakly in }L^2(0,M;H),$$
(cf. \eqref{weakvisc}) and the lower semicontinuity of the $L^2(0,M;H)-$norm we thus get
that $[v,\psi]$ with external force $h$ satisfies \eqref{eist}
for a.e. $s\in[0,M]$, including $s=0$, and for every
$t\in[0,M]$ with $t\geq s$.
Hence $[v,\psi]\in\mathcal{K}^M_h$. The same conclusion holds if we suppose that $h_m\rightharpoonup h$ weakly in $L^2(0,M;G_{div})$. Indeed, arguing as in \cite[Chap.~XV, Prop.~1.1]{CV} and relying on the strong convergence
$u_m\to u$ in $L^2(0,M;G_{div})$ we have that
$$\int_s^t\langle h_m(\tau),u_m(\tau)\rangle d\tau\to
\int_s^t\langle h(\tau),u(\tau)\rangle d\tau,\qquad\mbox{as }m\to\infty.$$
If $d=2$, the situation is
easier since the energy identity can be deduced from the weak
formulation (see also \cite[Chap.~XV, proof of Prop.~1.1]{CV}).
\end{proof}

\begin{oss}
{\upshape
The main reason for assuming that
$\psi_n\rightharpoonup\psi$ weakly$^{\ast}$ in $L^{\infty}(0,M;L^{p^\prime}(\Omega))$
in Definition 3, rather then the apparently more natural convergence condition
$\psi_n\rightharpoonup\psi$ weakly$^{\ast}$ in $L^{\infty}(0,M;L^{2+2q}(\Omega))$,
is in order to ensure \eqref{encontrol}. Obviously, as pointed out above, the relation $p'=2+2q$
is needed.
}
\end{oss}

Consider now $h_0\in L^2_{tb}(0,\infty;V^\prime_{div})$ so that $h_0$ is translation
compact in $L^2_{loc,w}([0,\infty);V^\prime_{div})$. Then set
$$
\mathcal{H}_+(h_0) := \left[\left\{ h_0(\cdot+\tau)\,\vert\, \tau\geq 0\right\}\right]_{L^2_{loc,w}([0,\infty);V^\prime_{div})},
$$
where $[\cdot]_X$ denotes the closure in the space $X$.
The following property will be useful in the next proposition:
if $h_0\in L^2_{tb}(0,\infty;V_{div}')$ and $h\in\mathcal{H}_+(h_0)$, then $h\in L^2_{tb}(0,\infty;V_{div}')$ as well and
\begin{equation}
\|h\|_{L^2_{tb}(0,\infty;V_{div}')}\leq\|h_0\|_{L^2_{tb}(0,\infty;V_{div}')}.
\label{prophull}
\end{equation}
We recall that the translation semigroup $T(t)$ is continuous on $\mathcal{H}_+(h_0)$ and $T(t)\mathcal{H}_+(h_0)=\mathcal{H}_+(h_0)$
for all $t\geq 0$. This translation semigroup can also be defined on $\mathcal{K}^+_{h}$
for any $h\in \mathcal{H}_+(h_0)$. Indeed, if $[v,\psi]\in \mathcal{K}^M_{h}$ then $T(\tau)[v,\psi]\in
\mathcal{K}^{M-\tau}_{T(\tau)h}$, i.e.
$T(\tau)\mathcal{K}^M_{h} \subseteq \mathcal{K}^{M-\tau}_{T(\tau)h}$ for all $M\geq\tau\geq 0$.
Thus, recalling \cite[Chap.~XIV, Props.~1.1 and 1.2]{CV}, we have, for all $t\geq 0$,
\begin{equation*}
T(t)\mathcal{K}^+_{h} \subseteq \mathcal{K}^+_{T(t)h},  \quad
T(t)\mathcal{K}^+_{\mathcal{H}_+(h_0)}\subseteq \mathcal{K}^+_{\mathcal{H}_+(h_0)}.
\end{equation*}
where $\mathcal{K}^+_{\mathcal{H}_+(h_0)}:= \bigcup_{h\in \mathcal{H}_+(h_0)} \mathcal{K}^+_{h}$
is the so-called {\it united trajectory space} (see \cite[Chap.~XIV, Def.~1.2]{CV}).

We can now prove the following (see \cite[Chap.~XV, Prop.~1.2]{CV})

\begin{prop}
\label{traj2} Let (A1)-(A3) hold. In addition, suppose that (A5) holds with  $p\in (1,\frac32]$ if $d=3$
and with $p\in(1,2)$ if $d=2$ and that
(A6) holds with $2q=p^\prime -2$. If $h_0 \in L^2_{tb}(0,\infty;V^\prime_{div})$ then, for all $h\in \mathcal{H}_+(h_0)$, we have $\mathcal{K}^+_{h} \subset \mathcal{F}^+_{b}$ and the following dissipative estimate
holds
\begin{equation}
\label{trajest}
\Vert T(t)[v,\psi]\Vert_{\mathcal{F}^+_{b}} \leq \Lambda_0 \Vert [v,\psi]\Vert_{L^\infty(0,1;G_{div}\times L^{p'}(\Omega))}
e^{-\kappa t} + \Lambda_1,
\end{equation}
for all $t\geq 1$ and all $[v,\psi]\in  \mathcal{K}^+_{h}$. Here $\Lambda_0$, $\kappa$ and $\Lambda_1$ are
positive constants with $k=\min(1/2,\lambda_1\nu)$ and
$\Lambda_0$, $\Lambda_1$ depending on $\nu_1,\nu_2,\lambda_1,F,J,|\Omega|$,
with $\Lambda_1$ depending also on $\|h_0\|_{L^2_{tb}(0,\infty;V_{div}')}$.
\end{prop}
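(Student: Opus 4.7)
The strategy is to combine the exponential dissipation of the energy provided by Theorem \ref{energyineq} with integrated (unit--interval) estimates for $\nabla v$, $\nabla\mu$, $v_t$ and $\psi_t$, so as to control each of the four norms that enter $\|\cdot\|_{\mathcal{F}^+_{b}}$.

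\textbf{Step 1 (uniform exponential decay of the energy).} By definition every $[v,\psi]\in\mathcal{K}^+_h$ satisfies the energy inequality \eqref{eist} on $[0,\infty)$, and by \eqref{prophull} we have $\|h\|_{L^2_{tb}(0,\infty;V_{div}')}\leq\|h_0\|_{L^2_{tb}(0,\infty;V_{div}')}$ for every $h\in\mathcal{H}_+(h_0)$. Hence Theorem \ref{energyineq} applies with a constant $K$ depending only on $\|h_0\|_{L^2_{tb}}$ and the data of the problem, and for every $s\in[0,1]$ at which \eqref{eist} holds one obtains $\mathcal{E}(z(t))\leq\mathcal{E}(z(s))\,e^{-k(t-s)}+K$ for all $t\geq s$. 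Assumption (A6) with $2q+2=p'$ gives $F(s)\geq c|s|^{p'}-c'$, so the energy controls $\|v\|^2+\|\psi\|_{L^{p'}}^{p'}$ from below (up to absorbable constants), while Remark \ref{Fgrowth} gives the matching polynomial upper bound $\mathcal{E}(z(s))\leq C(\|v(s)\|^2+\|\psi(s)\|_{L^{p'}}^{p'}+1)$. Choosing $s\in[0,1]$ appropriately therefore yields exponentially decaying $L^\infty$ control of $\|v(t)\|$ and $\|\psi(t)\|_{L^{p'}}$ on $[s,\infty)$, in the form required by \eqref{trajest}.

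\textbf{Step 2 (translation bounds on gradients).} Writing \eqref{eist} between $s$ and $s+1$ and using Young's inequality on $\int_s^{s+1}\langle h,v\rangle$, the results of Step 1 yield, uniformly in $s$,
\begin{equation*}
\int_s^{s+1}\bigl(\|\nabla v\|^2+\|\nabla\mu\|^2\bigr)d\tau
\leq C\bigl(\mathcal{E}(z(s))-\mathcal{E}(z(s+1))\bigr)+C',
\end{equation*}
which, combined with the exponential bound on $\mathcal{E}(z(s))$, produces $L^2_{tb}$ control on $\nabla v$ and $\nabla\mu$. To convert this into a bound on $\nabla\psi$ we differentiate $\mu=a\psi-J\ast\psi+F'(\psi)$ to get $\bigl(a+F''(\psi)\bigr)\nabla\psi=\nabla\mu-(\nabla a)\psi+(\nabla J)\ast\psi$; the coercivity (A3), together with $\|\nabla a\|_{L^\infty}\leq\|\nabla J\|_{L^1}$ and Young's convolution inequality, gives $\|\nabla\psi\|\leq C(\|\nabla\mu\|+\|\psi\|)$, so $\nabla\psi\in L^2_{tb}$ as well.

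\textbf{Step 3 (time derivatives).} From the variational formulation \eqref{wf2} one has, in 3D,
\begin{equation*}
\|v_t\|_{V_{div}'}\leq C\bigl(\|v\|^{1/2}\|\nabla v\|^{3/2}+\|\nabla v\|+\|\psi\|_{L^3}\|\nabla\mu\|+\|h\|_{V_{div}'}\bigr),
\end{equation*}
by \eqref{standest3D}, (A2) and the duality $\|\psi\nabla\mu\|_{V_{div}'}\leq C\|\psi\|_{L^3}\|\nabla\mu\|$ (using $V_{div}\hookrightarrow L^6$). Each term is translation-bounded in $L^{4/3}$: the cubic one by Hölder in time using the $L^\infty$ bound on $\|v\|$ and the $L^2_{tb}$ bound on $\|\nabla v\|$; the coupling one thanks to the continuous embedding $L^{p'}\hookrightarrow L^3$, which holds precisely because $p\leq 3/2$ forces $p'\geq 3$, and the $L^2_{tb}$ bound on $\|\nabla\mu\|$; the forcing one by assumption on $h_0$. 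In 2D the cubic term is replaced by the better estimate \eqref{standest2D}, giving $v_t\in L^2_{tb}$. For $\psi_t$, \eqref{wf1} yields $\|\psi_t\|_{V'}\leq\|\nabla\mu\|+\|v\psi\|$, and $\|v\psi\|\leq\|v\|_{L^6}\|\psi\|_{L^3}\leq C\|\nabla v\|\|\psi\|_{L^3}$ is in $L^2_{tb}$ by the previous steps. Assembling the $L^\infty$ estimates of Step 1 with the $L^2_{tb}/L^{4/3}_{tb}$ estimates of Steps 2--3 on the window $[t,\infty)$ gives both $\mathcal{K}^+_h\subset\mathcal{F}^+_b$ and \eqref{trajest}, with $\kappa=\min(1/2,\lambda_1\nu_1)$ as in Theorem \ref{energyineq} and all constants depending on $\|h_0\|_{L^2_{tb}}$ only (through \eqref{prophull}).

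\textbf{Main obstacle.} The delicate point is Step 3 in three dimensions: the mixed term $\psi\nabla\mu$ in the momentum equation has to be estimated in the dual space $V_{div}'$ using only the a priori information at hand, namely $\psi\in L^\infty(0,\infty;L^{p'}(\Omega))\cap L^2_{tb}(0,\infty;V)$ and $\nabla\mu\in L^2_{tb}(0,\infty;H)$. The exponent restriction $p\in(1,3/2]$ (equivalently $p'\geq 3$ and $q\geq 1/2$) is precisely what guarantees the embedding $L^{p'}\hookrightarrow L^3$ needed to ensure that $\|\psi\nabla\mu\|_{V_{div}'}$ is translation-bounded in $L^2$, hence \emph{a fortiori} in $L^{4/3}$, as demanded by the definition of $\mathcal{F}^+_b$; without it one cannot close the estimate for $v_t$.
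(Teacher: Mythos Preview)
Your proof is correct and follows essentially the same route as the paper: exponential decay of $\mathcal{E}$ via the integral Gronwall argument behind Theorem~\ref{energyineq}, then unit-interval integration of \eqref{eist} for the $L^2_{tb}$ gradient bounds, then comparison in \eqref{wf1}--\eqref{wf2} for the time derivatives. The only (minor) difference is in the handling of the Korteweg term $\psi\nabla\mu$ for $d=3$: you bound $\|\psi\|_{L^3}$ directly by $\|\psi\|_{L^{p'}}$ via the embedding $L^{p'}\hookrightarrow L^3$ (valid since $p\leq 3/2$ forces $p'\geq 3$), whereas the paper interpolates $\|\varphi\|_{L^3}\leq c\|\varphi\|^{1/2}\|\varphi\|_V^{1/2}$ and then applies H\"older in time; your route is slightly more direct and even yields the stronger $L^2_{tb}$ bound on this term, but both close the estimate for $v_t$ in $L^{4/3}_{tb}(0,\infty;V_{div}')$ and identify the same role for the restriction $p\leq 3/2$.
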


\begin{proof}
Take $[v,\psi]\in  \mathcal{K}^+_{h}$. Then, by definition $z:=[v,\psi]$ is a weak solution
corresponding to the external force $h$ satisfying \eqref{eist} and hence
\eqref{eiconseq} on $[0,\infty)$. By applying Lemma \ref{Ballgen} we get
\begin{equation}
\mathcal{E}(z(t))
\leq \mathcal{E}(z(s))e^{-k(t-s)}
+\frac{1}{2\nu_1}\int_s^t e^{-k(t-\tau)}\left(\|h(\tau)\|_{V_{div}'}^2 + 2\nu_1 l \right)d\tau,
\nonumber
\end{equation}
for a.e. $s\geq 0$, including $s=0$ and for every $t\geq s$.
The constants $k,l$ are the same as in the proof of Theorem \ref{energyineq}.
In particular we have $k=\min(1/2,\lambda_1\nu_1)$.
Thus, we deduce
\begin{equation}
\label{trajdiss1}
\mathcal{E}(z(t))
\leq e^k\sup_{s\in (0,1)}\mathcal{E}(z(s))e^{-kt}
+\frac{1}{2\nu_1}\int_0^t e^{-k(t-\tau)}\left(\|h(\tau)\|_{V_{div}'}^2 + 2\nu_1 l \right)d\tau,
\end{equation}
for all $t\geq 1$. Now, notice that, due to \eqref{growth}, (A5) and to the assumption $p'=2+2q$
there exist two constants $k_1,k_2>0$ depending on $F$ and $J$ such that
\begin{eqnarray}
& &k_1(\|\varphi(s)\|_{L^{p'}(\Omega)}^{p'}+\|u(s)\|^2-1)\leq\mathcal{E}(z(s))\leq k_2(\|\varphi(s)\|_{L^{p'}(\Omega)}^{p'}+\|u(s)\|^2+1).
\label{contrE}
\end{eqnarray}
Henceforth $c$ will stand for a positive constant, that may vary from line to line,
that depends on $\nu_1,\lambda_1,F,J$ and $|\Omega|$.
From \eqref{trajdiss1} we obtain
\begin{align}
&
\|\varphi(t)\|_{L^{p'}(\Omega)}^{p'}+\|u(t)\|^2\leq
c(\|\varphi\|_{L^{\infty}(0,1;L^{p'}(\Omega))}^{p'}+\|u\|_{L^{\infty}(0,1;G_{div})}^2)e^{-kt}\nonumber\\
&
+\frac{1}{2\nu_1}\int_0^t e^{-k(t-\tau)}\|h(\tau)\|_{V_{div}'}^2 d\tau + \frac{l}{k}+c,\qquad\forall t\geq 1,
\label{q1}
\end{align}
which immediately leads to
\begin{align}
 &
 \|T(t)\varphi\|_{L^{\infty}(0,\infty;L^{p'}(\Omega))}^{p'}+\|T(t)u\|_{L^{\infty}(0,\infty;G_{div})}^2
\leq c(\|\varphi\|_{L^{\infty}}^{p'}+\|u\|_{L^{\infty}}^2)e^{-kt}
+K+c,
\label{1term}
\end{align}
for all $t\geq 1$,
where we have set $\|\varphi\|_{L^{\infty}}:=\|\varphi\|_{L^{\infty}(0,1;L^{p'}(\Omega))}$
and $\|u\|_{L^{\infty}}:=\|u\|_{L^{\infty}(0,1;G_{div})}$. The constant
$K$ also depends on $h_0$ and has the following form (see \eqref{constK} and \eqref{prophull})
$$K=\frac{1}{2\nu_1}\frac{1}{1-e^{-k}}\|h_0\|_{L^2_{tb}(0,\infty;V_{div}')}^2+\frac{l}{k}.$$
By \eqref{eist} we have, for a.e. $t\geq 0$ (including $t=0$),
\begin{align}
&
\int_t^{t+1}\Big(\frac{\nu_1}{2}\Vert\nabla u\Vert^2+\Vert\nabla\mu\Vert^2\Big)d\tau
\leq\mathcal{E}(z(t))-\mathcal{E}(z(t+1))+\frac{1}{2\nu_1}\int_t^{t+1}\|h(\tau)\|_{V_{div}'}^2d\tau.
\label{q2}
\end{align}
Furthermore, by means of (A2) and by multiplying the gradient of \eqref{chempot}
by $\nabla\varphi$, it can be shown that (see \cite{CFG})
\begin{equation}
\|\nabla\mu\|^2\geq k_3\|\nabla\varphi\|^2-k_4\|\varphi\|^2,
\label{q3}
\end{equation}
where $k_3=c_0^2/4$ and $k_4=2\|\nabla J\|_{L^1}^2$.
Therefore, combining \eqref{q2} and \eqref{q3} with \eqref{contrE} and \eqref{q1} we get
\begin{equation}
\int_t^{t+1}\Big(\frac{\nu_1}{2}\Vert\nabla u\Vert^2+k_3\Vert\nabla\varphi\Vert^2\Big)d\tau
\leq c(\|\varphi\|_{L^{\infty}}^{p'}+\|u\|_{L^{\infty}}^2)e^{-kt}
+cK+c,
\label{eniqtt+1}
\end{equation}
from which we deduce
\begin{equation}
\|T(t)\varphi\|_{L^2_{tb}(0,\infty;V)}^2+\|T(t)u\|_{L^2_{tb}(0,\infty;V_{div})}^2
\leq c(\|\varphi\|_{L^{\infty}}^{p'}+\|u\|_{L^{\infty}}^2)e^{-kt}
+cK+c,
\label{2term}
\end{equation}
for all $t\geq 1$.
Let us come to the terms in \eqref{trajest} containing the time derivatives. As far as the contribution
arising from the Korteweg force term is concerned, since, if $d=3$ we have
\begin{align*}
&
\|\varphi\nabla\mu\|_{V_{div}'}\leq c\|\varphi\|_{L^3(\Omega)}\|\nabla\mu\|
\leq c\Vert \varphi\Vert^{1/2}\Vert\varphi\Vert_{L^6(\Omega)}^{1/2}\Vert\nabla\mu\Vert
\leq c\Vert \varphi\Vert^{1/2}\Vert\varphi\Vert_V^{1/2}\Vert\nabla\mu\Vert,
\end{align*}
then, on account of \eqref{q1}, of the fact that $\|\varphi\|\leq\|\varphi\|_{L^{p'}(\Omega)}^{p'/2}+c$,
and of \eqref{q2}, \eqref{eniqtt+1} we get
\begin{align}
&\Big (\int_t^{t+1}\|\varphi\nabla\mu\|_{V_{div}'}^{4/3}d\tau\Big)^{3/4}
\leq c\|\varphi\|_{L^{\infty}(t,t+1;H)}^{1/2}\Big(\int_t^{t+1}\|\varphi\|_V^2d\tau\Big)^{1/4}
\Big(\int_t^{t+1}\|\nabla\mu\|^2d\tau\Big)^{1/2}\nonumber\\
&\leq\Big(c(\|\varphi\|_{L^{\infty}}^{p'}+\|u\|_{L^{\infty}}^2)e^{-kt}
+K+c\Big)^{1/4}\Big(c(\|\varphi\|_{L^{\infty}}^{p'}+\|u\|_{L^{\infty}}^2)e^{-kt}
+cK+c\Big)^{1/4}\nonumber\\
&
\Big(c(\|\varphi\|_{L^{\infty}}^{p'}+\|u\|_{L^{\infty}}^2)e^{-kt}
+cK+c\Big)^{1/2}\leq c(\|\varphi\|_{L^{\infty}}^{p'}+\|u\|_{L^{\infty}}^2)e^{-kt}
+cK+c,
\label{kort1}
\end{align}
for all $t\geq 1$.
If $d=2$ then we have
\begin{align*}
&
\|\varphi\nabla\mu\|_{V_{div}'}\leq c\Vert\varphi\Vert_{L^{2+2q}(\Omega)}\Vert\nabla\mu\Vert
= c\Vert\varphi\Vert_{L^{p'}(\Omega)}\Vert\nabla\mu\Vert.
\end{align*}
Therefore, recalling \eqref{q1}, \eqref{eniqtt+1} and \eqref{q2}, we obtain
\begin{align}
&\Big (\int_t^{t+1}\|\varphi\nabla\mu\|_{V_{div}'}^2 d\tau\Big)^{1/2}
\leq c\|\varphi\|_{L^{\infty}(t,t+1;L^{p'}(\Omega))}\Big(\int_t^{t+1}\|\nabla\mu\|^2 d\tau\Big)^{1/2}\nonumber\\
&\leq\Big(c(\|\varphi\|_{L^{\infty}}^{p'}+\|u\|_{L^{\infty}}^2)e^{-kt}
+K+c\Big)^{1/2}\Big(c(\|\varphi\|_{L^{\infty}}^{p'}+\|u\|_{L^{\infty}}^2)e^{-kt}
+cK+c\Big)^{1/2}\nonumber\\
&\leq c(\|\varphi\|_{L^{\infty}}^{p'}+\|u\|_{L^{\infty}}^2)e^{-kt}
+cK+c,\qquad\forall t\geq 1.
\label{kort2}
\end{align}
Therefore \eqref{kort1} and \eqref{kort2} for $d=3,2$ entail
\begin{align}
&\|T(t)(\varphi\nabla\mu)\|_{L^{4/d}_{tb}(0,\infty;V_{div}')}\leq
c(\|\varphi\|_{L^{\infty}}^{p'}+\|u\|_{L^{\infty}}^2)e^{-kt}
+cK+c,\qquad\forall t\geq 1.
\label{uttb1}
\end{align}
Furthermore, for $d=3$, recalling \eqref{standest3D} we have
\begin{align*}
&
\Big(\int_t^{t+1}\|\mathcal{B}(u,u)\|_{V_{div}'}^{4/3}d\tau\Big)^{3/4}
\leq  c\|u\|_{L^{\infty}(t,t+1;G_{div})}^{1/2}\Big(\int_t^{t+1}\|\nabla u\|^2d\tau\Big)^{3/4}\nonumber\\
&\leq\Big(c(\|\varphi\|_{L^{\infty}}^{p'}+\|u\|_{L^{\infty}}^2)e^{-kt}
+K+c\Big)^{1/4}\Big(c(\|\varphi\|_{L^{\infty}}^{p'}+\|u\|_{L^{\infty}}^2)e^{-kt}
+cK+c\Big)^{3/4}\nonumber\\
&\leq c(\|\varphi\|_{L^{\infty}}^{p'}+\|u\|_{L^{\infty}}^2)e^{-kt}
+cK+c,\qquad\forall t\geq 1,
\end{align*}
while, for $d=2$, recalling \eqref{standest2D} we obtain
\begin{align*}
&
\Big(\int_t^{t+1}\|\mathcal{B}(u,u)\|_{V_{div}'}^2 d\tau\Big)^{1/2}
\leq  c\|u\|_{L^{\infty}(t,t+1;G_{div})}\Big(\int_t^{t+1}\|\nabla u\|^2d\tau\Big)^{1/2}\nonumber\\
&\leq c(\|\varphi\|_{L^{\infty}}^{p'}+\|u\|_{L^{\infty}}^2)e^{-kt}
+cK+c,\qquad\forall t\geq 1.
\end{align*}
Hence, for $d=3,2$ we get
\begin{align}
&\|T(t)\mathcal{B}(u,u)\|_{L^{4/d}_{tb}(0,\infty;V_{div}')}
\leq c(\|\varphi\|_{L^{\infty}}^{p'}+\|u\|_{L^{\infty}}^2)e^{-kt}
+cK+c,\qquad\forall t\geq 1.
\label{uttb2}
\end{align}
Recalling equation \eqref{sy3} which can be written as
$$
u_t+\mathcal{A}(u,\varphi)+\mathcal{B}(u,u)=-\varphi\nabla\mu+h\qquad\mbox{in }V_{div}',
\;\mbox{a.e. in } (0,\infty),
$$
we deduce by comparison that
\begin{align*}
\|u_t\|_{L^{4/d}(t,t+1;V_{div}')}&\leq\nu_2\|u\|_{L^2(t,t+1;V_{div})}
+\|\mathcal{B}(u,u)\|_{L^{4/d}(t,t+1;V_{div}')}\\
&+\|\varphi\nabla\mu\|_{L^{4/d}(t,t+1;V_{div}')}+\|h\|_{L^2(t,t+1;V_{div}')}.
\end{align*}
Therefore, using \eqref{uttb1} and \eqref{uttb2}, we obtain
\begin{align}
&
\|T(t)u_t\|_{L^{4/d}_{tb}(0,\infty;V_{div}')}\leq
 c(\|\varphi\|_{L^{\infty}}^{p'}+\|u\|_{L^{\infty}}^2)e^{-kt}
 +cK+c,\qquad\forall t\geq 1.
 \label{3term}
\end{align}
Now, from \eqref{sy1}, for $d=3$ we can write
$$\|\varphi_t\|_{V'}\leq \|\nabla\mu\|+c\|u\|_{L^{2(1+\frac1q)}(\Omega)}\|\varphi\|_{L^{2+2q}(\Omega)}
\leq \|\nabla\mu\|+c\|\nabla u\|\|\varphi\|_{L^{p'}(\Omega)},$$
while, for $d=2$ we have
$$\|\varphi_t\|_{V'}\leq \|\nabla\mu\|+c\|\nabla u\|\|\varphi\|_{L^{2+2q}(\Omega)}
= \|\nabla\mu\|+c\|\nabla u\|\|\varphi\|_{L^{p'}(\Omega)}.$$
The contribution from the transport term gives
\begin{align*}
&\Big(\int_t^{t+1}\|\nabla u\|^2\|\varphi\|_{L^{p'}(\Omega)}^2 d\tau\Big)^{1/2}
\leq\|\varphi\|_{L^{\infty}(t,t+1;L^{p'}(\Omega))}\Big(\int_t^{t+1}\|\nabla u\|^2d\tau\Big)^{1/2}
\nonumber\\
&\leq
 c(\|\varphi\|_{L^{\infty}}^{p'}+\|u\|_{L^{\infty}}^2)e^{-kt}
 +cK+c,\qquad\forall t\geq 1.
\end{align*}
Thus, in both cases $d=2$ and $d=3$, we find
\begin{align}
&\|T(t)\varphi_t\|_{L^2_{tb}(0,\infty;V')}\leq
c(\|\varphi\|_{L^{\infty}}^{p'}+\|u\|_{L^{\infty}}^2)e^{-kt}
 +cK+c,\qquad\forall t\geq 1.
 \label{4term}
\end{align}
Finally, collecting \eqref{1term}, \eqref{2term}, \eqref{3term} and \eqref{4term},
we get \eqref{trajest} with $\Lambda_0=c$ and $\Lambda_1=cK+c$.
\end{proof}

Propositions \ref{traj1} and \ref{traj2} are the basic ingredients to establish next theorem,
which is the main result
of this section. We denote by $Z(h_0)$ the set of all complete symbols in $\mathcal{H}_+(h_0)$,
i.e., the set of functions $\zeta:\mathbb{R}\to V_{div}'$, $\zeta\in L^2_{loc}(\mathbb{R};V_{div}')$
such that $\Pi_+T(t)\zeta\in\omega(\mathcal{H}_+(h_0))$, for all $t\in\mathbb{R}$,
where $\Pi_+$ is the restriction operator to the semiaxis $[0,\infty)$.
To any complete symbol $\zeta\in Z(h_0)$ there corresponds, by \cite[Chap XIV, Definition 2.5]{CV}
(see also \cite[Definition 4.4]{CV2}), the kernel $\mathcal{K}_{\zeta}$ which consists of all weak solutions $z:\mathbb{R}\to G_{div}\times H$
with external force $\zeta$ (in the sense of Definition \ref{wfdfn2} with $T\in\mathbb{R}$)
satisfying inequality \eqref{eist} on $\mathbb{R}$ and that are bounded in the space $\mathcal{F}_b$
(the space $\mathcal{F}_b$ is defined as $\mathcal{F}_b^+$ with the time semiaxis $[0,\infty)$ replaced with $\mathbb{R}$ in the definition of $\mathcal{F}_b^+$; in the same way $\mathcal{F}_{loc}$ and $\Theta_{loc}$
can be defined). Then, we set
$$\mathcal{K}_{Z(h_0)}:=\bigcup_{\zeta\in Z(h_0)}\mathcal{K}_{\zeta}.$$

\begin{thm}
\label{traj3} Let (A1)-(A3) hold. In addition, suppose that (A5) holds with  $p\in (\frac65,\frac32]$ if $d=3$ and
with $p\in (1,2)$ if $d=2$ and that
(A6) holds with $2q+2=p^\prime$. If
\begin{align*}
& h_0 \in L^2_{tb}(0,\infty;V^\prime_{div}), \quad d=2,\\
& h_0 \in L^2_{tb}(0,\infty;G_{div}), \quad d=3,
\end{align*}
then $\{T(t)\}$ acting on $\mathcal{K}^+_{\mathcal{H}(h_0)}$
possesses the uniform (with respect to $h\in \mathcal{H}(h_0)$)
trajectory attractor $\mathcal{U}_{\mathcal{H}(h_0)}$. This set is
bounded in $\mathcal{F}^+_{b}$ and compact in
$\Theta^+_{loc}$. Moreover , we have
$$\mathcal{U}_{\mathcal{H}(h_0)}=\mathcal{U}_{\omega(\mathcal{H}_{+}(h_0))}=\mathcal{K}_{Z(h_0)},$$
where $\mathcal{U}_{\omega(\mathcal{H}_{+}(h_0))}$ is the uniform (with respect to $h\in\omega(\mathcal{H}_{+}(h_0))$)
trajectory attractor of the family $\{\mathcal{K}_h^+:h\in\omega(\mathcal{H}_{+}(h_0))\}$,
$\mathcal{U}_{\omega(\mathcal{H}_{+}(h_0))}\subset\mathcal{K}_{\omega(\mathcal{H}_{+}(h_0)}^+$.
The kernel $\mathcal{K}_{\zeta}$ is not empty for any $\zeta\in Z(h_0)$; the set $\mathcal{K}_{Z(h_0)}$
is bounded in $\mathcal{F}_b$ and compact in $\Theta_{loc}$.
\end{thm}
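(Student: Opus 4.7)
The plan is to apply the abstract theory of uniform trajectory attractors from \cite[Chap.~XIV--XV]{CV}. With Propositions \ref{traj1} and \ref{traj2} in hand, the two essential hypotheses needed for the general machinery have already been verified; what remains is the task of checking the abstract requirements (translation invariance of the trajectory space, existence of a uniformly absorbing set, precompactness in $\Theta^+_{loc}$, and closure under the topologies in which the attractor is defined). First I would recall that, since $h_0\in L^2_{tb}(0,\infty;V'_{div})$ (resp.~$L^2_{tb}(0,\infty;G_{div})$ if $d=3$), the hull $\mathcal{H}_+(h_0)$ is compact in $L^2_{loc,w}([0,\infty);V'_{div})$ (resp.~$L^2_{loc,w}([0,\infty);G_{div})$), the translation semigroup $T(t)$ is continuous on it, and $T(t)\mathcal{H}_+(h_0)=\mathcal{H}_+(h_0)$. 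In addition, since $T(t)\mathcal{K}^+_h\subseteq\mathcal{K}^+_{T(t)h}$ for every $h\in\mathcal{H}_+(h_0)$, the united trajectory space $\mathcal{K}^+_{\mathcal{H}_+(h_0)}$ is invariant under $\{T(t)\}_{t\geq 0}$.

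Next, the dissipative estimate \eqref{trajest} from Proposition \ref{traj2}, together with the bound \eqref{prophull} for elements of the hull, furnishes a uniformly (with respect to $h\in\mathcal{H}_+(h_0)$) absorbing set in $\mathcal{F}^+_b$: indeed, picking any $R>2\Lambda_1$ and setting
\begin{equation*}
B_0:=\Big\{[v,\psi]\in\mathcal{F}^+_b\,:\,\|[v,\psi]\|_{\mathcal{F}^+_b}\leq R\Big\},
\end{equation*}
one checks from \eqref{trajest} that for any bounded $B\subset\mathcal{F}^+_b$ there exists $t_*=t_*(B)$ such that $T(t)(B\cap\mathcal{K}^+_{\mathcal{H}_+(h_0)})\subseteq B_0$ for all $t\geq t_*$. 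Note that $B_0$ is bounded in $\mathcal{F}^+_b$ but we still have to realize it as a subset of $\Theta^+_{loc}$; this is automatic from the embedding $\mathcal{F}^+_b\hookrightarrow\Theta^+_{loc}$.

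The compactness step comes from combining boundedness of $B_0$ in $\mathcal{F}^+_b$ with the Aubin--Lions lemma on each finite interval $[0,M]$: the uniform bounds on $v_t$ in $L^{4/3}$ (or $L^2$ if $d=2$) and on $\psi_t$ in $L^2$, together with the uniform bounds on $v,\psi$ in $L^2(0,M;V_{div}\times V)$ and $L^\infty(0,M;G_{div}\times L^{p'}(\Omega))$, imply that $B_0$ is precompact in $\Theta_M$ for each $M$, hence precompact in $\Theta^+_{loc}$. The crucial closure property needed to conclude the existence of the attractor is exactly Proposition \ref{traj1}: if a sequence $\{[v_m,\psi_m]\}\subset\mathcal{K}^M_{h_m}$ converges in $\Theta_M$ to $[v,\psi]$ and $h_m$ converges to $h$ in the appropriate (strong or weak) topology of $\mathcal{L}_M$, then $[v,\psi]\in\mathcal{K}^M_h$. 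This guarantees that $\mathcal{K}^+_{\mathcal{H}_+(h_0)}$ is closed in $\Theta^+_{loc}$ (regarded as a graph over $\mathcal{H}_+(h_0)$) so that $\omega(B_0)$, which is contained in this graph, indeed consists of trajectories attached to symbols in $\mathcal{H}_+(h_0)$. Then the abstract result \cite[Chap.~XIV, Thm.~2.1]{CV} applied to the evolution semigroup $\{T(t)\}$ on $\mathcal{K}^+_{\mathcal{H}_+(h_0)}$ yields the existence of the uniform trajectory attractor $\mathcal{U}_{\mathcal{H}(h_0)}$, bounded in $\mathcal{F}^+_b$ and compact in $\Theta^+_{loc}$.

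Finally, the identification $\mathcal{U}_{\mathcal{H}(h_0)}=\mathcal{U}_{\omega(\mathcal{H}_+(h_0))}=\mathcal{K}_{Z(h_0)}$ follows from the translation invariance of $\mathcal{U}_{\mathcal{H}(h_0)}$, which allows one to extend each trajectory $z\in\mathcal{U}_{\mathcal{H}(h_0)}$ to a complete trajectory $\hat z:\mathbb{R}\to G_{div}\times H$ belonging to $\mathcal{F}_b$ and attached to a complete symbol $\zeta\in Z(h_0)$; conversely, any $z\in\mathcal{K}_\zeta$ restricted to $[0,\infty)$ belongs to $\mathcal{U}_{\mathcal{H}(h_0)}$ by boundedness in $\mathcal{F}_b$ and minimality of the attractor. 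The non-emptiness of $\mathcal{K}_\zeta$ for every $\zeta\in Z(h_0)$ is obtained by a standard diagonal argument: take $z_n\in\mathcal{K}^+_{T(-n)\zeta}$ originating from an element of $B_0$, translate by $n$ to the past, and use Proposition \ref{traj1} together with the precompactness in $\Theta_{loc}$ to extract a complete limiting trajectory. I expect the main obstacle to be purely notational: ensuring that the topology $\Theta^+_{loc}$ (with its mixed weak/weak-$\ast$ components and the $L^\infty(0,M;L^{p'}(\Omega))$ constraint tied to the growth of $F$ via $2q+2=p'$) is handled consistently through all the convergence arguments, particularly in verifying that $B_0$ is closed and precompact in the same topology — this is where the growth condition on $F$ and the careful choice made in Definition \ref{topconv} play a decisive role.
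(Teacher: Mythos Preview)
Your proposal is correct and follows essentially the same route as the paper: you verify closedness of the trajectory family via Proposition~\ref{traj1}, produce a uniformly absorbing ball in $\mathcal{F}^+_b$ from the dissipative estimate \eqref{trajest} of Proposition~\ref{traj2}, note its compactness in $\Theta^+_{loc}$, and then invoke the abstract machinery of \cite[Chap.~XIV, Thms.~2.1 and 3.1]{CV}. The only minor remark is that precompactness of $B_0$ in $\Theta_M$ does not require Aubin--Lions (the topology of $\Theta_M$ is defined by weak and weak-$\ast$ convergences, so boundedness plus Banach--Alaoglu and separability already suffice); Aubin--Lions is rather what underlies the passage to the limit inside Proposition~\ref{traj1}.
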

\begin{proof}
The family of trajectory spaces
$\{\mathcal{K}_h^+:h\in\mathcal{H}_+(h_0)\}$ is $(\Theta_{loc}^+,\mathcal{H}_+(h_0))-$closed
due to Proposition \ref{traj1}.
Notice that the assumption on $h_0$ ensures that the symbol space $\Sigma:=\mathcal{H}_+(h_0)$
is a compact metric space.
Thanks to \eqref{trajest} it is easy to see that the ball
$$B_{\mathcal{F}_b^+}(2\Lambda_1):=\{[v,\psi]\in\mathcal{F}_b^+:\Vert[v,\psi]\Vert_{\mathcal{F}_b^+}\leq 2\Lambda_1\}$$
is a uniformly (w.r.t. $h\in\mathcal{H}_+(h_0)$) absorbing set for the family
$\{\mathcal{K}_h^+:h\in\mathcal{H}_+(h_0)\}$. The ball $B_{\mathcal{F}_b^+}(2\Lambda_1)$
is compact in $\Theta_{loc}^+$ and bounded in $\mathcal{F}_b^+$.
The conditions of \cite[Chap.~XIV, Thm 2.1 and Thm.~3.1]{CV}) are thus satisfied and the thesis follows.
\end{proof}

\bigskip

\noindent {\bf Acknowledgments.} This work was partially supported
by the Italian MIUR-PRIN Research Project 2008 ``Transizioni di fase,
isteresi e scale multiple''. The first author was also supported by the
FTP7-IDEAS-ERC-StG Grant $\sharp$200497(BioSMA) and
the FP7-IDEAS-ERC-StG Grant \#256872 (EntroPhase).


\begin{thebibliography}{50}


\bibitem{A1} H. Abels, {\itshape On a diffusive interface model for
    two-phase flows of viscous, incompressible fluids with matched
    densities}, Arch. Ration. Mech. Anal.  \textbf{194} (2009),
    463-506.

\bibitem{A2} H. Abels, {\itshape Existence of weak solutions for a
    diffuse
    interface model for viscous, incompressible fluids with general
    densities}, Comm. Math. Phys. \textbf{289} (2009), 45-73.

\bibitem{A3} H. Abels, {\itshape Longtime behavior of
    solutions of a Navier-Stokes/Cahn-Hilliard system}, Proceedings of the
    Conference ``Nonlocal and Abstract Parabolic Equations and their
    Applications'', Bedlewo, Banach Center Publ. \textbf{86} (2009), 9-19.

\bibitem{AF} H. Abels, E. Feireisl, {\itshape On a diffuse interface
    model for a two-phase flow of compressible viscous fluids}, Indiana
    Univ. Math. J. \textbf{57} (2008), 659-698.


\bibitem{AMW} D.M. Anderson, G.B. McFadden, A.A. Wheeler,
    {\itshape Diffuse-interface methods in fluid mechanics},
    Annu. Rev. Fluid Mech. \textbf{30}, Annual Reviews, Palo
    Alto, CA, 1998, 139-165.

\bibitem{BCB} V.E. Badalassi, H.  Ceniceros, S. Banerjee,
    {\itshape Computation of multiphase systems with phase field models},
J. Comput. Phys. \textbf{190} (2003), 371-397.

\bibitem{Ba} J.M. Ball, {\itshape Continuity properties and global
    attractors of generalized semiflows and the Navier-Stokes
    equation}, J. Nonlinear Sci. \textbf{7} (1997), 475-502 (Erratum, J.
    Nonlinear Sci. \textbf{8} (1998), 233).

\bibitem{Ba2} J. M. Ball, {\itshape Global attractors for damped
    semilinear wave equations}, Discrete Contin. Dyn. Syst. \textbf{10} (2004)
    31-52.





\bibitem{BH1} P.W. Bates, J. Han, {\itshape The Neumann
    boundary problem for a nonlocal Cahn-Hilliard equation}, J.
    Differential Equations \textbf{212} (2005), 235-277.

\bibitem{BH2} P.W. Bates, J. Han, {\itshape The Dirichlet
    boundary problem for a nonlocal Cahn-Hilliard equation}, J.
    Math. Anal. Appl. \textbf{311} (2005), 289-312.


\bibitem{B} F. Boyer, {\itshape Mathematical study of multi-phase
    flow under shear through order parameter formulation}, Asymptot.
    Anal. \textbf{20} (1999), 175-212.

\bibitem{B2} F. Boyer, {\it Nonhomogeneous Cahn-Hilliard fluids},
 Ann. Inst. H. Poincar\'e Anal. Non Lin\'eaire   \textbf{18} (2001),
 225-259.

\bibitem{B3} F. Boyer, {\it A theoretical and numerical model for
    the study of incompressible mixture flows},
    Comput. \& Fluids \textbf{31} (2002), 41-68.







\bibitem{CV} V.V. Chepyzhov, M. Vishik, {\itshape Attractors for
    Equations of Mathematical Physics}, Amer. Math. Soc. Colloq. Publ.,
    vol. \textbf{49}, American Mathematical Society, Providence, RI, 2002.

\bibitem{CV2} V.V. Chepyzhov, M. Vishik, {\itshape Evolution equations and their trajectory attractors},
    J. Math. Pures Appl. \textbf{76} (1997), 913-964.

\bibitem{CF} A. Cheskidov, C. Foias, {\itshape On global attractors of the 3D-Navier-Stokes equations},
J. Differential Equations \textbf{231} (2006), 714-754.

\bibitem{CFG} P. Colli, S. Frigeri, M. Grasselli, {\it Global
    existence of weak solutions to a nonlocal
    Cahn-Hilliard-Navier-Stokes system}, submitted.

\bibitem{Cu} N.J. Cutland, {\itshape Global attractors for small samples and germs of 3D
Navier-Stokes equations}, Nonlinear Anal. \textbf{62} (2005), 265-281.

\bibitem{D} M. Doi, {\itshape Dynamics of domains and textures},
    Theoretical Challenges in the Dynamics of Complex Fluids (T.C. McLeish Ed.),
    NATO-ASI Ser. \textbf{339},  Kluwer Academic, Dordrecht, 1997, 293-314.


\bibitem{F} X. Feng, {\itshape Fully discrete finite element
    approximation of the Navier-Stokes-Cahn-Hilliard diffuse interface
    model for two-phase flows}, SIAM J. Numer. Anal.
    \textbf{44} (2006), 1049-1072.

\bibitem{FS} F. Flandoli, B. Schmalfuss, {\itshape Weak solutions and attractors
for the 3-dimensional Navier-Stokes equations with nonregular force,} J.
Dynam. Differential Equations \textbf{11} (1999), 355-398.


\bibitem{G} H. Gajewski, {\itshape On a nonlocal model of
    non-isothermal phase separation}, Adv. Math. Sci. Appl.
    \textbf{12} (2002), 569-586.

\bibitem{GZ} H. Gajewski, K. Zacharias, {\itshape On a nonlocal
    phase separation model}, J. Math. Anal. Appl. \textbf{286} (2003),
    11-31.

\bibitem{GG1} C.G. Gal, M. Grasselli, {\itshape Asymptotic
    behavior of
    a Cahn-Hilliard-Navier-Stokes system in 2D},  Ann.
Inst. H. Poincar\'e Anal. Non Lin\'eaire \textbf{27} (2010), 401-436.

\bibitem{GG2} C.G. Gal, M. Grasselli, {\itshape Trajectory attractors
    for binary fluid mixtures in 3D}, Chinese Ann. Math. Ser. B
    \textbf{31} (2010), 655-678.

\bibitem{GG3} C.G. Gal, M. Grasselli, {\itshape Instability of
    two-phase
    flows: a lower bound on the dimension of the global attractor of the
    Cahn-Hilliard-Navier-Stokes system,} Phys. D \textbf{240} (2011),
    629-635.

\bibitem{GP} M. Grasselli, D. Pra\v z\'ak, {\itshape Longtime behavior of a
diffuse interface model for binary fluid mixtures with shear dependent viscosity,}
submitted.


\bibitem{GL1} G. Giacomin, J.L. Lebowitz, {\itshape Phase
    segregation dynamics in particle systems with long range
    interactions. I. Macroscopic limits}, J. Statist. Phys. \textbf{87}
    (1997), 37-61.

\bibitem{GL2} G. Giacomin, J.L. Lebowitz, {\itshape Phase
    segregation dynamics in particle systems with long range
    interactions. II. Phase motion}, SIAM J. Appl. Math.  \textbf{58}
    (1998), 1707-1729.

\bibitem{GPV} M.E. Gurtin, D. Polignone, J. Vi\~{n}als,
    {\itshape Two-phase binary fluids and immiscible fluids described by
    an order parameter}, Math. Models Meth. Appl. Sci. \textbf{6} (1996),
    8-15.

\bibitem{H} J. Han, {\itshape The Cauchy problem and steady state
    solutions for a nonlocal Cahn-Hilliard equation}, Electron. J. Differential Equations
    \textbf{113} (2004), 9 pp.

\bibitem{Has} B. Haspot, {\itshape Existence of global weak solution for compressible fluid
models with a capillary tensor for discontinuous interfaces},
Differential Integral Equations \textbf{23} (2010), 899-934.

\bibitem{HH} P.C. Hohenberg, B.I. Halperin, {\itshape Theory
    of dynamical critical phenomena}, Rev. Mod. Phys.
    \textbf{49} (1977), 435-479.

\bibitem{JV} D. Jasnow, J. Vi\~{n}als, {\itshape Coarse-grained description of thermo-capillary flow},
Phys. Fluids \textbf{8} (1996), 660-669.

\bibitem{KaVa} A.V. Kapustyan, J. Valero, {\itshape Weak and strong attractors for the
3D Navier-Stokes system},
J. Differential Equations \textbf{240} (2007), 249-278.

\bibitem{KSW} D. Kay, V. Styles, R. Welford, {\itshape Finite
    element approximation of a Cahn-Hilliard-Navier-Stokes system},
    Interfaces Free Bound. \textbf{10} (2008), 5-43.

\bibitem{KKL} J. Kim, K. Kang, J. Lowengrub,
    {\itshape Conservative multigrid methods for Cahn-Hilliard fluids},
    J. Comput. Phys. \textbf{193} (2004), 511-543.

\bibitem{KlVa} P.E. Kloeden, J. Valero, {\itshape The Kneser property of the weak solutions of the three dimensional Navier-Stokes equations}, Discrete Contin. Dyn. Syst. \textbf{28} (2010), 161-179.

\bibitem{LS} C. Liu, J.  Shen, {\itshape A phase field model for the
    mixture of two incompressible fluids and its approximation by a
    Fourier-spectral method}, Phys. D \textbf{179} (2003), 211-228.





\bibitem{LP} S.-O. Londen, H. Petzeltov\'{a}, {\itshape Convergence
    of solutions of a non-local phase-field system}, Discrete Contin. Dyn.
Syst. Ser. S \textbf{4} (2011), 653-670.

\bibitem{LT} J. Lowengrub, L. Truskinovsky, {\itshape
    Quasi-incompressible Cahn-Hilliard fluids and
    topological transitions}, Proc. R. Soc. London A \textbf{454} (1998),
    2617-2654.

\bibitem{MRR} P. Mar\'{\i}n-Rubio, J. Real, {\itshape Pullback
    attractors for 2D-Navier-Stokes equations with delays in continuous and
 sub-linear operators}, Discrete Contin. Dyn. Syst. \textbf{26} (2010),
 989-1006.


\bibitem{M} A. Morro, {\itshape Phase-field models of Cahn-Hilliard
    Fluids and extra fluxes}, Adv. Theor. Appl. Mech. \textbf{3} (2010),
    409-424.

\bibitem{R} C. Rhode, {\itshape On local and non-local
    Navier-Stokes-Korteweg systems for liquid-vapour phase transitions},
    Z. Angew. Math. Mech. \textbf{85} (2005), 839-857.

\bibitem{R2} C. Rhode, {\itshape A Local and Low-Order Navier-Stokes-Korteweg System}, Nonlinear partial differential equations and hyperbolic wave phenomena (H. Holden and K.H. Karlsen Eds.), 315-337, Contemp. Math., \textbf{526}, Amer. Math. Soc., Providence, RI, 2010.

\bibitem{Ro} R.M.S. Rosa, {\itshape Asymptotic regularity conditions for the strong
convergence towards weak limit sets and weak attractors
of the 3D Navier-Stokes equations}, J. Differential Equations \textbf{229} (2006), 257-269.

\bibitem{Se} G.R. Sell, {\itshape Global attractors for the three-dimensional
Navier-Stokes equations,} J. Dynam. Differential Equations \textbf{8} (1996), 1-33.

\bibitem{SY} J. Shen, X. Yiang, {\itshape Energy stable schemes for
    Cahn-Hilliard phase-field model of two-phase incompressible flows},
    Chinese Ann. Math. Ser. B \textbf{31} (2010), 743-758.



\bibitem{S} V.N. Starovoitov, {\itshape The dynamics of a
    two-component fluid in the presence of capillary forces},
    Math. Notes \textbf{62} (1997), 244-254.


\bibitem{T} R. Temam, {\itshape Navier-Stokes equations and
    nonlinear functional analysis}, Second edition, CBMS-NSF Reg. Conf.
    Ser. Appl. Math., \textbf{66}, SIAM, Philadelphia, PA, 1995.

\bibitem{ZWH} L. Zhao, H. Wu, H. Huang, {\itshape Convergence to
    equilibrium for a phase-field model for the
    mixture of two viscous incompressible fluids},
    Commun. Math. Sci. \textbf{7} (2009), 939-962.





\end{thebibliography}
\end{document}